\theoremstyle{plain}
\newtheorem*{wegner}{Wegner's Planar Graph Conjecture}
\newtheorem*{weg-conj}{Wegner's Conjecture}
\newtheorem*{SLCC}{Square List Coloring Conjecture}
\newtheorem*{TLCC}{Total List Coloring Conjecture}
\newtheorem*{L21}{$\bm{L(2,1)}$-Labeling Conjecture}
\theoremstyle{definition}
\theoremstyle{remark}
\newcommand{\fancy}[1]{\mathcal{#1}}
\newcommand{\G}{\fancy{G}}
\newcommand{\chil}{\chi_{\ell}}
\newcommand{\chip}{\chi_{p}}
\newcommand{\chiAT}{\chi_{\textrm{AT}}}
\newcommand{\col}{\chi_{\textrm{col}}}
\newcommand{\chif}{\chi_{f}}
\newcommand{\vph}{\varphi}
\def\floor#1{\lfloor#1\rfloor}
\def\Floor#1{\left\lfloor#1\right\rfloor}
\def\ceil#1{\lceil#1\rceil}
\def\Ceil#1{\left\lceil#1\right\rceil}
\def\chil{\chi_{\ell}}
\def\mad{{\rm mad}}
\def\ad{{\rm ad}}
\def\dist{{\rm dist}}
\def\DeltaG{\Delta}
\def\erdos{Erd\H{o}s}
\def\lovasz{Lov\'{a}sz}
\def\nesetril{Ne\v{s}et\v{r}il}
\def\kral{Kr\'{a}l'}
\newcommand{\GG}[1]{}
\newcommand{\aside}[1]{\marginnote{\scriptsize{#1}}[0cm]}
\newcommand{\aaside}[2]{\marginnote{\scriptsize{#1}}[#2]}
\newcommand\Emph[1]{\emph{#1}\aside{#1}}
\newcommand\EmphE[2]{\emph{#1}\aaside{#1}{#2}}
\author{Daniel W. Cranston\thanks{Department of Mathematics,
William and Mary, Williamsburg, Virginia, USA; \texttt{dcransto@gmail.com}.}
}
\title{Coloring, List Coloring, and Painting Squares of Graphs (and other
related problems)}
\begin{document}
\maketitle
\vspace{-.45in}

\begin{abstract}
We survey work on coloring, list coloring, and painting squares of graphs;
in particular, we consider strong edge-coloring.
We focus primarily on planar graphs and other sparse classes of graphs. 
\end{abstract}
\renewcommand{\baselinestretch}{0.915}\normalsize
\tableofcontents
\renewcommand{\baselinestretch}{1.0}\normalsize

\section{Introduction}
The \Emph{square} $G^2$ of a graph $G$ is formed from $G$ by adding an edge
between each pair of vertices at distance 2\ in $G$.  Over the past 25 years, a
remarkable amount of work has focused on bounding the chromatic number of
squares of graphs.  
This problem was first studied by Kramer and
Kramer~\cite{KramerKramer-old2,KramerKramer-old1}
and much of its current popularity is due to Wegner's conjecture on coloring
squares of planar graphs~\cite{Wegner77}, which we consider in
Section~\ref{sec:wegner}.

The goal of this survey is to present major open questions on
coloring squares, as well as many current best results.  In addition, we
provide history of most problems, often including partial results, as well
as proofs that are particularly enlightening or enjoyable.  Our aim is not
to be exhaustive (or exhausting), but rather to help
the reader get the lay of the land.
In the spirit of open access, in
addition to standard journal citations, whenever possible we also provide links
to freely accessible, often preliminary, versions of the papers. Typically,
these are on preprint servers such as the arXiv.

For many results in this survey, we will not say much about the proofs. 
Likewise, we de-emphasize questions of algorithms and complexity.  So 
it is useful to say a bit now.  

The most common technique for proving coloring bounds is coloring 
greedily in some good vertex order.  More often than not, this order is constructed
using the discharging method.  A standard reference for discharging is
\emph{An Introduction to the Discharging Method via Graph
Coloring}~\cite{CranstonW-guide}, by the author and West.  
Most existence proofs using the discharging method naturally yield  
efficient coloring algorithms.
More details are given in \cite[Section 6]{CranstonK08}.

The second most common
technique employed here is the probabilistic method.  To learn about this
approach, we recommend the excellent monograph \emph{Graph Colouring and the Probabilistic
Method}~\cite{MolloyR-GCPM}, by Molloy and Reed.  At the heart of many
probabilistic coloring proofs is the {\lovasz} Local Lemma (LLL), which was originally
proved non-constructively.  Much work has focused on giving constructive proofs
of LLL (see~\cite{MoserT10} and the references cited there), so now these
probabilistic proofs often also yield efficient algorithms.
For instance, entropy compression is a way to show
that a randomized algorithm runs in expected polynomial time; for example,
see~\cite{Molloy-EC} and~\cite{EP-tutorial}.
\bigskip

%\subsection{Definitions and Notation}
We assume standard graph theory terminology, as in Diestel~\cite{DiestelBook},
West~\cite{West-IGT}, and Bondy \& Murty~\cite{BondyMurty-book}. However,
notation for many types of coloring varies among authors, so we include ours
below.  For completeness, we also include some definitions.
Most of our choices are standard, but we have a few exceptions,
particularly in the penultimate paragraph of this subsection.
The reader should feel free to skip ahead to Section~\ref{gen-graphs:sec} and
only return to this section as needed.

A \EmphE{proper vertex coloring}{-2.5mm} of a graph $G$ assigns to each vertex
$v$ of $G$ a color, such that any two adjacent vertices get
distinct colors.  A \EmphE{$k$-coloring}{3mm} is a proper vertex coloring that uses at
most $k$ colors.  The \emph{chromatic number} \aaside{chromatic number,
$\chi$}{2.5mm}
of $G$, denoted $\chi(G)$, is the
smallest $k$ such that $G$ has a $k$-coloring.  
When our meaning is clear, we often shorten $\chi(G)$ to $\chi$, and similarly
for other parameters, such as $\chil$.
A \EmphE{$k$-list
assignment}{1mm} $L$ for a graph $G$ assigns to each vertex $v$ of $G$ a list of
allowable colors, $L(v)$, such that $|L(v)|=k$ for all $v$.  A \EmphE{proper
$L$-coloring}{0mm} is a proper vertex coloring $\varphi$ of $G$ such that each vertex gets a
color from its list, i.e., $\varphi(v)\in L(v)$ for all $v$.  The \emph{list
chromatic number}\aaside{list chromatic number, $\chil$}{3.5mm}, $\chil(G)$, is the smallest $k$ such that $G$ has a proper
$L$-coloring for every $k$-list assignment $L$.

The \EmphE{$k$-painting game}{5mm} is played between two players, \emph{Lister}
and \emph{Painter}\aaside{Lister, Painter}{14mm}.  Fix a graph $G$.  On each
round, Lister lists some non-empty subset of the unpainted vertices, and Painter
colors (or ``paints'') some independent subset of these.  If Lister
lists any vertex $k$ times without Painter painting it, then Lister wins;
otherwise, Painter wins.  The \EmphE{paint number}{8mm} of $G$, denoted
$\chip(G)$, also called the \emph{online list chromatic number} of $G$, is the
smallest $k$ for which Painter can always win the $k$-painting game on $G$.
The term ``online'' derives from an \emph{online algorithm},
which begins outputting a solution (a list coloring) before the entire
input (the list assignment) is known.
%\newpage

For an orientation $\vec{D}$ of a graph $G$, let $EE(\vec{D})$ (resp.
$EO(\vec{D})$) denote the set of spanning Eulerian subgraphs of $G$ with an
even (resp.\ odd) number of edges.  An \EmphE{Alon--Tarsi orientation}{-9.5mm} of $G$ is
one for which $|EE(\vec{D})|\ne |EO(\vec{D})|$; for convenience, we agree that
$EE$ contains the spanning subgraph with no edges.  The \emph{Alon--Tarsi
number} of $G$, denoted $\chiAT(G)$, \aaside{Alon--Tarsi number, $\chiAT$}{-9.5mm} 
is defined by $\chiAT(G):=1+\min_{\vec{D}}\max_{v\in
V(G)}d^+_{\vec{D}}(v)$, where the minimum is taken over all Alon--Tarsi
orientations of $G$.  The \emph{coloring number}\aaside{coloring number, 
$\col$}{0mm} of $G$, denoted $\col(G)$, is
defined by $\col(G) := 1+\max_{H\subseteq G}\delta(H)$; here $\delta(H)$ is the
minimum degree of $H$.
It is useful to note that every graph $G$ satisfies 
$$\chi(G)\le\chil(G)\le\chip(G)\le\chiAT(G)\le\col(G)\le \Delta(G)+1.$$

The first two inequalities follow directly from the definitions, if the lists
are identical or they are revealed progressively.  The last
inequality is trivial.  Alon and Tarsi~\cite{AlonT} proved that
$\chil(G)\le\chiAT(G)$ and Schauz~\cite{Schauz-AT} strengthened this to
$\chip(G)\le \chiAT(G)$.  We can see that $\chiAT(G)\le \col(G)$ as follows.  
Form the order $v_1,\ldots, v_n$ by starting from $G$ and repeatedly taking
$v_i$ to be a vertex of minimum degree in the remaining subgraph, and then
deleting $v_i$.  Now form $\vec{D}$ by orienting each edge as $\overrightarrow{v_iv_j}$
where $i<j$.  Since this orientation $\vec{D}$ is acyclic, it has
$|EE(\vec{D})|=1$ and $|EO(\vec{D})|=0$.  This proves that $\chiAT(G)\le\col(G)$.
(It has been shown~\cite{ERT,DGK} that for the graph $K_{n,n}$, as $n\to \infty$,
each of the differences $\chil(G)-\chi(G)$, $\chip(G)-\chil(G)$, $\chiAT(G)-\chip(G)$,
$\col(G)-\chiAT(G)$, and $\Delta(G)+1-\col(G)$ grows arbitrarily large; alternatively, 
see~\cite[Section 1.2]{GCM}.)

We denote the chromatic number of the square of $G$ by
$\chi^2(G)$; more generally, $\chi^d(G)$\aside{$\chi^2$, $\chi^d$} denotes the
chromatic number of the $d^{\textrm{th}}$ power of $G$, denoted \Emph{$G^d$}
(which is formed from $G$ by adding an edge between each pair of vertices at
distance no more than $d$).  A \EmphE{strong edge-coloring}{-5.5mm} of
$G$ is a coloring of the
square of the line graph of $G$.  The strong edge-chromatic number is denoted $\chi^s(G)$.
A \EmphE{total coloring}{1mm} of $G$ colors vertices and edges so that elements get
distinct colors whenever they are incident or adjacent.  
The \Emph{total chromatic number} is denoted $\chi''(G)$\aaside{$\chi''$}{11mm}.  
An $L(p,q)$-labeling assigns labels (positive integers) to vertices of $G$ so
that vertices $v$ and $w$ have labels differing by at least $p$ (resp.~at least
$q$) whenever $v$ and $w$ are adjacent (resp.\ distance 2).  The
\EmphE{span}{8mm} of an $L(p,q)$-labeling is the difference between the largest
and smallest labels, and the $(p,q)$-span of a
graph $G$ is the minimum span of an $L(p,q)$-labeling of $G$; this is denoted
$\lambda^{p,q}(G)$\aaside{$\lambda^{p,q}$}{5mm}.  For most of these parameters, it
makes sense to define analogues for list coloring, paint number, Alon--Tarsi
number, and coloring number.  For $G^2$, we denote these analogues as $\chi^2(G)$,
$\chil^2(G)$, $\chip^2(G)$, $\chiAT^2(G)$, and $\col^2(G)$.  
Our focus in this survey, which reflects the focus in the literature, is on
$\chi^2(G)$ and $\chil^2(G)$.  However, often proofs of bounds for these parameters
actually prove the same bound for $\col^2(G)$, which we mention when applicable.

For a graph $G$, we write $\Delta(G)$, $\omega(G)$, and $g(G)$ for its maximum
degree, its clique number\aside{$\Delta$, $\omega$, $g$}, and its girth
(length of its shortest cycle).  When $G$ is clear from context,
we often shorten these to $\Delta$, $\omega$, and $g$.
The \Emph{maximum average degree}, $\mad(G)$\aaside{$\mad$}{11mm}, of a graph $G$ is
$\max_{H\subseteq G,|V(H)|\ge 1}\frac{2|E(H)|}{|V(H)|}$.  When $G$ is planar, 
Euler's formula gives $\mad(G)<\frac{2g}{g-2}$; see Lemma~\ref{lem:folklore}.  
As we will see in Section~\ref{sec:mad},
many results initially proved for planar graphs with sufficiently large girth
actually hold for the larger class of graphs with bounded maximum average degree.

\section{Graphs in General: Wegner's Conjecture}
\label{gen-graphs:sec}
For every graph $G$, clearly $\chi(G)\ge\omega(G)$.  In
general, this bound can be quite bad.  Mycielski constructed%
\footnote{Given a graph $G$ with
$V(G)=\{v_1,\ldots,v_n\}$, we define the Mycielskian $M(G)$ as follows.  Let
$V(M(G)):=\{v_1,\ldots,v_n,w_1,\ldots,w_n,x\}$.  Let
$E(M(G)):=E(G)\cup\{v_iw_j~|~v_iv_j\in E(G)\}\cup\{w_ix~|~i\in\{1,\ldots,n\}\}$.
It is easy to see that $\chi(M(G))=\chi(G)+1$ and that if $G$ is
triangle-free, then so is $M(G)$.} 
triangle-free graphs with arbitrarily high chromatic number,
and \erdos\ proved the existence of
a graph with both chromatic number and girth arbitrarily high.
However, both constructions have maximum degree much higher than 
chromatic number.  In contrast, for a given maximum degree, the graph with
maximum chromatic number is the complete graph. Wegner~\cite{Wegner77} believed
that something similar is true when we consider graphs raised
to a fixed power.

\begin{weg-conj}[\cite{Wegner77}] %p. 8 in his preprint
\label{wegner-conj}
For all integers $k\ge 1$ and $D\ge 3$, let $\chi^k(D)$ and $\omega^k(D)$
denote, respectively, the maximums over all graphs $G$ with $\Delta\le D$ of
$\chi(G^k)$ and $\omega(G^k)$.  For all $k$ and $D$ we have
$\chi^k(D)=\omega^k(D)$.
\end{weg-conj}

This conjecture is remarkably wide-ranging.  Wegner writes: ``one cannot expect
a general answer, but it would be interesting to settle some cases.''  
The restriction to $D\ge 3$ is because the case $D=1$ is trivial, and the case
$D=2$ is quite easy.  (At the end of Section~\ref{sec:planar-girth}, we
determine $\chi^2(C_n)$, $\chil^2(C_n)$, and $\chiAT^2(C_n)$ for each cycle $C_n$.)
The case $k=1$ is also immediate, by considering greedy
coloring and the complete graph $K_{D+1}$.  So 
we start with $k=2$ and $D\ge 3$.

Greedy coloring shows that $\chi(G)\le\Delta+1$ for all $G$.  Since $G^2$ has
maximum degree at most $\Delta^2$, we immediately have $\chi^2(G)\le
\Delta^2+1$.  Applying Brooks' Theorem to $G^2$ shows that equality holds for a
connected graph $G$ only if $G^2=K_{\Delta^2+1}$.  Hoffman and
Singleton~\cite{HoffmanS60} famously used
linear algebra to show this is possible only if $\Delta\in\{2,3,7,57\}$.  
This proof is also presented in~\cite[Chapter~14]{33miniatures}.
The unique realizations for $\Delta=2$ and $\Delta=3$ are the 5-cycle and the
Petersen graph.  For $\Delta=7$, the only realization is the Hoffman--Singleton
graph~\cite{HoffmanS60}.  For $\Delta=57$, the question of whether any
realization exists remains a major open problem.  ({Makhnev~\cite{makhnev}
claimed to disprove the existence of any realization when $\Delta=57$. 
However, Faber~\cite{faber} claimed to refute this proof.})

\erdos, Fajtlowicz, and Hoffman~\cite{ErdosFH80} used the same approach to show
that when $\Delta\ge 3$, no graph $G$ has $G^2=K_{\Delta^2}$. 
Elspas~\cite{Elspas64} constructed graphs $G$ for each $\Delta\in\{4,5\}$ such that
$G^2=K_{\Delta^2-1}$; see Figure~\ref{Elspas-fig}.  Thus, to prove Wegner's
Conjecture for $k=2$ and $\Delta\in\{4,5\}$, it suffices to prove that
$\chi^2(G)\le\Delta^2-1$ whenever $\Delta\in\{4,5\}$.  As a first step, it is
useful to prove that $\omega(G^2)\le\Delta^2-1$.
Fortunately, the above result of \erdos\ et~al.\ yields the following lemma as
an easy corollary; this was first noted in~\cite{CranstonK08}.

\begin{lemma}[\cite{CranstonK08}]
\label{lem:no-big-cliques}
If $G$ is connected, $\Delta\ge 3$, and $G^2\ne K_{\Delta^2+1}$, then
$\omega(G^2)\le \Delta^2-1$.
\end{lemma}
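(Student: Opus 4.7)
The plan is a proof by contradiction: assume $\omega(G^2)\ge\Delta^2$ and derive a contradiction. Let $K$ be a maximum $G^2$-clique. Since $\Delta(G^2)\le\Delta+\Delta(\Delta-1)=\Delta^2$, we have $|K|\le\Delta^2+1$. If $|K|=\Delta^2+1$, each $v\in K$ saturates its $G^2$-degree with $N_{G^2}(v)=K\setminus\{v\}$, so $N_G(v)\subseteq K$; connectedness then forces $V(G)=K$ and so $G^2=K_{\Delta^2+1}$, contradicting the hypothesis. If $|K|=\Delta^2$ and no vertex of $K$ has a $G^2$-neighbor outside $K$, the same closure argument gives $V(G)=K$ and $G^2=K_{\Delta^2}$, contradicting the {\erdos}--Fajtlowicz--Hoffman result cited above.

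This leaves the case $|K|=\Delta^2$ with some $v_0\in K$ having a $G^2$-neighbor $u\notin K$; without loss of generality $uv_0\in E(G)$ (otherwise replace $v_0$ by a common $G$-neighbor of $u$ and $v_0$, which must lie in $K$ because $N_{G^2}(v_0)\subseteq K\cup\{u\}$). Then $v_0$ is $G^2$-adjacent to all of $(K\setminus\{v_0\})\cup\{u\}$, so $d_{G^2}(v_0)=\Delta^2$, forcing the tight ``Moore'' local structure at $v_0$: $d_G(v_0)=\Delta$, every $G$-neighbor of $v_0$ has $G$-degree $\Delta$, these neighbors are pairwise $G$-nonadjacent, and no two of them share a common $G$-neighbor other than $v_0$. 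In particular $d_G(u)=\Delta$, and the other $G$-neighbors $y_1,\ldots,y_{\Delta-1}$ of $u$ are distance-$2$ vertices of $v_0$, so they lie in $K$. Each $y_i$ is itself ``full'' (it has $u\notin K$ as a $G^2$-neighbor), so the Moore structure propagates and gives $N_G(y_i)\subseteq K\cup\{u\}$. Since every $y\in N_G(u)$ has $N_G(y)\subseteq K\cup\{u\}$, we obtain $N_2(u)\subseteq K$ and hence $N_{G^2}(u)\subseteq K$; then $\{u\}\cup N_{G^2}(u)$ is a clique in $G^2$, forcing $|N_{G^2}(u)|\le\Delta^2-1$.

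The crux — and the main obstacle — is to rule out the only two obstructions that could keep $u$ itself from achieving $d_{G^2}(u)=\Delta^2$: (i) two $G$-neighbors of $u$ are $G$-adjacent, or (ii) two of them share a common $G$-neighbor $w\ne u$. Case (i) dies immediately by Moore at the offending $y_i$, since $u\in N_G(y_i)$ prevents any other $G$-neighbor of $y_i$ from being $G$-adjacent to $u$. For case (ii) the plan is to locate $w$ via the Moore tree at $v_0$: $w$ cannot be $v_0$, any $G$-neighbor of $v_0$ in $K$, nor any $y_\ell$, so $w$ must be a grandchild of $v_0$ lying in $K$. Then $w$ is at $G$-distance $2$ from $u$ through the relevant $y_i$, so $w\in N_{G^2}(u)$ and $w$ is full itself; applying Moore at $w$ to its two $G$-neighbors among $\{y_1,\ldots,y_{\Delta-1}\}$ forbids them from also sharing $u$ as a common $G$-neighbor, contradiction. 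With both obstructions eliminated, $d_{G^2}(u)=\Delta^2$ and $\{u\}\cup K$ is a $G^2$-clique of size $\Delta^2+1$, contradicting the maximality of $K$.
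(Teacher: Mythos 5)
Your argument is correct, and its skeleton matches the paper's: assume $\omega(G^2)\ge\Delta^2$, combine the Erd\H{o}s--Fajtlowicz--Hoffman result with the hypothesis $G^2\ne K_{\Delta^2+1}$ to conclude that a maximum clique is a proper subset of $V(G)$, use connectedness to find a clique vertex with a $G^2$-neighbor $u$ outside, and show that $u$ can be absorbed into the clique. Where you diverge is in how that final absorption is justified. The paper propagates the observation that $S$ contains every $G^2$-neighbor of $x$ other than $w$, first for $x=v$ and then for each $G$-neighbor $x$ of $w$, obtaining $N_{G^2}(w)\subseteq S$ and immediately declaring $S\cup\{w\}$ a clique. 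You instead establish the reverse inclusion explicitly: each clique vertex with a $G^2$-neighbor outside $K$ has $G^2$-degree exactly $\Delta^2$ and hence the tight ``Moore'' local structure, and ruling out the two possible collisions among the $G$-neighbors of $u$ forces $d_{G^2}(u)=\Delta^2$, so $N_{G^2}(u)=K$ by cardinality and $K\cup\{u\}$ is a larger clique. Your version is longer, but the extra work is not wasted: the step from $N_{G^2}(w)\subseteq S$ to ``$S\cup\{w\}$ is a clique'' requires precisely the containment $S\subseteq N_{G^2}(w)$ that the published write-up compresses, and your cases (i) and (ii) --- including the sub-case where the shared second neighbor involves $v_0$, which dies by the Moore condition at $v_0$ --- are exactly the content of that implicit step. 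Both routes buy the same theorem; the paper's is shorter to state, while yours makes the degree-saturation mechanism (and hence why the clique really does extend) fully visible.
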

\begin{proof}
Suppose to the contrary that $G^2\ne K_{\Delta^2+1}$ and $\omega(G^2)\ge \Delta^2$.
Let $S$ be a maximum clique in $G^2$. \erdos\
et~al.~\cite{ErdosFH80} showed that $S$ must be a proper subset of $V(G)$.  Since
$G$ is connected, there exist adjacent vertices $v$ and $w$ with $v\in S$ and
$w\notin S$.  Note that $d_{G^2}(v)\le \Delta^2$.  Since $|S|\ge \Delta^2$ and
$w\notin S$, the set $S$ contains every neighbor of $v$ in $G^2$ other than
$w$.  In particular, $S$ contains every neighbor of $w$ in $G$.  Now repeating
this argument for each neighbor of $w$ in $G$, we conclude that $S$ contains
every vertex at distance at most 2 from $w$ in $G$, i.e., every neighbor of $w$
in $G^2$.  Thus, $S\cup\{w\}$ is a clique in $G^2$ of size
$|S|+1$, which contradicts our choice of $S$ as maximum.
\end{proof}

\begin{figure}[!t]
\centering

\begin{tikzpicture}[scale = 3.8]
\tikzstyle{VertexStyle}=[shape = circle,	
minimum size = 5pt, inner sep = 2pt,
                                 draw]
\Vertex[x = 1.5, y = -0.0499999523162842, L = \tiny {}]{v0}
\Vertex[x = 1.60000002384186, y = 0.100000023841858, L = \tiny {}]{v1}
\Vertex[x = 1.35000002384186, y = 0.199999988079071, L = \tiny {}]{v2}
\Vertex[x = 1.54999995231628, y = 0.199999988079071, L = \tiny {}]{v3}
\Vertex[x = 1.5, y = 0.449999988079071, L = \tiny {}]{v4}
\Vertex[x = 1.60000002384186, y = 0.300000011920929, L = \tiny {}]{v5}
\Vertex[x = 1.79999995231628, y = 0.449999988079071, L = \tiny {}]{v6}
\Vertex[x = 1.70000004768372, y = 0.300000011920929, L = \tiny {}]{v7}
\Vertex[x = 1.95000004768372, y = 0.199999988079071, L = \tiny {}]{v8}
\Vertex[x = 1.75, y = 0.199999988079071, L = \tiny {}]{v9}
\Vertex[x = 1.79999995231628, y = -0.0499999523162842, L = \tiny {}]{v10}
\Vertex[x = 1.70000004768372, y = 0.100000023841858, L = \tiny {}]{v11}
\Vertex[x = 2.25, y = -0.149999976158142, L = \tiny {}]{v12}
\Vertex[x = 1.04999995231628, y = -0.149999976158142, L = \tiny {}]{v13}
\Vertex[x = 1.64999997615814, y = 0.849999994039536, L = \tiny {}]{v14}
\Edge[](v0)(v1)
\Edge[](v0)(v3)
\Edge[](v0)(v11)
\Edge[](v0)(v12)
\Edge[](v1)(v2)
\Edge[](v1)(v7)
\Edge[](v1)(v10)
\Edge[](v2)(v3)
\Edge[](v2)(v5)
\Edge[](v2)(v14)
\Edge[](v3)(v4)
\Edge[](v3)(v9)
\Edge[](v4)(v5)
\Edge[](v4)(v7)
\Edge[](v4)(v13)
\Edge[](v5)(v6)
\Edge[](v5)(v11)
\Edge[](v6)(v7)
\Edge[](v6)(v9)
\Edge[](v6)(v12)
\Edge[](v7)(v8)
\Edge[](v8)(v9)
\Edge[](v8)(v11)
\Edge[](v8)(v14)
\Edge[](v9)(v10)
\Edge[](v10)(v11)
\Edge[](v10)(v13)
\Edge[](v12)(v13)
\Edge[](v12)(v14)
\Edge[](v13)(v14)
\end{tikzpicture}
\hspace{.5in}
\begin{tikzpicture}[scale = 4]
\tikzstyle{VertexStyle}=[shape = circle, minimum size = 5pt, inner sep = 2pt, draw]
%\begin{scope}[xshift=4in]
\Vertex[x = 0.800000011920929, y = 0.75, L = \tiny {}]{v0}
\Vertex[x = 0.800000011920929, y = 0.149999976158142, L = \tiny {}]{v1}
\Vertex[x = 1.79999995231628, y = 0.75, L = \tiny {}]{v2}
\Vertex[x = 1.79999995231628, y = 0.149999976158142, L = \tiny {}]{v3}
\Vertex[x = 1.29999995231628, y = 0.149999976158142, L = \tiny {}]{v4}
\Vertex[x = 1.29999995231628, y = 0.75, L = \tiny {}]{v5}
\Vertex[x = 0.649999976158142, y = 0.399999976158142, L = \tiny {}]{v6}
\Vertex[x = 0.949999988079071, y = 0.399999976158142, L = \tiny {}]{v7}
\Vertex[x = 0.800000011920929, y = 0.399999976158142, L = \tiny {}]{v8}
\Vertex[x = 1.95000004768372, y = 0.399999976158142, L = \tiny {}]{v9}
\Vertex[x = 1.79999995231628, y = 0.399999976158142, L = \tiny {}]{v10}
\Vertex[x = 1.64999997615814, y = 0.399999976158142, L = \tiny {}]{v11}
\Vertex[x = 1.45000004768372, y = 0.399999976158142, L = \tiny {}]{v12}
\Vertex[x = 1.14999997615814, y = 0.399999976158142, L = \tiny {}]{v13}
\Vertex[x = 1.29999995231628, y = 0.399999976158142, L = \tiny {}]{v14}
\Vertex[x = 0.649999976158142, y = -0.25, L = \tiny {}]{v15}
\Vertex[x = 0.949999988079071, y = -0.25, L = \tiny {}]{v16}
\Vertex[x = 0.800000011920929, y = 0, L = \tiny {}]{v17}
\Vertex[x = 1.29999995231628, y = 0, L = \tiny {}]{v18}
\Vertex[x = 1.45000004768372, y = -0.25, L = \tiny {}]{v19}
\Vertex[x = 1.14999997615814, y = -0.25, L = \tiny {}]{v20}
\Vertex[x = 1.64999997615814, y = -0.25, L = \tiny {}]{v21}
\Vertex[x = 1.95000004768372, y = -0.25, L = \tiny {}]{v22}
\Vertex[x = 1.79999995231628, y = 0, L = \tiny {}]{v23}
\Edge[](v0)(v3)
\Edge[style = {bend left}](v0)(v5)
\Edge[style = {ultra thick}](v0)(v6)
\Edge[style = {ultra thick}](v0)(v7)
\Edge[style = {ultra thick}](v0)(v8)
\Edge[](v1)(v2)
\Edge[style = {bend right}](v1)(v4)
\Edge[style = {ultra thick}](v1)(v6)
\Edge[style = {ultra thick}](v1)(v7)
\Edge[style = {ultra thick}](v1)(v8)
\Edge[style = {bend right}](v2)(v5)
\Edge[style = {ultra thick}](v2)(v9)
\Edge[style = {ultra thick}](v2)(v10)
\Edge[style = {ultra thick}](v2)(v11)
\Edge[style = {bend left}](v3)(v4)
\Edge[style = {ultra thick}](v3)(v9)
\Edge[style = {ultra thick}](v3)(v10)
\Edge[style = {ultra thick}](v3)(v11)
\Edge[style = {ultra thick}](v4)(v12)
\Edge[style = {ultra thick}](v4)(v13)
\Edge[style = {ultra thick}](v4)(v14)
\Edge[style = {ultra thick}](v5)(v12)
\Edge[style = {ultra thick}](v5)(v13)
\Edge[style = {ultra thick}](v5)(v14)
\Edge[style = {ultra thick}](v6)(v15)
\Edge[](v6)(v19)
\Edge[](v6)(v22)
\Edge[style = {ultra thick}](v7)(v16)
\Edge[](v7)(v20)
\Edge[](v7)(v23)
\Edge[style = {ultra thick, bend right}](v8)(v17)
\Edge[](v8)(v18)
\Edge[](v8)(v21)
\Edge[](v9)(v16)
\Edge[](v9)(v18)
\Edge[style = {ultra thick}](v9)(v22)
\Edge[](v10)(v17)
\Edge[](v10)(v19)
\Edge[style = {ultra thick, bend right}](v10)(v23)
\Edge[](v11)(v15)
\Edge[](v11)(v20)
\Edge[style = {ultra thick}](v11)(v21)
\Edge[](v12)(v16)
\Edge[style = {ultra thick}](v12)(v19)
\Edge[](v12)(v21)
\Edge[](v13)(v17)
\Edge[style = {ultra thick}](v13)(v20)
\Edge[](v13)(v22)
\Edge[](v14)(v15)
\Edge[style = {ultra thick, bend right}](v14)(v18)
\Edge[](v14)(v23)
\Edge[style = {ultra thick}](v15)(v16)
\Edge[style = {ultra thick}](v15)(v17)
\Edge[style = {ultra thick}](v16)(v17)
\Edge[style = {ultra thick}](v18)(v19)
\Edge[style = {ultra thick}](v18)(v20)
\Edge[style = {ultra thick}](v19)(v20)
\Edge[style = {ultra thick}](v21)(v22)
\Edge[style = {ultra thick}](v21)(v23)
\Edge[style = {ultra thick}](v22)(v23)
\end{tikzpicture}

\caption{Graphs with $\Delta=4$ and $\Delta=5$ that have squares
$K_{4^2-1}$ and $K_{5^2-1}$.\label{Elspas-fig}}
\end{figure}

Now we want to show that $G^2\ne K_{\Delta^2+1}$ implies the stronger result
$\chi^2(G)\le \Delta^2-1$. 
The first work in this direction is by Cranston and
Kim~\cite{CranstonK08}; they showed that if $\Delta=3$, $G$ is connected, and
$G$ is not the Petersen graph, then $\chil^2(G)\le\Delta^2-1=8$.  They also
conjectured that $\chil^2(G)\le\Delta^2-1$ for every connected graph $G$ such
that $\Delta\ge 3$ and $G^2\ne K_{\Delta^2+1}$.  Cranston and
Rabern~\cite{CranstonR13paint}
proved this bound in the more general setting of Alon--Tarsi number.

\begin{theorem} [\cite{CranstonR13paint}]
\label{CRpaint}
If $\Delta\ge 3$ and $G$ is a connected graph other than the Petersen graph,
the Hoffman--Singleton graph, or a graph with $\Delta=57$ and $G^2=K_{57^2+1}$,
then $\chiAT^2(G)\le\Delta^2-1$.
\end{theorem}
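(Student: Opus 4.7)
The plan is to build, for each eligible $G$, an Alon--Tarsi orientation $\vec D$ of $H := G^2$ whose maximum out-degree is at most $\Delta^2-2$, which will then give $\chiAT(H) \le \Delta^2-1$. Lemma~\ref{lem:no-big-cliques} is the engine: the hypothesis that $G$ is not the Petersen graph, the Hoffman--Singleton graph, or a graph with $\Delta=57$ and $G^2=K_{57^2+1}$ is exactly what ensures $G^2 \ne K_{\Delta^2+1}$, so the lemma gives $\omega(H) \le \Delta^2-1$. Combined with the trivial $\Delta(H) \le \Delta^2$, I would split the argument on the value of $\Delta(H)$.

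If $\Delta(H) \le \Delta^2-1$, an Alon--Tarsi analogue of Brooks' theorem finishes things: one roots a DFS-tree at a vertex with two nonadjacent neighbors and orients edges along the tree, producing a single back-edge whose induced cycle can be arranged to make $|EE(\vec D)| \ne |EO(\vec D)|$. The only obstructions to such a Brooks-type bound are $K_{\Delta(H)+1}$ and odd cycles, both excluded here since $\omega(H) \le \Delta^2-1$ and $\Delta^2-1 \ge 8$. The substantive case is $\Delta(H) = \Delta^2$. Then there must exist $v \in V(G)$ with $d_G(v) = \Delta$, every neighbor of $v$ also of degree $\Delta$, and all second-neighborhoods around $v$ pairwise disjoint except at $v$. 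Since $\omega(H) \le \Delta^2-1$, the set $N_H(v)$ is not a clique and contains a nonedge $xy$. My construction of $\vec D$ proceeds in two phases. First, take a smallest-last order on $V(H) \setminus \{v\}$, placing $x$ and $y$ immediately before $v$, and orient all edges forward; every vertex other than $v$ then has out-degree at most $\Delta^2-1$. Second, orient $v$'s edges outward and then reverse one carefully chosen edge $vz$ with $z \in N_H(v) \setminus \{x,y\}$ to bring $v$'s out-degree down to $\Delta^2-2$.

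The main obstacle is the Alon--Tarsi accounting after this reversal. Before reversing, the orientation is acyclic, so $|EE|=1$ and $|EO|=0$; after reversing, new directed cycles through $v$ and $z$ appear and must be tracked. In the saturated local configuration around $v$ these cycles are highly symmetric and the signed count $|EE| - |EO|$ can, in principle, collapse to zero. The delicate step, and where I expect the bulk of the work to lie, is to show that the choice of $z$ (or, failing that, of further reversals inside $N_H(v)$ using the non-edge $xy$) can always be made so that $|EE| - |EO| \ne 0$, and that the only obstructions are precisely the three exceptional graphs excluded from the theorem. The cleanest route, I suspect, is to reduce to an Alon--Tarsi analysis of the induced subgraph $H[N_H(v)]$: it has at most $\Delta^2$ vertices and is not a clique, so a smaller instance of the Brooks-type bound should supply an Alon--Tarsi orientation there with controlled out-degrees, which can then be patched to the orientation of $H$ produced above.
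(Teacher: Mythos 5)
Your reduction to Lemma~\ref{lem:no-big-cliques} and your dispatch of the case $\Delta(G^2)\le\Delta^2-1$ via a Brooks-type Alon--Tarsi bound are fine (that bound is exactly \cite{HladkyKS10}, and the complete-graph and odd-cycle obstructions are indeed excluded). But the main case contains an off-by-one error that sinks the construction as described. To conclude $\chiAT^2(G)\le\Delta^2-1$ you need an Alon--Tarsi orientation with \emph{every} out-degree at most $\Delta^2-2$, yet your phase-one orientation only guarantees out-degree at most $\Delta^2-1$ for the vertices other than $v$; your phase two then lowers the out-degree of the single vertex $v$ to $\Delta^2-2$ and leaves everyone else alone. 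The resulting orientation certifies only $\chiAT^2(G)\le\Delta^2$, which is the trivial bound. Since $G^2$ may contain many vertices of degree $\Delta^2$ (and, more to the point, many vertices whose back-degree in a degeneracy order is $\Delta^2-1$), a local repair at one vertex cannot suffice: you must arrange for \emph{every} vertex outside some small kernel to have at least two out-neighbors withheld. In addition, the one step you identify as ``where the bulk of the work lies'' --- showing $|EE(\vec D)|\ne|EO(\vec D)|$ after reversing edges --- is precisely the substantive content of the theorem and is left unresolved, so even for the single vertex $v$ the argument is not complete.

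The paper's proof fixes the global problem differently, following the Erd\H{o}s--Rubin--Taylor template: it chooses a connected subgraph $H$ of $G$ (for large girth, the square of a shortest cycle, possibly with one or two pendant edges) and orders the vertices of $G^2$ by \emph{decreasing distance in $G$ from $H$}. Every vertex outside $H$ then automatically has at least two neighbors in $G^2$ that come later (closer to $H$), so its out-degree drops to $\Delta^2-2$ uniformly, with no per-vertex surgery; the Alon--Tarsi difficulty is then confined to the small subgraph $H$, which is chosen to be degree-AT (the analogue of degree-choosable). The hypothesis $G^2\ne K_{\Delta^2+1}$ is used in the delicate girth-$5$ case of selecting $H$, not merely to invoke Lemma~\ref{lem:no-big-cliques}. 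If you want to salvage your outline, replace the smallest-last order by a distance order from a suitable $H$ and prove that your candidate $H$ admits an Alon--Tarsi orientation with $d^+_{\vec D}(u)\le d_H(u)-1$ for all $u\in V(H)$; that is the lemma your sketch is missing.
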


\begin{proof}[Proof Sketch.]
The idea of the proof is straightforward; for simplicity
we focus on list coloring, although the extension to Alon--Tarsi number follows
the same approach.  Recall the proof of Brooks' Theorem for list
coloring, due to \erdos, Rubin, and Taylor~\cite{ERT}.  They choose some
connected subgraph $H$ and greedily color the vertices in order of decreasing
distance from $H$.  At the time that each vertex $v$ outside of $H$ is colored,
it has some uncolored neighbor closer to $H$, so $v$ can be colored from a list
of size $\Delta$.  Thus, it suffices to pick $H$ that is degree-choosable,
i.e., it can be list colored whenever each vertex $v$ of $H$ has a list of size
$d_H(v)$.  So the main work in their proof consists in showing that each
2-connected graph (that is neither a complete graph nor an odd cycle) contains
such a subgraph $H$.

The proof of Theorem~\ref{CRpaint} follows the same outline.  Again, we choose
some subgraph $H$ of $G$ and color the vertices of $G^2$ in order of decreasing
distance in $G$ from $H$.  Now each vertex $v$ has at least \emph{two}
uncolored neighbors when it is colored, so lists of size $\Delta^2-1$ suffice.
In~\cite{ERT} the choice of $H$ is always an induced even cycle with at
most one chord.  Here we choose $H$ similarly; when $G$ has sufficiently high
girth, $H$ is the square of a shortest cycle in $G$, possibly with one or two
pendant edges.  The case of smaller girth is more detailed.  In
particular, girth 5 requires careful analysis, since it 
explicitly uses that $G^2\ne K_{\Delta^2+1}$.  
\end{proof}

Now we continue the task of proving Wegner's Conjecture for $k=2$ and
specific values of $D$.  The Petersen graph and the Hoffman--Singleton graph,
together with greedy coloring, prove the conjecture for $D\in\{3,7\}$.
For all other $D$, besides possibly 57, Theorem~\ref{CRpaint} gives an upper
bound.  The constructions of Elspas~\cite{Elspas64} for $D=4$ and $D=5$ give
matching lower bounds.  Thus, for $k=2$, Wegner's Conjecture is true for all
$D\in\{3,4,5,7\}$.  The connected graphs achieving the upper upper bound are
unique for $D\in\{3,7\}$, as proved in~\cite{HoffmanS60}; and also for  
$D\in\{4,5\}$, and the graphs shown in Figure~\ref{Elspas-fig}, as proved
in~\cite{NguyenM}.
For easy reference, we present these results in the following theorem.

\begin{theorem}
Wegner's Conjecture is trivial for $k=1$, as shown by the
complete graph $K_{D+1}$.  For $k=2$ it is true for $D\in \{3,4,5,7\}$. 
(As far as we know, it is open for all other pairs $(k,D)$.)
\end{theorem}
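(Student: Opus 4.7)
The plan is to assemble the theorem from pieces that are already developed in the surrounding text; essentially the proof is a bookkeeping exercise built on Lemma~\ref{lem:no-big-cliques}, Theorem~\ref{CRpaint}, and the Elspas/Petersen/Hoffman--Singleton constructions.

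First I would dispose of $k=1$. Greedy coloring gives $\chi(G) \le \Delta(G)+1 \le D+1$ whenever $\Delta(G) \le D$, so $\chi^1(D) \le D+1$. On the other hand, $K_{D+1}$ has $\Delta = D$ and $\omega = D+1$, so $\omega^1(D) \ge D+1$. Since $\omega^1(D) \le \chi^1(D)$ trivially, all three quantities equal $D+1$.

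Next I would handle $k=2$ with $D \in \{3,7\}$. For these two values there exist connected graphs $G$ with $\Delta = D$ and $G^2 = K_{\Delta^2+1}$, namely the Petersen graph and the Hoffman--Singleton graph. For such a $G$ we have $\omega(G^2) = D^2+1$, so $\omega^2(D) \ge D^2+1$; combined with the trivial greedy bound $\chi^2(G) \le \Delta^2+1$, we conclude $\chi^2(D) = \omega^2(D) = D^2+1$.

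The main content lies in $k=2$ with $D \in \{4,5\}$. The Elspas constructions shown in Figure~\ref{Elspas-fig} provide, for each such $D$, a connected graph $G$ with $\Delta = D$ and $G^2 = K_{D^2-1}$, which gives the lower bound $\omega^2(D) \ge D^2-1$. For the matching upper bound on $\chi^2(D)$, I would argue component by component: any connected graph $H$ with $\Delta(H) \le D$ and $D \in \{4,5\}$ satisfies $H^2 \ne K_{\Delta(H)^2+1}$ by the Hoffman--Singleton classification (the only admissible $\Delta$ values are $2,3,7,57$, none of which lie in $\{4,5\}$), and $H$ is neither the Petersen graph, the Hoffman--Singleton graph, nor a hypothetical $\Delta=57$ realization; hence Theorem~\ref{CRpaint} gives $\chi^2(H) \le \chiAT^2(H) \le \Delta(H)^2-1 \le D^2-1$. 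Taking the maximum over components yields $\chi^2(D) \le D^2-1$, and combined with $\omega^2(D) \le \chi^2(D)$ the lower bound from Elspas pins down $\chi^2(D) = \omega^2(D) = D^2-1$.

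There is essentially no obstacle here, since the hard existence and non-existence facts have already been imported; the only mild subtlety is the reduction to connected graphs in the case $D \in \{4,5\}$, where one must observe that $\chi^2$ and $\omega^2$ of a disconnected graph are the maxima of the corresponding parameters over its components, so that Theorem~\ref{CRpaint} (stated for connected $G$) still delivers the desired bound for the extremal functions $\chi^2(D)$ and $\omega^2(D)$.
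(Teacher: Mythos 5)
Your argument is essentially the paper's own: $k=1$ by greedy coloring and $K_{D+1}$; $D\in\{3,7\}$ by the Petersen and Hoffman--Singleton graphs against the trivial bound $\chi(G^2)\le\Delta^2+1$; and $D\in\{4,5\}$ by pairing the Elspas constructions with Theorem~\ref{CRpaint}. The only flaw is in your component-wise reduction for $D\in\{4,5\}$: you assert that every connected $H$ with $\Delta(H)\le D$ satisfies $H^2\ne K_{\Delta(H)^2+1}$ and is not the Petersen graph, justified by ``$2,3,7,57\notin\{4,5\}$'' --- but this only rules out components with $\Delta(H)=D$. A component could be $C_5$ (where $\Delta(H)=2$, so Theorem~\ref{CRpaint} does not apply at all) or the Petersen graph (an explicit exception to Theorem~\ref{CRpaint}), and for these your chain of inequalities breaks. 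The conclusion survives because any such exceptional component has $\chi(H^2)\le\Delta(H)^2+1\le 10\le D^2-1$ for $D\ge 4$, but that one-line patch needs to be said; as written the claim about components is false.
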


We might hope to find similar constructions for larger $D$.  But this seems
unlikely.  Conde and Gimbert~\cite{CondeG09} showed that for each $D$ with $6\le D\le
49$, there exists no graph $G$ with $\Delta=D$ and $G^2=K_{\Delta^2-1}$.  Miller,
Nguyen, and Pineda--Villavicencio~\cite{MillerNP-V09} conjectured this 
for all $D\ge 6$, and proved it for various cases.  Miller and
\v{S}ir\'{a}n~\cite[p.~13]{MillerSDynamic} summarized~these~results.

Relatively little is known about lower bounds for $\omega^2(D)$.
Recall that for each prime power $q$, there exists a projective plane
$\mathcal{P}$ of order $q$.  Brown~\cite{Brown66} considered the bipartite
incidence graph $G$ of $\mathcal{P}$, which has as its two parts the $q^2+q+1$
points and $q^2+q+1$
lines of $\mathcal{P}$.  Since every pair of lines intersect in a common point,
in $G^2$ the lines form a clique of size $q^2+q+1$ (similarly for the points).
Since $G$ is $(q+1)$-regular, $\omega(G^2)=q^2+q+1=\Delta^2-\Delta+1$.  
When $\Delta-1$ is not a power of a prime,
the best bounds known still come from this construction, together with the
fact~\cite{PrimeGaps01} that for $\Delta$ sufficiently large there always
exists a prime $p$ with $\Delta\ge p\ge \Delta-\Delta^{0.525}$.
This lack of better constructions prompts the following question, which arose
from discussion with Goddard.

\begin{question}
\label{question1}
For each integer $t$, does there exist a constant $\Delta_t$ such that
all graphs $G$ with $\Delta\ge \Delta_t$ satisfy $\omega^2(G)\le \Delta^2-t$?
Can this conclusion be strengthened further to 
(i) $\chi^2(G)\le \Delta^2-t$,
(ii) $\chil^2(G)\le \Delta^2-t$,
(iii) $\chip^2(G)\le \Delta^2-t$,
or even 
(iv) $\chiAT^2(G)\le \Delta^2-t$?
\end{question}

%The Borodin--Kostochka Conjecture:
Before we leave this section, we mention the following related conjecture of
Borodin and Kostochka~\cite{BorodinK77}:
For every graph $G$ with $\Delta\ge 9$, if $\omega(G)<\Delta$, then
$\chi(G)\le \Delta-1$.  When restricted to graphs that are squares, this
conjecture implies the coloring analogue of Theorem~\ref{CRpaint}.  In general
the Borodin--Kostochka Conjecture remains open, although
Reed~\cite{Reed99brooks} used the probabilistic method to prove it for $\Delta$
sufficiently large\footnote{He showed that $\Delta\ge 10^{14}$ suffices.
However, he commented in the paper that with more detailed analysis% of the argument
, this could be reduced to $\Delta\ge 10^6$ and maybe even $\Delta\ge
10^3$ (but probably not to $\Delta\ge 10^2$).}.
However, his result is unlikely to help resolve Wegner's conjecture
for any further values of $D$, since quite probably $\omega^2(D)<\Delta^2-1$ for
all $D\ge 6$, with the possible exception of $D=57$.

\section{Planar Graphs and Sparse Graphs}
\label{sec:planar-sparse}
\subsection{Wegner's Planar Graph Conjecture}
\label{sec:wegner}

The most well-known conjecture on coloring squares was made by
Wegner~\cite{Wegner77}, in 1977.

\begin{wegner}[\cite{Wegner77}]
\label{wegner-planar-conj}
If $G$ is planar with maximum degree $\Delta$, then

\begin{align*}
\chi^2(G) \le
\left\{
\begin{array}{ll}
7   & \mbox{if $\Delta=3$} \\
\Delta+5 & \mbox{if $4\le \Delta\le 7$} \\
\Floor{\frac32\Delta}+1 & \mbox{if $\Delta\ge 8$.} 
\end{array}
\right. 
\end{align*}
\end{wegner}

\begin{figure}[!ht]
\centering
\begin{tikzpicture}[thick, scale=.5]
\tikzstyle{uStyle}=[shape = circle, minimum size = 5.5pt, inner sep = 0pt,
outer sep = 0pt, draw, fill=white, semithick]
\tikzstyle{lStyle}=[shape = circle, minimum size = 4.5pt, inner sep = 0pt,
outer sep = 0pt, draw, fill=none, draw=none]
\tikzstyle{usStyle}=[shape = circle, minimum size = 4.5pt, inner sep = 0pt,
outer sep = 0pt, draw, fill=black, semithick]
\tikzset{every node/.style=uStyle}
\def\rad{2.25cm}

\draw (0,0) node (origin) {};
\foreach \ang in {90, 210, 330}
\draw (origin) -- (\ang:\rad*.5) node {} -- (\ang:\rad) node (x\ang) {};

\draw (x90) -- (x210) -- (x330) -- (x90);

\begin{scope}[xshift=2.5in, yshift=.225in, scale=1.07]
\draw (0,0) node (origin) {};
\foreach \ang in {0, 90, 180, 270}
\draw (origin) -- (\ang:\rad*.45) node (x\ang) {} -- (\ang+45:\rad) node (y\ang) {};

\draw (x0) -- (y270) -- (y0) (x90) -- (y0) -- (y90) (x180) -- (y90) -- (y180)
(x270) -- (y180) -- (y270);

\end{scope}

\begin{scope}[xshift=5in]
\draw (0,0) node (origin) {};
\foreach \ang in {90, 210, 330}
\draw (origin) -- (\ang:\rad*.5) node (x\ang) {} -- (\ang:\rad) node (y\ang) {}
(\ang:-.5*\rad) node (z\ang) {};

\draw (x90) -- (x210) -- (x330) -- (x90);
\draw (y90) edge[bend right=25] (y210);
\draw (y210) edge[bend right=25] (y330);
\draw (y330) edge[bend right=25] (y90);

\draw (y90) -- (z330) -- (y210) (z330) -- (x210);
\draw (y210) -- (z90) -- (y330) (z90) -- (x330);
\draw (y330) -- (z210) -- (y90) (z210) -- (x90);
\end{scope}

\begin{scope}[xshift=8.0in, yshift=-.43in, yscale=.555, xscale=1.0]
\draw (0,0) node (A) {};
\draw (0,6) node (B) {};
\draw (-3,4) node (C) {};
\draw (-3,2) node (D) {};
\draw (-2,4) node (E) {};
\draw (-2,2) node (F) {};
\draw (-1,4) node (G) {};
\draw (-1,2) node (H) {};
\draw (0,3) node (J) {};
\draw (1,3) node (K) {};
\draw (2,3) node (L) {};
\draw (3,3) node (M) {};

\draw (A) -- (J) -- (B) -- (C) -- (D) -- (A) -- (F) -- (E) -- (B) -- (G) -- (H)
-- (A) -- (K) -- (B) -- (L) -- (A);
\draw (C) -- (E) -- (D) (G) -- (E) -- (H);
\draw[thick, black, decorate, decoration={snake, amplitude = .3mm, segment length=1.4mm}]
(A) -- (M) -- (B);
\draw (M) node{};
\end{scope}

\end{tikzpicture}
\captionsetup{width=.80\linewidth}
\caption{Diameter 2 planar graphs with maximum degrees 3, 4, 5, 7(6) and orders
7, 9, 10, 12(11), respectively.  (To form the graph of order 11 from that of
order 12, delete the rightmost vertex.) These graphs show that Wegner's
Planar Graph Conjecture is best possible for $\Delta\le 7$.
The case $\Delta\ge 8$ is shown in Figure~\ref{fig:wegner}.%
\label{Wegner-small-pics}%
}
\end{figure}

For all $\Delta$, Wegner gave constructions showing this conjecture is best possible.
Each construction is a diameter 2 graph, that is, a graph $G$ such that $G^2$
is a clique.  For $3\le \Delta\le 7$, the constructions are shown in
%in~\cite{HellS} and~\cite[p.~34]{CranstonW-guide}.  
Figure~\ref{Wegner-small-pics}. (To form the graph with $\Delta=6$ from that
with $\Delta=7$, delete the rightmost vertex.)
For $\Delta\ge 8$,
Figure~\ref{fig:wegner} below shows the construction when $\Delta$ is
even; note that all $3s+1$ vertices of $G_s^2$ form a clique, so
$\chi^2(G_s)=3s+1$.  When $\Delta$ is odd, say $\Delta=2s+1$, we add one more
vertex $v_{s+1}$, adjacent to $v$ and $w$.  Hell and Seyffarth~\cite{HellS}
proved that, for each $\Delta\ge 8$, no planar diameter 2 graph has more
vertices than the one constructed by Wegner.

A natural next step in this direction is to prove that every planar graph $G$ with 
maximum degree $\Delta$ satisfies $\omega(G^2)\le 
\Floor{\frac32\Delta}+1$.
This was accomplished in \cite{cranston-planar-cliques} whenever $\Delta\ge 36$.
Furthermore, if $S$ is a clique in $G^2$ of size close to $\Floor{\frac32\Delta}+1$,
then $S$ ``looks like'' the example in Figure~\ref{fig:wegner}.  More formally, we 
have the following result: If $G$ is a plane graph with $\Delta(G)\ge 19$ and $S$
is a maximal clique in $G^2$ with $|S|\ge \Delta+20$, then there exist $x,y,z\in V(G)$
such that $S=\{w:|N[w]\cap\{x,y,z\}|\ge 2\}$.

\begin{figure}[ht]
\centering
\begin{tikzpicture}[thick]
\tikzstyle{uStyle}=[shape = circle, minimum size = 5.5pt, inner sep = 0pt,
outer sep = 0pt, draw, fill=white, semithick]
\tikzset{every node/.style=uStyle}

\def\iirad{1.15cm}
\def\irad{1.5cm}
\def\orad{3.00cm}
\def\oorad{3.35cm}
% nodes
\foreach \lab/\ang/\dist in {v/150/\irad, v1/90/\irad, vk/90/\orad, w/30/\irad,
w1/330/\irad, wk/330/\orad, x/270/\irad, x1/210/\irad, xk/210/\orad}
\draw (\ang:\dist) node (\lab) {};
% labels
\foreach \lab/\ang/\dist in {v/148/1.20cm, v_1/90/1.25cm, v_s/90/\oorad,
w/32/1.20cm, w_1\,/330/1.20cm, ~~~~w_{s-1}/330/\oorad, x/270/\iirad,
x_1/210/1.20cm, x_{s-1}~~/210/\oorad}
\draw (\ang:\dist) node[draw=none, fill=none] {\footnotesize{$\lab$}};

\draw (v) -- (v1) -- (w) -- (w1) -- (x) -- (x1) -- (v);
\draw (v) -- (vk) -- (w) -- (wk) -- (x) -- (xk) -- (v) -- (x) -- (w);
\draw[dotted] (v1) -- (vk) (w1) -- (wk) (x1) -- (xk);
\end{tikzpicture}

\captionsetup{width=.45\linewidth}
\caption{A planar graph $G_s$ with $\Delta(G_s) = 2s$ and
$\chi^2(G_s)=\omega^2(G_s)=|V(G_s)|=3s+1$.
\label{fig:wegner}
}
\end{figure}

For planar $G$, with $\Delta$ moderately small, 
the current best bound on $\chi^2(G)$,
is due to Molloy and Salavatipour~\cite{MolloyS}.

\begin{theorem} [\cite{MolloyS}]
\label{MolloyS-thm}
If $G$ is a planar graph, then $\chi^2(G)\le \Ceil{\frac53\Delta}+78$.
If also $\Delta\ge 241$, then $\chi^2(G)\le \Ceil{\frac53\Delta}+25$.
\end{theorem}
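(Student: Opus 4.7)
The plan is to attack a minimum counterexample via the probabilistic method, partitioning the colour palette into two parts tuned to exploit the sparsity of planar graphs. Fix a threshold near $\frac{2\Delta}{3}$ and call a vertex \emph{high} or \emph{low} accordingly. Reserve a palette $R$ of roughly $\frac{2\Delta}{3}$ colours dedicated to high vertices, and let $C$ be a disjoint palette of roughly $\Delta + O(1)$ further colours for use on low vertices, so that the total is $|R|+|C| \le \Ceil{5\Delta/3} + 78$. The task then splits into colouring the high vertices first using only $R$, and subsequently extending to the low vertices using $C$ (and whatever unused entries of $R$ happen to be available).

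The first step is structural: one must show that the high vertices are sparse enough that $G^2$ restricted to them can be properly coloured using only the colours of $R$. Since $\mad(G) < 6$ by Euler's formula (Lemma~\ref{lem:folklore}), the number of high vertices is $O(|V|/\Delta)$; a careful discharging argument then bounds the relevant density of $G^2$ on the high-vertex set and hence its chromatic number. The second step is the probabilistic heart of the argument: assign each low vertex a random colour from $C$, and then apply the {\lovasz} Local Lemma to show that with positive probability the result extends to a proper $G^2$-colouring. The point is that the colours blocked at a low vertex $v$ come from (a) its pre-coloured high neighbours (all drawn from the small palette $R$) and (b) its low neighbours together with their low neighbours; these two sources collectively cover at most $\Ceil{5\Delta/3} + O(1)$ colours.

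The main obstacle is the detailed accounting: although a low vertex $v$ can have $\Theta(\Delta^2)$ vertices at distance $2$ in $G$, the number of distinct colours actually forbidden on $v$ must be shown to be only $\frac{5\Delta}{3} + O(1)$. This requires an intricate discharging argument exploiting that the high-degree neighbours of $v$ cannot themselves cluster densely in $G$ (lest planarity fail), so that their contributed constraints overlap heavily and collapse into the small palette $R$ rather than competing for colours in $C$. The quantitative improvement of the additive constant from $78$ to $25$ when $\Delta \ge 241$ is a refinement of the same argument: once $\Delta$ crosses the stated threshold, error terms of order $1/\Delta$ in the Local Lemma calculation become absorbable, allowing $|C|$ to be chosen closer to $\Delta$ and tightening the reserved palette.
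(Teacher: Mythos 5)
The survey does not reproduce Molloy and Salavatipour's proof, so there is no in-paper argument to match line by line; but the survey does tell you how the theorem was proved, namely by the discharging method applied to a minimal counterexample with reducible configurations (the opening of Section~\ref{sec:mad} singles out Theorem~\ref{HvdHMR-thm} as the lone probabilistic exception among the results of Sections~\ref{sec:wegner} and~\ref{sec:planar-girth}). Your Local Lemma plan is therefore not their route, and, more importantly, it has a concrete failure at its first step. With ``high'' meaning degree at least roughly $\frac{2\Delta}{3}$, the high vertices of a planar graph can form a clique of size $\Delta$ in $G^2$: take a tree whose root $c$ has $\Delta$ children, each of which also has degree $\Delta$; all $\Delta$ children are high and pairwise at distance $2$ through $c$. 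So no reserved palette $R$ of size about $\frac{2\Delta}{3}$ can properly colour $G^2$ restricted to the high vertices, and the observation that $\mad(G)<6$ forces only $O(|V(G)|/\Delta)$ high vertices says nothing about their clique number in $G^2$.

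The second step also rests on an assertion that cannot hold. You claim each low vertex ends up with only $\frac{5\Delta}{3}+O(1)$ forbidden colours; if that could be established pointwise, greedy colouring would finish and the Local Lemma would be superfluous, but it cannot be established, precisely because of Proposition~\ref{degen-prop}: there are planar graphs whose squares have minimum degree $\lceil\frac{9}{5}\Delta\rceil$, so in every vertex ordering some vertex is reached with up to $\frac{9}{5}\Delta-O(1)$ already-coloured $G^2$-neighbours, which may carry that many distinct colours. The survey states explicitly that Theorem~\ref{MolloyS-thm} must circumvent this barrier by techniques more sophisticated than local counting. Note also that for a degree-$2$ vertex $u$ wedged between two big vertices, the bulk of its $G^2$-conflicts come from \emph{other low vertices} (coloured from $C$), not from the high vertices coloured from $R$, so the claimed ``collapse into the small palette $R$'' points in the wrong direction. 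The mechanism actually used is that such degree-$2$ vertices behave like edges of a multigraph on the big vertices and are handled by an edge-colouring-style reduction inside the minimal counterexample; the alternative probabilistic mechanism (showing neighbourhoods in $G^2$ decompose into few cliques so that random colours collide often, as in Theorem~\ref{HvdHMR-thm}) yields only $\frac32\Delta(1+\epsilon)$ asymptotically and would not produce the explicit additive constants $78$ and $25$, nor the threshold $\Delta\ge 241$, which in the original paper come from the case analysis of the reducible configurations rather than from absorbing $1/\Delta$ error terms.
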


This result does not extend to $\col^2(G)$ or even to $\chil^2(G)$. 
However, asymptotically we can do much better.
Havet, van den Heuvel, McDiarmid, and Reed~\cite{HavetHMR} proved that Wegner's
Planar Graph Conjecture is true in an asymptotic sense, even for list coloring.  

\begin{theorem}[\cite{HavetHMR}]
\label{HvdHMR-thm}
For each $\epsilon>0$ there exists $\Delta_{\epsilon}$ such
that if $G$ is a planar graph with $\DeltaG\ge \Delta_{\epsilon}$, then 
$\chil^2(G)\le\frac32\DeltaG(1+\epsilon)$.
\end{theorem}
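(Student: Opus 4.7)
The plan is to combine a structural estimate about planar graphs with an iterative probabilistic colouring scheme in the style of Reed and of Molloy--Reed. The structural side will show that, locally, $G^2$ looks like a union of a bounded number of cliques of total size close to $\frac32\Delta$, much smaller than the worst-case $\Delta^2$. The probabilistic side then exploits that local sparsity to push $\chil^2(G)$ down from the trivial $\Delta^2+1$ all the way to $\frac32\Delta(1+\epsilon)$, at the cost of needing $\Delta$ very large in terms of $\epsilon$.

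First I would establish a Wegner-type bound $\omega(G^2)\le \Ceil{\tfrac32 \Delta} + c$ for some absolute constant $c$. A clique $K\subseteq V(G)$ in $G^2$ consists of vertices pairwise at distance at most~$2$ in $G$; fixing $v\in K$ and examining how the remaining vertices of $K$ are distributed among $N_G(v)$ and among common neighbours of pairs in $N_G(v)$ --- subject to planarity of the associated incidence structure --- gives the estimate, essentially matching Wegner's own construction shown in Figure~\ref{fig:wegner}. I would then refine this into a \emph{local structure lemma}: for every $v\in V(G)$, the closed second neighbourhood $N_{G^2}[v]$ can be covered by a bounded number of cliques of $G^2$ whose sizes sum to $\frac32\Delta(1+o(1))$, plus $O(\Delta)$ leftover edges. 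In other words, locally $G^2$ is much closer to a disjoint union of a few cliques than to the complete graph on its up to $\Delta^2$ vertices.

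With that in hand, I would run an iterative partial list-colouring scheme. Start from lists of size $k:=\Ceil{\tfrac32\Delta(1+\epsilon)}$. In each phase, every still-uncoloured vertex activates a uniformly random colour from a carefully chosen subset of its current list; the colour is kept if no conflict arises in $G^2$. Define bad events measuring (i) losing too many colours from $L(v)$ relative to expectation, and (ii) retaining too many uncoloured neighbours in $G^2$. The local structure lemma controls the dependencies between such events, so the {\lovasz} Local Lemma --- together with Talagrand-type concentration applied to the random variables tracking list size and uncoloured degree --- yields a good partial colouring at each phase. After a constant number of phases, at every vertex the remaining list comfortably dominates the remaining uncoloured degree in $G^2$, and a greedy finish completes the colouring.

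The main obstacle is maintaining the inequality ``remaining list size $>$ remaining uncoloured degree'' throughout the iterations; this is exactly where the slack $\epsilon\Delta$ is spent. It is crucial that the local structure lemma deliver a clique cover of total size $\frac32\Delta(1+o(1))$, not merely $\Delta^2$: only then are the variances of the relevant random variables small enough for the concentration inequalities to be effective, and only then are the dependency neighbourhoods in the Local Lemma sparse enough for the product conditions to close. Taking $\Delta\ge \Delta_\epsilon$ absorbs every additive constant, every lower-order term, and all of the LLL slack into the $\epsilon\Delta$ headroom, matching the tight construction of Wegner up to the $1+\epsilon$ factor.
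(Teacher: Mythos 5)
The survey does not actually reprove this theorem: it records only that the proof in \cite{HavetHMR} is ``long and technical,'' that its key lemma treats $G^2$ as if it were a line graph and follows Kahn's asymptotic list-edge-colouring argument \cite{Kahn2}, and that a variant of the Na\"{\i}ve Colouring Procedure sits at its core. Your plan is in that general probabilistic spirit, but it rests on a structural claim that is false. You assert that $N_{G^2}[v]$ can be covered by a \emph{bounded} number of cliques of total size $\frac32\Delta(1+o(1))$, plus $O(\Delta)$ leftover edges; any such cover touches only $O(\Delta)$ vertices. But take a planar tree of depth two: a root $v$ with $\Delta$ children, each child having $\Delta-1$ further leaves. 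Then $|N_{G^2}[v]|=\Delta^2+1$, and $G^2[N_{G^2}(v)]$ contains $\Delta$ pairwise non-adjacent cliques of size $\Delta-1$ (the grandchildren grouped by parent), so no bounded family of cliques plus $O(\Delta)$ extra edges can cover even the edge set, let alone the vertex set. This is not a cosmetic slip: the quadratic degree of $G^2$ is precisely the obstacle the theorem must overcome, and every later step of your argument --- the variance bounds for Talagrand, the sparsity of the dependency neighbourhoods for the Local Lemma, and above all the terminal invariant ``remaining list size exceeds remaining uncoloured degree'' starting from lists of size $\Ceil{\frac32\Delta(1+\epsilon)}$ against an uncoloured degree that begins at $\Delta^2$ --- is premised on it.

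What is true, and what \cite{HavetHMR} exploits, is only that the edges of $G^2$ are covered by the cliques $\{N_G(u)\}_{u\in V(G)}$ together with $E(G)$, so that each vertex lies in up to $\Delta+1$ such cliques, each of size at most $\Delta$; planarity enters to control how these cliques interact pairwise (this is where the constant $\frac32$ comes from, cf.\ the $\omega(G^2)\le\frac32\Delta+C$ bound of Amini, Esperet, and van den Heuvel, which is itself a genuine theorem rather than the warm-up you describe in your first step). With this structure the number of cliques through a vertex is \emph{not} bounded, so one cannot finish with a counting argument; instead one must show that the colours appearing on the $\Theta(\Delta)$ cliques through $v$ are so heavily repeated that they never exhaust a palette of size $\frac32\Delta(1+\epsilon)$. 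Arranging and certifying that repetition is exactly the content of Kahn's machinery for line graphs of multigraphs (hardcore distributions coordinating the colour classes inside the cliques, iterated by a nibble), and it is the part of the proof your proposal does not supply. To repair the plan you would need to replace your local structure lemma with the edge-cover formulation above and then import, or reprove, the corresponding analogue of Kahn's theorem; as written, the argument does not close.
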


In fact, \cite{HavetHMR} proved Theorem~\ref{HvdHMR-thm} more generally for
every ``nice'' class of graphs.  A class of graphs $\G$ is
\Emph{nice} if $\G$ is minor-closed and $\G$ does not contain the complete
bipartite graph $K_{3,t}$ for some positive integer $t$.  (For example, the
class of all planar graphs is nice, since $K_{3,3}$ is non-planar. More
generally, for every surface $S$, the class of graphs embeddable in $S$ is nice.)
One consequence of Theorem~\ref{HvdHMR-thm} is that every nice class of graphs
satisfies $\omega^2(G)\le\frac32\Delta(G)(1+o(1))$ as $\Delta(G)\to \infty$.
For the class of graphs embeddable in each fixed surface $S$,  
Amini, Esperet, and van den Heuvel~\cite{AEvdH}, proved the stronger bound
$\omega^2(G)\le\frac32\Delta(G)+C_S$, for some constant $C_S$ (dependent on $S$).

The proof of Theorem~\ref{HvdHMR-thm}
is long and technical.  But in a sense, the key idea is that list coloring
squares of planar graphs is very similar to list coloring line graphs.  This
problem was solved asymptotically by Kahn~\cite{Kahn2}, and the proof of the key
lemma in~\cite{HavetHMR} follows the approach in~\cite{Kahn2}.
In~\cite{HavetHMR} 
the authors also posed the following conjecture.
It is a special case of the Square List Coloring Conjecture, which we discuss in
Section~\ref{sec:LCS} and which was previously disproved.

\begin{conjecture}[\cite{HavetHMR}]
\label{HvdHMR-conj}
If $G$ is a planar graph, then $\chil^2(G)=\chi^2(G)$.
\end{conjecture}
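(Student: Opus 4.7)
Since $\chi^2(G)\le\chil^2(G)$ holds trivially, the entire content of the conjecture is in the reverse inequality $\chil^2(G)\le\chi^2(G)$. My plan is to attack this by trying to promote the two main upper-bound theorems for $\chi^2$ of planar graphs (Theorem~\ref{MolloyS-thm} and Theorem~\ref{HvdHMR-thm}) into matching bounds for $\chil^2$, and then to force a tight comparison via Wegner's Planar Graph Conjecture. Concretely, I would split into a ``large $\Delta$'' regime and a ``small $\Delta$'' regime. For large $\Delta$, Theorem~\ref{HvdHMR-thm} already gives $\chil^2(G)\le\frac32\Delta(1+\varepsilon)$, while Wegner's construction in Figure~\ref{fig:wegner} shows $\chi^2\ge\lfloor\frac32\Delta\rfloor+1$ is tight; so one route to the asymptotic form of the conjecture is to sharpen the probabilistic argument of Havet--van den Heuvel--McDiarmid--Reed, patterned on Kahn's work on list chromatic index, from $(1+\varepsilon)$ to the exact constant $\lfloor\frac32\Delta\rfloor+1$. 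This looks very hard but at least it is ``locally testable'' against Wegner's conjecture.

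For the small $\Delta$ regime, I would proceed case by case. The natural plan is to recycle the discharging-based proofs of the sharp coloring bounds in the literature, replacing greedy coloring steps by list-coloring steps. Many discharging proofs for $\chi^2$ identify a reducible configuration $C$ whose removal leaves a vertex $v$ with at most $k-1$ colored neighbors in $G^2$, so that a $k$-coloring extends. The same step works for list coloring verbatim whenever ``at most $k-1$ colored neighbors'' is really the only thing used. Where this fails, I would try to strengthen the reducibility by importing kernel-method arguments (for bipartite-like subgraphs) or Alon--Tarsi--style orientation arguments, using the fact that $G^2$ inherits a lot of planar structure (small-separator decompositions, bounded local density on balls of radius 2). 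Since Theorem~\ref{CRpaint} already upgrades to the Alon--Tarsi number in the general Wegner setting, one might hope that the same strengthening can be carried out planar-locally.

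The main obstacle is conceptual rather than computational: the corresponding Square List Coloring Conjecture for general graphs has been disproved (as noted in the excerpt and in Section~\ref{sec:LCS}), so any proof \emph{must} use planarity in an essential way, not merely to bound degrees or local densities. In other words, one needs a feature of $G^2$ that is forced by $G$ being planar and that somehow rules out the list obstructions that exist in the non-planar counterexamples. A promising place to look for such a feature is in the structure of short cycles in $G^2$ (which reflect short cycles and common neighborhoods in $G$, both of which are constrained by Euler's formula, cf.\ the discussion of $\mad(G)<\tfrac{2g}{g-2}$ after Lemma~\ref{lem:folklore}). My expectation is that even isolating and formalizing the right ``planarity-only'' obstruction here is the hard part; without it, any list-coloring discharging argument is likely to reduce to the general (and false) statement. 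For this reason, I would begin by revisiting the known counterexamples to the Square List Coloring Conjecture, trying to pinpoint exactly which planar substructure they avoid, and then build the discharging rules around preserving that substructure.
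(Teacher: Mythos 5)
There is a fundamental problem: the statement you are trying to prove is false, and the paper does not prove it --- it records it only as a conjecture of Havet, van den Heuvel, McDiarmid, and Reed and then presents its refutation. Hasanvand (2022) constructed a counterexample (Theorem~\ref{HvdHMR-c/e}, Figure~\ref{fig:hasanvand}): take $K_4$, subdivide every edge once, and take the line graph. The result is a $12$-vertex cubic claw-free planar graph $G$ with $\chi^2(G)=\omega(G^2)=4$, yet there is an explicit $4$-list-assignment $L$ (built from the sets $\overline{i}=\{1,\dots,5\}\setminus\{i\}$) under which $G^2$ admits no $L$-coloring, so $\chil^2(G)>4=\chi^2(G)$. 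Moreover this is the first member of an infinite family of $2$-connected cubic planar counterexamples. So no amount of sharpening the probabilistic bound of Theorem~\ref{HvdHMR-thm} or upgrading discharging arguments to list coloring can succeed for all planar graphs; the conjecture survives at most in the restricted form where $\Delta\ge 4$, as the paper notes.

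Your own closing diagnostic --- ``revisit the known counterexamples to the Square List Coloring Conjecture and pinpoint exactly which planar substructure they avoid'' --- is exactly the step that would have exposed this: the obstruction does not require any non-planar substructure. The counterexample exploits only that $G^2$ has very few large independent sets (here, exactly four independent sets of size $3$, forced to coincide with the color classes of the unique-up-to-symmetry $4$-coloring), so that a $5$-symbol list assignment can be rigged to make every potential coloring collapse onto a $2$-list-assignment of a chorded $6$-cycle that is not colorable. Nothing in that mechanism is blocked by planarity, girth, or local density, which is why the ``planarity-only obstruction'' you hoped to isolate does not exist for $\Delta=3$.
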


It is difficult to prove general lower bounds on $\chi^2(G)$ that are stronger
than $\Delta+1$.  So most of the graphs for which Conjecture~\ref{HvdHMR-conj}
has been proved are those for which $\chil^2(G)=\Delta+1$.  We discuss these
in more detail in Section~\ref{sec:planar-girth}.  However, in 2022 this
conjecture was disproved by Hasanvand~\cite{hasanvand}.
%(The conjecture does remain open for graphs with maximum degree at least 4.)
(In the same paper, Hasanvand also constructed various other examples of graphs
$G$ with $\chil^2(G)>\chi^2(G)$.  One such family~\cite[Theorem~3.5]{hasanvand} 
consists of certain $4$-regular planar graphs;
but each of these graphs is also a line graph of a 3-regular (planar) graph, so we defer further
details to our discussion of strong edge-coloring in Section~\ref{strong-subcubic-sec}.)

\begin{theorem}[\cite{hasanvand}]
\label{HvdHMR-c/e}
Conjecture~\ref{HvdHMR-conj} is false.  Specifically, there exists a cubic
claw-free planar graph $G$ of order 12 (see Figure~\ref{fig:hasanvand}) such
that $\chi^2(G)=4<\chil^2(G)$.  In fact, this graph is the first in an infinite
family of (2-connected) cubic claw-free planar graphs satisfying the same inequality.
\end{theorem}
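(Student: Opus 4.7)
The plan is to exhibit the specific planar graph $G$ on 12 vertices, verify its structural properties, establish $\chi^2(G)=4$ by an explicit coloring, and then refute $\chil^2(G)=4$ with an explicit bad 4-list assignment; finally, plug $G$ into a gadget operation to produce the promised infinite family.

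First I would describe $G$ combinatorially (for instance, as four triangles glued in a suitable planar cyclic pattern, the kind of construction that makes a cubic claw-free planar graph) and verify directly that it is planar, $3$-regular, and claw-free (the last by observing that each vertex lies in a triangle, so its open neighborhood contains an edge and cannot induce $K_{1,3}$). Since $G$ contains a triangle $uvw$ and each of $u,v,w$ has a third neighbor, the square $G^2$ contains a $K_4$, giving the trivial lower bound $\chi^2(G)\ge 4$. For the matching upper bound, I would exhibit a partition of $V(G)$ into four classes of size $3$ such that no two vertices in the same class lie within distance $2$ in $G$; verifying this amounts to 12 vertex-by-vertex checks on the distance-$2$ neighborhoods. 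This yields $\chi^2(G)=4$.

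The hard part is proving $\chil^2(G)\ge 5$: we must produce a 4-list assignment $L$ on $V(G)$ admitting no proper $L$-coloring of $G^2$. I would build $L$ from a small palette (say $\{1,\dots,6\}$), chosen so that a few ``bottleneck'' vertices have heavily overlapping lists. The strategy is to propagate forced choices: on the $K_4$ in $G^2$ sitting over one triangle $uvw$ together with a common neighbor $z$, the lists will force a specific coloring pattern on $\{u,v,w,z\}$ up to a small number of cases. Pushing those forced colors through $G^2$ via the remaining $K_4$'s produced by the other triangles then creates, in each case, an unavoidable conflict at some fixed pair of distance-$2$ vertices. To organize the case analysis, I would isolate the 3-4 triangle-plus-external-vertex $K_4$'s of $G^2$, enumerate the valid color extensions of each once a root $K_4$ is colored, and show all branches terminate in a monochromatic edge.

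To produce the infinite family, I would design a cubic, claw-free, planar ``insertion gadget'' $H$ with two designated edges: replacing an edge $e$ of the base graph $G$ by the gadget $H$ (identifying two edges of $H$ with the two half-edges of $e$) preserves 2-connectivity, cubicity, claw-freeness, and planarity. Extending $L$ to the new vertices with lists that admit exactly one proper local $L$-coloring forces the same global obstruction, so $\chil^2$ of the enlarged graph still exceeds $\chi^2$, while choosing the gadget's internal coloring compatibly with the four-coloring of $G^2$ shows $\chi^2$ stays at $4$ (or at worst the small fixed value dictated by the gadget). Iterating the insertion produces the promised infinite family. The principal obstacle throughout is the list-coloring non-existence argument in step two; the rest is structural verification.
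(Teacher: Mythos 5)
Your overall architecture matches the paper's: identify the 12-vertex graph (it is the line graph of the once-subdivided $K_4$, i.e.\ four vertex-disjoint triangles joined by six edges), get $\chi^2(G)\ge 4$ from a clique in $G^2$ and $\chi^2(G)\le 4$ from an explicit partition into four distance-$2$-independent triples, and build the infinite family by an edge-replacement gadget. Your gadget step is essentially the paper's: it deletes an edge $xy$ of $G$ not lying in a triangle, splices in $K_4$ minus an edge with two pendant attachment points, extends the lists by giving the new vertices the list shared by $x$ and $y$, and observes that any coloring of the enlarged square restricts to a coloring of $G^2$ from the original bad assignment. That part of your plan is sound in outline (you should note that the replaced edge must be one of the six edges not in a triangle, so that cubicity and claw-freeness survive).

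The genuine gap is the heart of the theorem: you never produce the bad $4$-assignment $L$, nor a verification that $G^2$ admits no $L$-coloring; you only promise a propagation-style case analysis from an unspecified palette of six colors. Since the entire content of ``$\chil^2(G)>4$'' is the existence of such an $L$, this cannot be left as a plan. Moreover, your proposed six-color palette would forfeit the leverage that makes the actual proof short. The paper takes the palette $\{1,\dots,5\}$ and assigns each vertex a list of the form $\overline{i}=\{1,\dots,5\}\setminus\{i\}$. Because $|V(G)|=12$ and only five colors appear in total, any proper coloring must have at least two color classes of size $3$; one then checks that the only four independent sets of size $3$ in $G^2$ are the classes of the $4$-coloring, that the list pattern forces those two monochromatic triples to use colors $4$ and $5$, and that the remaining six vertices induce a $6$-cycle with a chord whose residual $2$-lists (cyclically $\{1,2\},\{1,3\},\{2,3\},\{1,2\},\{1,3\},\{2,3\}$, with the chord's ends sharing a list) admit no proper coloring. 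With six colors available the pigeonhole step fails outright ($12/6=2$), so your branching argument would have to be carried out in full with no structural shortcut, and there is no evidence in your write-up that the branches actually all close. To repair the proof you must exhibit a concrete assignment and complete the non-colorability check; the paper's choice of $L$ and its counting argument is the cleanest known way to do so.
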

The proof is pretty, so we sketch it.
\begin{proof}[Proof Sketch.]
Form $G$ from $K_4$ by subdiving each edge once and taking the line graph; see
Figure~\ref{fig:hasanvand}.  
Clearly $\chi^2(G)\ge \omega^2(G)=4$.
And the coloring on the left shows that $\chi^2(G)\le 4$. 

\begin{figure}[!b]
\centering
\begin{tikzpicture}[thick, scale=.65]
\tikzstyle{uStyle}=[shape = circle, minimum size = 5.5pt, inner sep = 0pt,
outer sep = 0pt, draw, fill=white, semithick]
\tikzstyle{lStyle}=[shape = circle, minimum size = 4.5pt, inner sep = 0pt,
outer sep = 0pt, draw, fill=none, draw=none]
\tikzstyle{usStyle}=[shape = circle, minimum size = 4.5pt, inner sep = 0pt,
outer sep = 0pt, draw, fill=black, semithick]
\tikzset{every node/.style=uStyle}
\def\radA{1.65cm}
\def\radB{.45cm}
\def\radC{.53cm}

\foreach \name/\ang/\labA/\labB/\labC/\labD in {x/90/3/4/1/2, y/210/1/4/2/3,
z/330/2/4/3/1}
{
\draw (\ang:\radA*.6) node (\name1) {} --++ (\ang:\radA) node (\name2) {} --++
(\ang+70:\radA) node (\name3) {} -- (\name2) --++ (\ang-70:\radA) node (\name4) {} -- (\name3);

\draw (\name1) ++ (\ang-60:\radB) node[lStyle] {$\labA$};
\draw (\name2) ++ (\ang-110:\radB) node[lStyle] {$\labB$};
\draw (\name3) ++ (\ang+70:\radB) node[lStyle] {$\labC$};
\draw (\name4) ++ (\ang-70:\radB) node[lStyle] {$\labD$};
}

\draw (x1) -- (y1) -- (z1) -- (x1);
\draw (x3) -- (y4) (y3) -- (z4) (z3) -- (x4);

\begin{scope}[xshift=4in]
\foreach \name/\ang/\labA/\labB/\labC/\labD in
{x/90/{\overline{3}}/{\overline{3}}/{\overline{2}}/{\overline{1}}, 
y/210/{\overline{1}}/{\overline{1}}/{\overline{3}}/{\overline{2}}, 
z/330/{\overline{2}}/{\overline{2}}/{\overline{1}}/{\overline{3}} 
}
{
\draw (\ang:\radA*.6) node (\name1) {} --++ (\ang:\radA) node (\name2) {} --++
(\ang+70:\radA) node (\name3) {} -- (\name2) --++ (\ang-70:\radA) node (\name4) {} -- (\name3);

\draw (\name1) ++ (\ang-60:\radC) node[lStyle] {$\labA$};
\draw (\name2) ++ (\ang-110:\radC) node[lStyle] {$\labB$};
\draw (\name3) ++ (\ang+70:\radC) node[lStyle] {$\labC$};
\draw (\name4) ++ (\ang-70:\radC) node[lStyle] {$\labD$};
}

\draw (x1) -- (y1) -- (z1) -- (x1);
\draw (x3) -- (y4) (y3) -- (z4) (z3) -- (x4);

\end{scope}
\end{tikzpicture}
\caption{Left: A 4-coloring of $G^2$.  Right: A 4-assignment $L$ such that $G^2$
admits no $L$-coloring. This example disproves Conjecture~\ref{HvdHMR-conj}; see
Theorem~\ref{HvdHMR-c/e}.\label{fig:hasanvand}}
\end{figure}

To prove that $\chil^2(G)> 4$, let $\overline{i}:=\{1,2,3,4,5\}\setminus\{i\}$
for each $i\in \{1,2,3,4,5\}$.  Let $L$ be the 4-assignment shown on the right
in Figure~\ref{fig:hasanvand}.  It is straightforward to check that the only (four)
independent sets of size 3 in $G^2$ are the color classes in the coloring on the
left of Figure~\ref{fig:hasanvand}.  Since $|\cup_{v\in V(G)}L(v)|=5$, if $G^2$
admits an $L$-coloring $\vph$, then two of these independent sets of size 3
must each receive a common color.  

It is easy to check that in
each independent set of size 3 some vertex is missing color $i$, for each
$i\in\{1,2,3\}$.  So $\vph$ must use color 4 on one independent set of size 3,
call it $I_1$, and must use color 5 on another independent set of size 3, call
it $I_2$.  Up to symmetry, we have only two choices for $I_1$ and $I_2$: either
(a) one of these is color class 4 on the left or (b) both of these are among
color classes 1, 2, and 3.  In each case, $G^2[V(G)\setminus (I_1\cup I_2)]$
is a 6-cycle with one chord (forming two 4-cycles).  Further, the resulting
2-assignment for this graph is isomorphic to $\{1,2\}, \{1,3\}, \{2,3\},
\{1,2\}, \{1,3\}, \{2,3\}$ (around the 6-cycle) with the ends of the chord
having the same list.  It is easy to check that for these lists $L'$ the
remaining graph admits no $L'$-coloring.

Now we construct infinitely many such graphs.
Form $K_4'$ from the complete graph $K_4$ by deleting an edge $vw$ and adding
pendent edges $vv'$ and $ww'$.  To transform the above graph $G$ to a larger
graph $G'$ satisfying the inequality $\chi^2(G')=4<\chil^2(G')$, we delete some
edge $xy$ that is not in a 3-cycle and identify $x$
and $y$, respectively, with vertices $v'$ and $w'$ in $K_4'$.  It is easy to
check that a 4-coloring of $G^2$ extends to a 4-coloring of $(G')^2$ (in exactly
two ways).  We extend the 4-assignment $L$ for $G^2$ to a 4-assignment $L'$ for
$(G')^2$ by giving each new vertex the list that is already assigned to both
$x$ and $y$.  It is straightforward to check that any $L'$-coloring of $(G')^2$
restricts to an $L$-coloring of $G^2$, which cannot exist.  Thus, $(G')^2$ has
no $L'$-coloring.  Finally, we can apply this construction repeatedly to build
the above-mentioned infinite family of counterexamples to
Conjecture~\ref{HvdHMR-conj}.
\end{proof}

Before returning to Wegner's Planar Graph Conjecture, we mention one other
intriguing conjecture from~\cite{HavetHMR}. 

\begin{conjecture}[\cite{HavetHMR}]
\label{wegner-generalization-conj}
If $\G$ is a nice class of graphs, then there exists $\Delta_0$ such that
if $G\in \G$ and $\Delta(G)\ge \Delta_0$, then $\chi^2(G)\le \chil^2(G)\le
\left\lfloor\frac32\Delta(G)\right\rfloor+1$.
\end{conjecture}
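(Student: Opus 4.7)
The plan is to upgrade the multiplicative $(1+\epsilon)$ slack of Theorem~\ref{HvdHMR-thm} to an additive $O(1)$ slack by sharpening both the structural input from $\G$ and the probabilistic colouring scheme that drives its proof. First I would establish a uniform neighbourhood structure lemma for every nice class $\G$: there is a constant $C_\G$ such that, for each $G\in\G$ and each vertex $v$, the $G^2$-neighbourhood of $v$ has at most $\frac32\Delta+C_\G$ vertices and admits a cover by $O(1)$ cliques. For surfaces this essentially matches the Amini--Esperet--van den Heuvel bound $\omega^2(G)\le \frac32\Delta+C_S$; to extend to arbitrary nice classes I would exploit the fact that a $K_{3,t}$-minor-free graph locally resembles a near-planar graph around any high-degree vertex, so that $N_{G^2}(v)$ decomposes into a bounded number of cliques plus a linear-in-$\Delta$ surplus of non-edges.

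Second I would run a Kahn-style iterative naive colouring on $G^2$ starting from lists of size $\lfloor \tfrac32\Delta\rfloor+1$. In each round every uncoloured vertex tentatively picks a colour uniformly from its current list and keeps it only if no $G^2$-neighbour picked the same. The clique-cover structure forces the expected number of neighbours retaining any given colour to be bounded away from $1$ by a positive constant, not merely by an $\epsilon$-amount; Talagrand's inequality and the {\lovasz} Local Lemma then yield an additive gain of $\Omega(\Delta)$ between list size and residual degree at each vertex per round. After a constant number of rounds, a final greedy pass in a $\col^2$-order finishes the colouring on the residue, using that $\col^2(G)=O(\Delta)$ for nice classes.

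The main obstacle, and presumably the reason Conjecture~\ref{wegner-generalization-conj} is still open, will be boundary accounting: forcing the cumulative slack from concentration errors, LLL dependency inflation, atypical vertices, and rounding in the iteration to collapse into a single constant independent of $\Delta$. A plausible route is to split $V(G)$ into a heavy part, to which the random scheme applies cleanly with linear-in-$\Delta$ savings, and a light part, handled deterministically via the coloring-number bound and a greedy pass, then to glue the two partial colourings using a small reserve of colours drawn from the original budget of $\lfloor \tfrac32\Delta\rfloor+1$. Keeping this reserve of size $O(1)$ rather than $o(\Delta)$, so that the total palette never exceeds $\lfloor \tfrac32\Delta\rfloor+1$ once $\Delta\ge \Delta_0(\G)$, is the delicate point where any proof must be especially careful.
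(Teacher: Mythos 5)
The statement you are trying to prove is an open conjecture of Havet, van den Heuvel, McDiarmid, and Reed; the paper does not prove it and explicitly lists it among the open problems, so there is no proof of record to compare against, and your proposal should be read as a research plan rather than a proof. As a plan it contains one concretely false step and one unjustified key claim. The false step is your ``uniform neighbourhood structure lemma'' asserting that $|N_{G^2}(v)|\le\frac32\Delta+C_\G$ for every vertex $v$ of every $G\in\G$. This fails already for trees: if you take $K_{1,\Delta}$ and attach $\Delta-1$ pendant edges to each leaf, the centre has degree $\Delta^2$ in the square. The Amini--Esperet--van den Heuvel bound you invoke controls $\omega(G^2)$, the size of \emph{cliques} in the square, not the size of second neighbourhoods; conflating the two invalidates the density analysis on which your iterative colouring rounds depend. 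What nice classes actually give you is a degeneracy-type bound $\col^2(G)=O(\Delta)$ (for planar graphs, $\col^2(G)\le\lceil\frac95\Delta\rceil+1$), which is precisely why a greedy or order-based argument stalls well above $\frac32\Delta$.

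The unjustified claim is the assertion that the clique-cover structure forces an additive gain of $\Omega(\Delta)$ per round of the na\"{\i}ve colouring procedure, ``bounded away from $1$ by a positive constant, not merely by an $\epsilon$-amount.'' That is exactly the content of the conjecture, not a consequence of the setup: the machinery of Kahn as adapted in~\cite{HavetHMR} is known to deliver $\chil^2(G)\le\frac32\Delta(1+\epsilon)$ (Theorem~\ref{HvdHMR-thm}), and compressing the error term from $\epsilon\Delta$ to an additive constant is the open problem, analogous to the jump from $\Delta+o(\Delta)$ to $\Delta+O(1)$ for total colouring, which required the much harder structural decomposition of Molloy and Reed in Theorem~\ref{thm:MolloyR-total}. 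Your final paragraph correctly locates the difficulty in the ``boundary accounting,'' but identifying where a proof must be careful is not the same as carrying out that step, so the proposal does not establish Conjecture~\ref{wegner-generalization-conj}.
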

This conjecture is best possible, since equality holds for the graphs in
Figure~\ref{fig:wegner}. 

Although Wegner's Planar Graph Conjecture has been solved asymptotically,
whenever $\Delta\ge 4$ the conjecture remains open.
In contrast, the case $\Delta=3$ has been confirmed by
Thomassen~\cite{thomassen-wegner3} and independently by Hartke, Jahanbekam, and
Thomas~\cite{HJT}.  

\begin{theorem}[\cite{thomassen-wegner3,HJT}]
\label{wegner3-thm}
If $G$ is planar and $\DeltaG\le 3$, then $\chi^2(G)\le 7$.
\end{theorem}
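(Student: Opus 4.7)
The plan is to proceed by minimum counterexample and to combine reducible configurations with a discharging argument in the style of most planar-graph coloring proofs. The starting point is Theorem~\ref{CRpaint}, which, since no planar graph is the Petersen graph, the Hoffman--Singleton graph, or a graph with $\Delta=57$ and $G^2=K_{57^2+1}$, already gives $\chiAT^2(G)\le\Delta^2-1=8$ for every planar $G$ with $\Delta\le 3$; the task is thus to save a single additional color by exploiting planarity.

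I would let $G$ be a counterexample minimizing $|V(G)|$ and begin with low-degree reductions. If $v\in V(G)$ has $d(v)\le 2$, then $|N_{G^2}(v)|\le d(v)\cdot\Delta\le 6$, so any $7$-coloring of $(G-v)^2$---which exists by minimality---extends to $G^2$; hence $G$ is cubic. Further minimality arguments let me assume $G$ is $2$-edge-connected and contains no small separating structures along which a coloring could be split and recombined. Since $\Delta(G^2)$ may now be as large as $9$, with only $7$ colors available the extension step leaves very little slack per vertex.

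Next I would assemble a list of reducible configurations, each consisting of a short face (or a short string of short faces) together with its incident $3$-vertices and their outside neighbors. For each such configuration $C$, one deletes or contracts a small portion of $C$, obtains a $7$-coloring of the reduced graph's square by minimality, and then extends back to $G^2$: the extension is carried out by greedy enumeration when enough colors are free, and otherwise by a Kempe-style recoloring in $G^2$, swapping the two colors of a bichromatic component so as to liberate a needed color. Simultaneously I would set up a discharging argument: assign each vertex charge $d(v)-4=-1$ and each face charge $\ell(f)-4$, so by Euler's formula the total charge is $-8$. I would then route charge from faces of length $\ge 5$ to triangles and quadrilaterals, and onward to vertices incident to many short faces, tuning the rules so that any configuration with negative final charge is one of the reducible ones.

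The main obstacle is the exceptional tightness of the color budget. Because $\Delta(G^2)$ can reach $9$ while we have only $7$ colors, each uncolored vertex in the extension step admits only one or two free colors, and so the reducible configurations must be genuinely small---typically a triangle, a pair of adjacent triangles, or a $4$-face with a specific adjacency pattern---each requiring delicate case analysis of how the already-colored neighborhood can look. The discharging scheme then has to be fine-grained enough that no non-reducible configuration survives with negative final charge. Both existing proofs of Theorem~\ref{wegner3-thm}, by Thomassen and by Hartke--Jahanbekam--Thomas, are long precisely for this reason, and I would expect the bulk of any new attempt to consist of this configuration-by-configuration bookkeeping rather than a fundamentally new global idea.
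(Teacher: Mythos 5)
The survey itself does not prove Theorem~\ref{wegner3-thm}; it only cites the two known proofs and remarks that Hartke--Jahanbekam--Thomas use discharging plus extensive computer case-checking, while Thomassen uses a detailed structural analysis together with the Four Color Theorem. Your proposal is a plan in the spirit of the first of these, but it is not a proof: the entire mathematical content of that approach --- the explicit list of reducible configurations, the case analyses establishing their reducibility (which in \cite{HJT} required computer verification), the precise discharging rules, and the verification that every minimal counterexample contains a reducible configuration --- is absent. You acknowledge this yourself in the final paragraph, but that is exactly the part that constitutes the theorem; an outline that defers all of it cannot be accepted. There is no reason to believe the sketch could be completed by hand without either the computational effort of \cite{HJT} or the structural machinery (including the Four Color Theorem) of \cite{thomassen-wegner3}.

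There is also a concrete local error in the one reduction you do spell out. You claim that if $d(v)\le 2$ then a $7$-coloring of $(G-v)^2$ extends to $G^2$. But $(G-v)^2$ is not $G^2-v$: if $v$ has neighbors $a$ and $b$, then $a$ and $b$ are adjacent in $G^2$ (at distance $2$ through $v$), yet they need not be adjacent in $(G-v)^2$, so the inherited coloring may assign them the same color and fail to be proper on $G^2-v$. The standard fix is to suppress $v$ (delete it and add the edge $ab$, which preserves planarity and the degree bound) and square the resulting graph instead; since this only adds constraints, its $7$-coloring restricts to a proper coloring of $G^2-v$. The degree count $|N_{G^2}(v)|\le 6$ is correct, so the extension then goes through, but as written the step is wrong. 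This is repairable, unlike the global gap above.
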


The two
proofs of Theorem~\ref{wegner3-thm} take strikingly different approaches. 
The work of Hartke et al.~uses a fairly straightforward discharging argument,
together with extensive computer case-checking to
prove that various configurations are reducible.  The proof of Thomassen relies
on a detailed structural analysis, and also uses the Four Color Theorem.
\bigskip

Many of the bounds known on $\chi^2(G)$ and $\chil^2(G)$ for planar graphs
 actually hold for $\col^2(G)$, although the authors often don't state this fact.
To illustrate the idea of such a proof, we show that every planar graph $G$ satisfies
$\col^2(G)\le9\DeltaG-3$.  Let $v_1,\ldots,v_n$ be an order of the vertices
showing that $\col(G)\le 6$.  More precisely, let $v_1,\ldots,v_n$ be a vertex
order and let $G_i:=G\setminus\{v_1, v_2, \ldots, v_{i-1}\}$ such that
$d_{G_i}(v_i)\le 5$ for all $i$.  Similarly, let $G_i^2:=G^2\setminus\{v_1, v_2,
\ldots,v_{i-1}\}$.  We will show, for each $i$, that $d_{G^2_i}(v_i)\le 9\DeltaG$.

Recall that $v_i$ has at most five neighbors in $G_i$; each of these neighbors has
at most $\DeltaG-1$ other neighbors in $G_i$, for a total of at most $5\DeltaG$
neighbors in $G^2_i$.  In addition, $v_i$ may be adjacent in $G_i^2$ to some
vertex $w$ such that $x$ is a common neighbor of $v_i$ and $w$ in $G$, but $x$
precedes $v_i$ in the order.  Each such $x$ is followed by at most 4 neighbors in
the order, other than $v_i$.  Since $v_i$ is preceded by at most $\Delta$ such
vertices $x$, it has at most $4\Delta$ neighbors in $G_i^2$ of this type.
Thus, $d_{G^2_i}(v)\le 5\DeltaG+4\DeltaG=9\DeltaG$.  
Hence $\col^2(G)\le 9\Delta+1$. 
(More generally~\cite{KrumkeMR01}, if $G$ is $k$-degenerate, then $G^2$ is
$((2k-1)\Delta)$-degenerate.)

The first result in this direction was due to T.~K.~Jonas~\cite{Jonas93}.  Work in his
dissertation showed, implicitly, that $\col^2(G)\le 8\Delta-22$ when $\Delta\ge 5$.
(In fact, Jonas studied $L(2,1)$-labelings, which we consider in Section~\ref{sec:L21}.)
Wong~\cite{Wong}, in his masters thesis, strengthened this to $\col^2(G)\le
3\Delta+5$.  Next, van den Heuvel and McGuinness~\cite{vdHM03} showed that
$\col^2(G)\le2\Delta+25$; see also~\cite[Theorem 6.10]{CranstonW-guide} for a simplified
proof that $\col^2(G)\le 2\Delta+34$.  
%\footnote{

More than 4 decades since the work of Jonas, proving bounds on $\chi^2(G)$ for planar graphs $G$ remains 
a popular line of research~\cite{Aoki6,Aoki5,CMZ,Hajjar,HJMZ,KRT,ZHL}, primarily when $\Delta(G)$ is 
fairly small.  Bousquet, Deschamps, de Meyer, and Pierron~\cite{BDMP} proved that if
$G$ is planar with $\Delta\ge 9$, then $\chi^2(G)\le 2\Delta+7$.  This is the
best known bound on $\chi^2(G)$ when $9\le \Delta\le 31$ (however, the proof at one point
requires permuting colors, so it does not extend to list coloring or degeneracy). %}
This result was extended~\cite{Deniz6-8} by Deniz to the case $6\le \Delta\le 8$.
When $\Delta=5$, the best bound~\cite{Deniz5} is 16;
when $\Delta=4$, the best bound~\cite{BDMP-automatic} is 12;
and when $\Delta=3$, as mentioned above, the best bound~\cite{HJT,thomassen-wegner3} is 7, which is sharp.

\captionsetup{width=.725\linewidth}
\begin{figure}[!b]
\centering
\begin{tikzpicture}[thick] %, scale=.75]
\tikzstyle{uStyle}=[shape = circle, minimum size = 6.5pt, inner sep = 0pt,
outer sep = 0pt, draw, fill=white, semithick]
\tikzset{every node/.style=uStyle}

\begin{scope}
\draw (0,0) node (B) {};

\foreach \name/\angle in {A/0, C/60, D/150, E/210, F/300}
	\draw (\angle:1.618cm) node (\name) {};

\draw (A) ++ (-30:1.618cm) node (G) {};
\draw (A) ++ (30:1.618cm) node (H) {};

	\draw (A) -- (C) -- (D) -- (B) -- (E) -- (F) -- (B) -- (C) -- (H) -- (A) -- (G) -- (F) -- (A);
	\draw[line width=.08cm] (A) -- (B) (D) -- (E) (G) -- (H);
\end{scope}

\begin{scope}[xshift=3in]
\draw (0,0) node (B) {};

\foreach \name/\angle in {A/0, C/60, D/150, E/210, F/300}
	\draw (\angle:1.618cm) node (\name) {};
\draw (A) ++ (-30:1.618cm) node (G) {};
\draw (A) ++ (30:1.618cm) node (H) {};

\foreach \to/\from in {A/C, C/D, D/B, B/E, E/F, F/B, B/C, C/H, H/A, A/G, G/F, F/A}
{
	\draw (\to) to node[pos=.5, minimum size=4pt] {} (\from);
	\draw (\to) to [bend left = 20] node[midway, minimum size=4pt] {} (\from);
	\draw (\to) to [bend left = -20] node[pos=.5, minimum size=4pt] {} (\from);
}

	%\draw (A) -- (C) -- (D) -- (B) -- (E) -- (F) -- (B) -- (C) -- (H) -- (A) -- (G) -- (F) -- (A);
	\draw[line width=.08cm] (A) -- (B) (D) -- (E) (G) -- (H);
\foreach \to/\from in {A/B, D/E, G/H}
	\draw (\to) to [bend right = 25] node[midway, minimum size=4pt] {} (\from);
\end{scope}

\end{tikzpicture}
	\caption{Left: A portion of the icosahedron and perfect matching $M$ (shown in bold).
	Right: The corresponding portion of the construction when $k=3$.\label{icos-fig}}
\end{figure}

Earlier, the best possible bound on $\col^2(G)$ was proved, for sufficiently large maximum
degree, by Agnarsson and
Halld\'orsson~\cite{AgnarssonH03} and Borodin, Broersma, Glebov, and van den
Heuvel~\cite{BBGvdH1,BBGvdH2}. 
\begin{theorem}[\cite{AgnarssonH03,BBGvdH1,BBGvdH2}]
If $G$ is planar with $\Delta$ large enough, then  
$\col^2(G)\le\Ceil{\frac95\Delta}+1$.
\end{theorem}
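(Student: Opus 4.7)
The plan is to prove the bound by showing that for every nonempty $V'\subseteq V(G)$ there is a vertex $v\in V'$ with $d_{G^2[V']}(v)\le \lceil 9\Delta/5\rceil$; iterating this fact yields a greedy elimination order on $V(G)$ with back-degree in $G^2$ bounded by $\lceil 9\Delta/5\rceil$, which gives the desired bound on $\col^2(G)$. The reduction uses $\col^2(G)=1+\max_{H\subseteq G^2}\delta(H)$ together with the trivial observation that every $H\subseteq G^2$ satisfies $\delta(H)\le \delta(G^2[V(H)])$. A subtlety worth flagging is that $d_{G^2[V']}(v)$ depends on the \emph{whole} graph $G$, not only on $G[V']$, since a common $G$-neighbor of $v$ and some $w\in V'$ may itself lie in $V(G)\setminus V'$; so the analysis must work inside $G$ while restricting attention to pairs inside $V'$.

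Next I would fix such a $V'$, assume for contradiction that every $v\in V'$ satisfies $d_{G^2[V']}(v)>\lceil 9\Delta/5\rceil$, and run discharging on the planar graph $G[V']$ together with the relevant length-2 paths of $G$. Assign each vertex initial charge $\mu_0(v):=d_{G[V']}(v)-6$; by Euler's formula the total charge is strictly negative (once $|V'|$ is large enough to avoid trivial cases). It then suffices to design discharging rules under which every $v\in V'$ ends with non-negative charge, contradicting negativity of the total. The rules should funnel charge away from vertices of very high degree and toward vertices that are themselves low-degree or have many low-degree neighbors, since those are precisely the vertices whose $G^2[V']$-degree we can certify is at most $\lceil 9\Delta/5\rceil$.

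The main obstacle is calibrating the rules to yield the factor $9/5$ rather than the crude $2$. A naive rule in which each edge $uv$ transfers a fixed quantity from its higher-degree endpoint to the lower already produces the van den Heuvel--McGuinness--style bound $2\Delta+O(1)$. To sharpen to $9\Delta/5$ one must exploit that each neighbor $u$ of $v$ contributes only $d(u)-1$ to $d_{G^2}(v)$, not a full $\Delta-1$: a vertex with many genuinely high-degree neighbors already has $G^2$-degree comfortably above $9\Delta/5$ and so cannot be a "bad" vertex, while a vertex with many small-degree neighbors has slack in which to absorb enough charge to become non-negative. The extremal ratio between these two regimes, combined with the planar constraint that the subgraph induced by near-maximum-degree vertices is sparse (a consequence of Euler's formula applied to the "heavy" subgraph), pins the critical coefficient at exactly $9/5$. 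The requirement that $\Delta$ be large enough enters only to swallow additive constants and to exclude a finite list of small pathological configurations, which is consistent with the papers \cite{AgnarssonH03,BBGvdH1,BBGvdH2} tightening both the threshold on $\Delta$ and the constant additive term over successive refinements.
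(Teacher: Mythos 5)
This theorem is one the survey cites rather than proves (the text only establishes the matching lower bound, Proposition~\ref{degen-prop}), so there is no in-paper argument to compare against; judged on its own terms, your proposal is a plan for a proof, not a proof. The framework you set up is the right one: bounding $\col^2(G)$ reduces to showing every induced subgraph $G^2[V']$ has a vertex of degree at most $\Ceil{\frac95\Delta}$, and you correctly flag that $d_{G^2[V']}(v)$ depends on common $G$-neighbors lying outside $V'$. But everything after that is deferred. You never state the discharging rules, never identify the reducible configurations, and never verify that any vertex surviving the rules has $G^2[V']$-degree at most $\Ceil{\frac95\Delta}$. The sentence asserting that ``the extremal ratio between these two regimes \dots pins the critical coefficient at exactly $9/5$'' is precisely the theorem restated, not an argument for it; nothing in the sketch distinguishes $9/5$ from, say, $7/4$ or $19/10$, and the same outline would ``prove'' any of them.

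The missing idea is the structural one that actually produces the constant. In the known proofs (the ``stars and bunches'' papers of Borodin, Broersma, Glebov, and van den Heuvel, and Agnarsson--Halld\'orsson), the tight configurations are \emph{bunches}: large collections of internally disjoint $2$-paths between a fixed pair of high-degree vertices, exactly the structure appearing in the icosahedron-based extremal graph of Proposition~\ref{degen-prop}. A degree-$2$ vertex inside a bunch between $v$ and $w$ sees roughly $d(v)+d(w)$ vertices at distance two, minus the other bunch vertices it shares with both, and the $9/5$ emerges from optimizing how many bunches of what sizes can be packed around the at most five ``big'' neighbors permitted by planarity ($\mad<6$). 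A purely local rule of the form ``each edge sends a fixed charge from its higher-degree endpoint to its lower'' cannot see this pairing structure and will not beat $2\Delta$; indeed, even the $2\Delta+O(1)$ bound you attribute to such a rule requires more care than that. Until the rules are written down and the final-charge case analysis is carried out against the bunch configurations, the proposal has not established the bound.
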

The first group %~\cite{AgnarssonH03} 
proved that $\Delta\ge 750$ suffices %is big enough 
and the second
%~\cite{BBGvdH1,BBGvdH2} 
that $\Delta\ge47$ suffices.  Each group 
%that $\col^2(G)\le 1+\Ceil{\frac95\Delta}$; they 
also gave the same construction (below) showing that this bound is best possible. 

\begin{proposition}[\cite{AgnarssonH03,BBGvdH2}]
For each $\Delta\ge 9$, there exists a planar graph $G_{\Delta}$, with maximum
degree $\Delta$, such that $G_{\Delta}^2$ has minimum degree
$\ceil{\frac95\Delta}$.
\label{degen-prop}
\end{proposition}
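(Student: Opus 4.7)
The plan is to construct an explicit planar graph $G_\Delta$ with $\Delta(G_\Delta)=\Delta$ in which every vertex has at least $\ceil{\frac95\Delta}$ other vertices within distance $2$. Since $\col(H)\ge \delta(H)+1$ for any graph $H$ (by taking $H\subseteq G_\Delta^2$ to be $G_\Delta^2$ itself), such a construction would match the upper bound $\col^2(G)\le \ceil{\frac95\Delta}+1$ from the preceding theorem and show that that bound is best possible.

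The strategy is to build $G_\Delta$ from a repeating planar gadget that already realizes the ratio $9{:}5$ in a base case, and then scale the degree up. A natural starting point is a triangulated strip (or a triangulated cylinder) in which every interior vertex has degree $5$ and reaches exactly $9$ distinct other vertices within distance $2$; this already gives the desired ratio when $\Delta=5$. To handle larger $\Delta$, I would replace each vertex of the base strip with a planar ``cluster'' of roughly $\Delta/5$ vertices arranged as a fan, and promote each edge of the base strip to a planar bundle of edges joining the two clusters at its endpoints. Each vertex of the resulting scaled-up graph should then inherit the $9{:}5$ ratio of the base pattern, with small local corrections near a boundary cluster used to absorb the residue $\Delta\bmod 5$.

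After writing down the construction, the verification splits into three checks: (i) planarity of the embedding, which is inherited from the planar embedding of the base strip; (ii) the maximum-degree bound, which is secured by choosing the cluster sizes carefully so that the fan edges within a cluster together with the bundle edges leaving it sum to at most $\Delta$; and (iii) for every vertex $v$, the count
\[
|N_{G_\Delta^2}(v)| \;=\; |N(v)| \;+\; |\{w:\dist(v,w)=2\}| \;\ge\; \ceil{\tfrac95\Delta},
\]
which is established by a case analysis on the (finitely many) vertex types in the construction.

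The main obstacle is arranging the construction so that no vertex loses too many second-neighbors at cluster boundaries and so that planarity holds uniformly; since we cannot blow up non-planar patterns such as $K_{3,3}$, the routing of the bundled edges between clusters requires care. A secondary obstacle is handling the five residues of $\Delta \bmod 5$ so that the count matches the \emph{ceiling} $\ceil{\frac95\Delta}$ exactly rather than merely asymptotically; this is typically settled by adjusting a single cluster's size by one vertex, depending on the residue class.
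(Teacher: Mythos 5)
Your proposal stops at the level of a plan: no graph is actually constructed, and the three verification steps (planarity, maximum degree, second-neighborhood counts) are named but never carried out. The difficulties you flag at the end are not minor loose ends --- they are the entire content of the proposition, and the specific mechanism you propose runs into them head-on. If you replace each base vertex by a cluster of $m\approx\Delta/5$ vertices and each base edge by a ``planar bundle'' between two clusters, then each such bundle is a planar bipartite graph on $2m$ vertices and so has at most $4m-4$ edges; no vertex of a cluster can be joined to all of an adjacent cluster, and a vertex $u\in C_v$ reaches a vertex of a distance-two cluster $C_x$ only if some specific neighbor of $u$ in an intermediate cluster happens to carry a bundle edge into $C_x$. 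Verifying that \emph{every} vertex still collects $\lceil\frac95\Delta\rceil$ vertices within distance two under these routing constraints is exactly the hard part, and nothing in the proposal shows it can be done. The base pattern is also shaky: a long triangulated strip with all interior vertices of degree $5$ does not exist (discrete curvature forces only finitely many degree-$5$ interior vertices), and in the one genuine $5$-regular planar triangulation, the icosahedron, each vertex has $10$ (not $9$) other vertices within distance two.

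The construction in the paper scales up in a different and much more tractable way: it keeps the icosahedron's twelve vertices and multiplies its \emph{edges} rather than blowing up its vertices. Each edge is replaced by a bundle of $k$ internally disjoint paths of length $2$ (these nest planarly, so planarity is automatic), except that the edges of one perfect matching are retained and given only $k-2$ such paths, which tunes each original vertex's degree to exactly $5k-1=\Delta$. The vertices realizing the minimum degree of the square are then the degree-$2$ midpoints: such a vertex sees its two endpoints, the $k-1$ parallel midpoints, and the full neighborhoods of both endpoints minus that overlap, giving $9k-1=\lceil\frac95\Delta\rceil$. Note in particular that the $9{:}5$ ratio arises from a degree-$2$ vertex whose two degree-$\Delta$ neighbors have neighborhoods overlapping in about $\Delta/5$ vertices --- not, as your sketch assumes, from a degree-$5$ vertex seeing $9$ others. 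The residues of $\Delta\bmod 5$ are handled by treating more of the five perfect matchings in the special way, which is cleaner than adjusting a cluster size. I would recommend abandoning the vertex-cluster blow-up and reworking the argument along these lines.
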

\begin{proof}[Proof Sketch.] We just prove the case $\Delta=5k-1$, but the other cases are
similar.  Let $H$ be an icosahedron, embedded in the plane; see the left of Figure~\ref{icos-fig}.  
Note that we can partition the edges of $H$ into five perfect matchings; denote one of these by
$M$.  For each $vw\in E(H)\setminus M$, replace $vw$ by $k$ paths of length 2 joining $v$
and $w$.  For each $vw\in M$, keep $vw$ and add $k-2$ paths of length 2 joining
$v$ and $w$.  Call the resulting graph $G_\Delta$.  

It is easy to check that each $v\in V(G_\Delta)$ has either (a)
$d_{G_\Delta}(v)=5k-1$ or (b) $d_{G_\Delta}(v)=2$.  (a) If $v\in V(G_\Delta)$
and $d_{G_\Delta}(v)=5k-1$, then $v$ is
a vertex of $H$ and $d_{G_\Delta^2}(v) = k-2 + 8k + 5 = 9k+3$.
(b) Suppose instead $v\in V(G_\Delta)$ and $d_{G_\Delta}(v)=2$; so $v$ arose from
some edge $e$ of $G$.  If $e\in E(H)-M$,  
then $d_{G_\Delta^2}(v)=k-1+2+2(3k+k-2+1)=9k-1$.
If instead $e\in M$,  then $d_{G_\Delta^2}(v)=k-3+2+2(4k)=9k-1$.  Thus
$G_\Delta^2$ has minimum
degree $9k-1=\ceil{\frac95\Delta}$, as desired.  When $\Delta\ne 5k-1$, we
treat more of the five perfect matchings as we treated $M$.
\end{proof}

%Recall that 
Theorems~\ref{MolloyS-thm} and~\ref{HvdHMR-thm} both circumvent the
``barrier'' implied
by Proposition~\ref{degen-prop}.  To accomplish this, they must rely on coloring
techniques more sophisticated than simply coloring greedily in the reverse of a
vertex order witnessing the degeneracy of the graph.

We will soon consider coloring squares of planar graphs with high girth.
To end this section, we offer a brief motivation.
It is natural to search for classes of graphs where we can prove an upper bound
on $\chi^2$ closer to the trivial lower bound of $\Delta+1$.  For the class of
all planar graphs, the best upper bound we can hope for is
$\chi^2\le\left\lfloor\frac32\Delta\right\rfloor+1$, as witnessed by the graphs 
in Figure~\ref{fig:wegner}.  To prove a stronger upper bound, we must exclude
these graphs.  One natural approach is to forbid cycles of certain lengths.
These examples contain cycles only with lengths in $\{3,4,5,6\}$.  But the
3-cycles and 5-cycles are not crucial for the construction.
If we subdivide edges $vx$ and $wx$, the resulting graph $G'$ is bipartite
with $\chi^2(G')=3s$.  Thus, to significantly improve the bound, we
must exclude either 4-cycles or 6-cycles.  One obvious way to do the former
is to require girth at least 5.  This motivates the topic of the next section.
%
%Because of the examples in Figure~\ref{fig:wegner}, we cannot prove any bound on
%$\chi^2(G)$ less than $\Floor{\frac32\Delta}+1$.  If we seek such a bound, then we
%must restrict the class of graphs to forbid those in Figure~\ref{fig:wegner}. 
%One natural choice is to consider graphs with higher girth.  Although
%Figure~\ref{fig:wegner} does contain two 3-cycles, these are not crucial for
%the construction.  If we subdivide edges $vx$ and $wx$, the resulting graph $G'$
%has girth 4 and $\chi^2(G')=3s$.  Thus, to significantly improve the bound, we
%require girth at least 5.  This is the topic of the next section.

\subsection{Planar Graphs of High Girth}
\label{sec:planar-girth}

Wang and Lih~\cite{WangL03} were among the first to study $\chi^2$ for planar
graphs with high girth.  (In fact, for these graphs they considered
$L(2,1)$-labelings, which we discuss briefly in Section~\ref{sec:L21}.)  If $G$
has girth at least 5, then clearly the construction in Figure~\ref{fig:wegner}
is excluded.  This absence of constructions led Wang and Lih to ask whether for
$\Delta$ sufficiently large the trivial lower bound, $\Delta+1$, is also an
upper bound.

\begin{conjecture}[\cite{WangL03}]
For every $k\ge 5$ there exists $\Delta_k$ such that if $G$ is a planar graph
with girth at least $k$ and $\Delta\ge \Delta_k$ then $\chi^2(G)=\Delta+1$.
\label{conj:wang-lih}
\end{conjecture}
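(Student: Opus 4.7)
The plan is to prove the conjecture via the discharging method. The lower bound $\chi^2(G) \geq \Delta+1$ is immediate, since any vertex $v$ of maximum degree together with its $\Delta$ neighbors forms a clique of size $\Delta+1$ in $G^2$. So the task reduces to the matching upper bound, and I would actually aim for the stronger claim $\col^2(G) \leq \Delta+1$ for $\Delta$ large enough. This amounts to showing that every non-empty subgraph $H \subseteq G$ contains a vertex $v$ with $d_{H^2}(v) \leq \Delta$; via greedy coloring in the reverse order it yields $\chi^2(G) \le \Delta+1$.

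I would work with a minimal counterexample: a planar graph $G$ of girth at least $k$ and maximum degree at least $\Delta_k$ such that some non-empty subgraph of $G^2$ has minimum degree at least $\Delta+1$; by passing to that subgraph we may assume $\delta(G^2) \geq \Delta+1$. Because the girth is at least $5$, no two distinct neighbors of a vertex $v$ share a common neighbor other than $v$, so $d_{G^2}(v) = \sum_{u \sim v} d(u)$. The constraint $\delta(G^2) \geq \Delta+1$ thus becomes the clean local inequality $\sum_{u \sim v} d(u) \geq \Delta+1$ for every $v$. In particular, every low-degree vertex must be ``propped up'' by a neighbor of degree close to $\Delta$.

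Next I would run a discharging argument tailored to $k$. Assign each vertex $v$ the initial charge $d(v) - \frac{2k}{k-2}$; by Euler's formula and the girth hypothesis, the total charge is strictly negative. I would move charge from high-degree vertices to small-degree neighbors along threads, with rules exploiting $\sum_{u \sim v} d(u) \geq \Delta+1$ to bound how many thin vertices each heavy vertex must support. Since each heavy vertex is charged at most $\Delta$ times but has surplus $\Delta - \frac{2k}{k-2}$, once $\Delta_k$ is chosen large enough every vertex finishes with non-negative charge, contradicting the negative total. For very large $k$ the Euler bound $\mad(G) < \frac{2k}{k-2}$ is so strong that this is essentially automatic, and for moderately large $k$ one can transpose discharging schemes already developed for square colorings of sparse planar graphs.

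The hard part will be $k = 5$ and, to a lesser extent, $k = 6$. For $k = 5$ the bound $\mad(G) < 10/3$ leaves essentially no slack, and single-vertex reducibility is far too weak: one must enlarge the family of reducible configurations to include long $2$-threads, ``light edges,'' and more intricate patterns, and then prove their reducibility not by raw degeneracy but by genuine extension arguments (uncoloring plus Kempe-type exchanges, Hall-type matchings on color palettes, or Alon--Tarsi orientations that exploit the results of Section~\ref{gen-graphs:sec}). Matching such delicate reducibility against a discharging scheme that still produces a contradiction is where nearly all the difficulty concentrates, and where I would expect a proof attempt to stall; indeed, for $k=5$ the conjecture appears to remain open.
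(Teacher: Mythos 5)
This statement is a conjecture, not a theorem, and the paper's point in presenting it is that it is \emph{false} for $k\in\{5,6\}$ and true only for $k\ge 7$. Your proposal therefore contains a fatal gap: you set out to prove the statement for all $k\ge 5$, and you locate the ``hard part'' at $k=5$ and $k=6$, suggesting the case $k=5$ ``appears to remain open.'' In fact Borodin et al.~\cite{BorodinGINT04} (see Proposition~\ref{prop:girth6} and Figure~\ref{lowerbounds}) construct, for \emph{every} $D\ge 3$, a planar graph $G_D$ with girth $6$ and $\Delta=D$ such that $\chi^2(G_D)=D+2$. Since girth $6$ implies girth at least $5$, these graphs show that no threshold $\Delta_k$ can exist for $k=5$ or $k=6$: the gadget forces two vertices at distance $3$ to receive distinct colors in any $(D+1)$-coloring of the square, producing a $(D+2)$-clique-like obstruction, and this obstruction persists no matter how large $\Delta$ is. No amount of refinement of reducible configurations or discharging rules can overcome a family of explicit counterexamples, so the program you outline is doomed for exactly the two values of $k$ you flag as difficult. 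The correct resolution is that the answer for girth $5$ and $6$ is $\Delta+2$, not $\Delta+1$ (Theorem of Dvo\v{r}\'ak et al.\ and of Bonamy--Cranston--Postle for girth $5$).

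For $k\ge 7$ your outline is broadly consistent with how the known proofs go (discharging against the $\mad(G)<\frac{2k}{k-2}$ bound, with reducible configurations such as threads and light vertices, and with the girth hypothesis cleaning up the local structure of $G^2$), though one caveat: aiming for $\col^2(G)\le\Delta+1$ is strictly stronger than what is needed, and the published proofs typically rely on reducible configurations whose reducibility uses recoloring or list-extension arguments rather than pure degeneracy, so even for large $k$ the single-vertex degeneracy claim is not ``essentially automatic'' in the form you state it. But the decisive issue is the one above: the statement you are trying to prove is refuted for $k\in\{5,6\}$ by a construction appearing in this very paper.
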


Borodin et~al.~\cite{BorodinGINT04} proved the Wang--Lih Conjecture for $k\ge
7$.  
\begin{theorem}[\cite{BorodinGINT04}]
If $G$ is planar with girth at least 7 and $\Delta\ge 30$, then 
$\chi^2(G)=\Delta+1$.
\end{theorem}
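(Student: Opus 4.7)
The plan is as follows. The lower bound $\chi^2(G) \ge \Delta + 1$ is immediate, since any vertex of degree $\Delta$ together with its neighbors forms a clique in $G^2$. For the matching upper bound, I would take a vertex-minimum counterexample $G$ (planar, $g(G) \ge 7$, $\Delta(G) \ge 30$, and $\chi^2(G) \ge \Delta + 2$) and derive a contradiction by combining reducible configurations with discharging.

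First I would exhibit reducible configurations. A $1$-vertex $v$ with neighbor $u$ is trivially reducible: by minimality $\chi^2(G-v) \le \Delta + 1$, and $d_{G^2}(v) = d_G(u) \le \Delta$ leaves a color free. A $2$-vertex $v$ with neighbors $u, w$ satisfying $d_G(u) + d_G(w) \le \Delta$ is likewise reducible, because $g(G) \ge 7$ forces the closed second neighborhood of $v$ to be a tree and so $d_{G^2}(v) = d_G(u) + d_G(w)$. A variant of this argument handles \emph{threads} (maximal paths whose internal vertices all have degree $2$): deleting the thread, coloring by minimality, and extending greedily from one branch end succeeds provided the thread is not too long relative to the degrees of its branch endpoints. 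In particular, in the counterexample every $2$-vertex has a neighbor of degree at least $\lceil (\Delta+1)/2 \rceil \ge 16$, and threads joining low-degree branch vertices are severely restricted in length.

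Next I would discharge. Since $g(G) \ge 7$, Euler's formula yields $\sum_v \bigl(d(v) - \tfrac{14}{5}\bigr) + \sum_f \bigl(\ell(f) - \tfrac{14}{5}\bigr) < 0$, and the face charges are nonnegative because $\ell(f) \ge 7$. Hence the deficit sits on vertices of degree $2$, each short by $\tfrac{4}{5}$. The rule would send $\tfrac{4}{5}$ from the high-degree branch endpoint of a thread to each internal $2$-vertex of that thread, where ``high-degree'' means above some threshold near $16$ (justified by the previous paragraph). The reducibility results cap the number of $2$-vertices a high-degree vertex can be forced to feed by a fraction of its degree, so such a vertex has final charge at least $d(v) - \tfrac{14}{5} - \tfrac{4}{5}d(v) \ge 0$ once $d(v)$ is sufficiently large, which the bound $\Delta \ge 30$ is tailored to ensure. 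A short check for vertices of moderate degree in $[3, 15]$, which neither donate nor accumulate deficit, completes the contradiction.

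The main obstacle I anticipate is the simultaneous tuning of the reducibility list and the discharging rule. Establishing reducibility of longer threads requires a careful induction along the thread together with a Hall-type extension at the final $2$-vertex, and the constants must line up so that the $\tfrac{4}{5}$ absorbed by each $2$-vertex is exactly what its high-degree neighbor can afford. The hypothesis $\Delta \ge 30$ is tight enough that cases have to be distinguished according to where the ``big'' neighbor of a $2$-vertex lies along the thread and according to the degree of the other branch endpoint; the cleanest organization probably handles threads of length $1$, $2$, and $\ge 3$ separately, invoking $g(G) \ge 7$ to rule out the short cycles among branch vertices that would otherwise sabotage the greedy extension.
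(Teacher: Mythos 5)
The survey states this theorem without proof; it is a cited result of Borodin, Glebov, Ivanova, Neustroeva, and Tashkinov, and their argument is indeed a minimal-counterexample, reducible-configuration, discharging proof of the general shape you outline, so your strategy matches the known one. The difficulty is that what you have written is a plan, and the two places where you defer the work are exactly where the proof lives; as stated, both contain genuine gaps. First, the reducibility of threads is not a routine greedy extension. For a thread $u\,v_1\,v_2\,w$ with $d(u)=\Delta$, the vertex $v_1$ satisfies $d_{G^2}(v_1)=d(u)+2=\Delta+2$, so no ordering of the uncolored vertices lets you finish at $v_1$ greedily; one needs a recoloring or identification argument, and this is precisely what drives the threshold on $\Delta$. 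Moreover, deleting a $2$-vertex $v$ does not preserve the square: $(G-v)^2$ is missing the edge between the two neighbors of $v$, so ``color $G-v$ by minimality and extend'' is not literally valid; one must add an edge or suppress $v$, and either operation can create a cycle of length $6$ and destroy the girth hypothesis, or else one must argue separately that the two neighbors receive distinct colors. (Relatedly, deletion can drop the maximum degree below $30$, so the induction must be run on a statement of the form $\chi^2(G)\le\max\{\Delta(G),30\}+1$.)

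Second, the discharging does not close as written. Each $2$-vertex must receive $\frac45$, a vertex $u$ of degree $d$ can be the end of $d$ threads, and a thread can carry two internal $2$-vertices; if $u$ must feed both in each thread it sends up to $\frac85 d$ and ends with $d-\frac{14}{5}-\frac85 d<0$ for every $d$. Your computation $d-\frac{14}{5}-\frac45 d\ge 0$ (which in any case requires $d\ge 14$, not merely ``$d$ sufficiently large'') silently assumes each high-degree vertex feeds at most one $2$-vertex per incident thread; but then the second $2$-vertex of a thread whose far endpoint has degree $3$ (and hence only $\frac15$ of surplus charge) goes unfed unless a reducibility lemma forbids that configuration. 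You assert that ``the reducibility results cap the number of $2$-vertices\dots{} by a fraction of its degree'' but never exhibit such a result, and it is exactly this interaction between the configuration list and the rule from which the constant $30$ must emerge. Until the reducibility lemmas are actually proved and the final charges are verified against them, the argument is a plausible outline rather than a proof.
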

In contrast, for each integer $D$ at
least 3, they constructed~\cite{BorodinGINT04} a planar graph $G_D$ with girth 6 and
$\Delta=D$ such that $\chi^2(G_D)= \Delta+2$; this disproved
Conjecture~\ref{conj:wang-lih} for each $k\in \{5,6\}$.  
Subsequently, Dvo\v{r}\'{a}k et~al.~\cite{DvorakKNS08} gave a slightly simpler
construction, which we present below.

\newcommand\gadget[1]{%
\begin{scope}[xshift=1.75*#1mm, scale=.65]
\draw [thick] (0,0) node (x#1) {} 
  -- (-2,-1) node {} -- (-2,-2) node {} -- (0,-3) node (y#1) {} -- (-1,-2) node {}
  -- (-1,-1) node {} -- (x#1) -- (2,-1) node {} -- (2,-2) node {} -- (y#1) -- (1,-2)
  node {} -- (1,-1) node {} -- (x#1) (y#1) -- (0,-4) node (yz#1) {};
\draw (0,-1.5) node[draw=none] {$\dots$};
\end{scope}%
}

%\iffalse
\begin{figure}[ht]
\centering
\begin{tikzpicture}[scale=.9]
\tikzstyle{uStyle}=[shape = circle, minimum size = 5.5pt, inner sep = 0pt,
outer sep = 0pt, draw, fill=white, semithick]
\tikzset{every node/.style=uStyle}
\foreach \i in {5,30,48,72,90}
{
	\gadget{\i}
}

\draw (10.5,-3.5) node (z) {};
\draw (10.5, .7) node (v) {};
\draw[thick] (10.5, -1.95) node (w) {} -- (z) (w) -- (v);
\draw (10.75,0.37) node[draw=none] {\footnotesize{$v$}};
\draw (10.85,-1.95) node[draw=none] {\footnotesize{$w$}};
\draw (10.75,-3.20) node[draw=none] {\footnotesize{$z$}};
\draw (1.2,-2.05) node[draw=none] {\footnotesize{$y$}};
\draw[thick] (yz5) --++ (0,-.65) node (yp) {};
\draw (yp) ++ (3.25mm,0mm) node[draw=none, shape=rectangle] {\footnotesize{$z$}};
\draw (yp) ++ (1mm,-6.0mm) node[draw=none, shape=rectangle] {\footnotesize{$G'_D$}};
\draw (z) ++ (1mm,-6.0mm) node[draw=none, shape=rectangle] {\footnotesize{$G_D$}};
\draw (9.50,-2.35) node[draw=none] {$\ldots$};
\draw (11.50,-2.35) node[draw=none] {$\ldots$};

\foreach \i in {30,48,72,90}
\draw[thick] (z) -- (yz\i) (x\i) -- (v);

\foreach \i/\lab/\x/\y in {5/~/1/3.2, 30/1/-4.25/1, 48/2/-4.25/1, 72/D-2/6.5/1, 90/D-1/6.5/1}
\draw (x\i) ++ (\x mm, \y mm) node[draw=none, shape=rectangle] {\footnotesize{$x_{\lab}$}};
\end{tikzpicture}

\caption{In any $(D + 1)$-coloring of the square of $G'_D$, the
$(D - 1)$-vertex $x$ and the 1-vertex $z$ must receive distinct colors.
Thus, $\chi(G_D^2) \ge D + 2$.}
\label{lowerbounds}
\end{figure}
%\fi

\begin{proposition}[\cite{BorodinGINT04,DvorakKNS08}]
For every $D\ge 3$, there exists a planar graph $G_D$ with girth 6 and
$\Delta=D$ such that $\chi^2(G_D)=D+2$; see the right of
Figure~\ref{lowerbounds}.  
\label{prop:girth6}
\end{proposition}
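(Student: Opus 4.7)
The plan is to construct $G_D$ as on the right of Figure~\ref{lowerbounds}, check the structural properties (planarity, girth $6$, $\Delta = D$) by inspection, and then prove $\chi^2(G_D) = D+2$ by matching bounds. The upper bound $\chi^2(G_D) \le D+2$ is routine: assign $D+1$ distinct colors to the $G_D^2$-clique $\{v, w, x_1, \ldots, x_{D-1}\}$, give $z$ the remaining color, and then color each gadget using the symmetric structure of its interior. All the content is in the lower bound $\chi^2(G_D) \ge D+2$.

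The crux is the gadget claim from the figure caption: in every proper $(D+1)$-coloring $\varphi$ of $(G'_D)^2$, the degree-$(D-1)$ vertex $x$ and the pendant $z$ receive different colors. Label the internal vertices of the $j$-th length-$3$ path as $a_j$ (neighbor of $x$) and $b_j$ (neighbor of $y$), and let $y'$ denote the common neighbor of $y$ and $z$. The key structural observation is that $\{y, b_1, \ldots, b_{D-1}, y'\}$ is a clique of size $D+1$ in $(G'_D)^2$ (all pairs share the neighbor $y$ or are joined via $y'$), so it uses every color. I would then split on whether $\varphi(x) = \varphi(y)$.

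If $\varphi(x) = \varphi(y)$, then because $z$ lies at distance $2$ from $y$ in $G'_D$ (via $y'$), we have $\varphi(z) \ne \varphi(y) = \varphi(x)$ immediately. If instead $\varphi(x) \ne \varphi(y)$, note that $\{x, a_1, \ldots, a_{D-1}\}$ and $\{y, a_1, \ldots, a_{D-1}\}$ are each cliques of size $D$ in $(G'_D)^2$, so $\varphi(a_j) \notin \{\varphi(x), \varphi(y)\}$ for every $j$, which forces $\{\varphi(x), \varphi(y)\} \cup \{\varphi(a_j)\}_{j}$ to exhaust the palette. For each $b_j$, the constraints $\varphi(b_j) \notin \{\varphi(y), \varphi(x), \varphi(a_j)\}$ combined with pairwise distinctness of the $\varphi(b_j)$'s force $\{\varphi(b_j)\}_{j} = \{\varphi(a_j)\}_{j}$ as color sets. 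Consequently the only color still available for $y'$ inside its $(D+1)$-clique at $y$ is $\varphi(x)$, and since $z$ is adjacent to $y'$, we conclude $\varphi(z) \ne \varphi(y') = \varphi(x)$.

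Given the gadget claim, the lower bound follows in one step. Assume for contradiction that $\varphi$ is a proper $(D+1)$-coloring of $G_D^2$. Each gadget subgraph of $G_D$, with the global $z$ playing the role of the pendant, is a copy of $G'_D$, so applying the claim yields $\varphi(z) \ne \varphi(x_i)$ for every $i$. The set $\{v, w, x_1, \ldots, x_{D-1}\}$ is a clique of size $D+1$ in $G_D^2$ (every pair shares a neighbor in $\{v, w\}$), so it already uses every color; but $z$ is adjacent in $G_D^2$ to both $v$ (via $w$) and $w$ (directly), and by the gadget claim $\varphi(z)$ also avoids every $\varphi(x_i)$. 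Thus $\varphi(z)$ avoids all $D+1$ colors, a contradiction. The main obstacle is the case analysis for the gadget claim, specifically the bookkeeping that forces $\varphi(y') = \varphi(x)$ when $\varphi(x) \ne \varphi(y)$.
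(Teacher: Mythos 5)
Your proposal is correct and follows essentially the same route as the paper: the identical gadget lemma (that the $(D-1)$-vertex $x$ and the pendant $z$ receive distinct colors in any $(D+1)$-coloring of $(G'_D)^2$), assembled via the same observation that $\{v,w,x_1,\ldots,x_{D-1}\}$ is a $(D+1)$-clique in $G_D^2$ whose colors, together with those of the $x_i$, leave nothing for $z$. The only difference is that your two-case proof of the gadget lemma is more laborious than the paper's: since $x$ lies within distance $2$ of every neighbor $b_j$ of $y$, its color is immediately confined to $\{\varphi(y),\varphi(y')\}$, both of which $z$ must avoid, with no case split needed.
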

\begin{proof}
Consider the graph $G'_D$ on the left in Figure~\ref{lowerbounds}, where
$d(y)=D$.  We show that in any $(D+1)$-coloring of $(G'_D)^2$
vertices $x$ and $z$ must get distinct colors.  By symmetry, say that $y$ is
colored 1, its common neighbor with $z$ is colored $D+1$, and its other
$D-1$ neighbors each get a distinct color from $\{2,\ldots,D\}$.  Now
$x$ is colored from $\{1,D+1\}$, but $z$ is not.  Thus, $x$ and $z$ get
distinct colors.  To see that $\chi^2(G_D)\ge D+2$, let
$S:=\{x_1,\ldots,x_{D-1},v,w,z\}$ on the right in Figure~\ref{lowerbounds}. 
Note that $S\setminus\{z\}$ induces $K_{D+1}$ in $G^2_D$.  Further, $z$ must
get a color distinct from each vertex of $S\setminus\{z\}$.  Thus, any proper
coloring of $G^2_D$ uses at least $D+2$ colors.
\end{proof}

These results lead to two natural questions. (i) For each $k\ge7$, what is the
smallest $D$ such that every planar $G$ with girth $g\ge k$ and $\Delta\ge D$
satisfies $\chi^2(G)=\Delta+1$?  Similarly for $\chil^2(G)$.  (ii) For each $g\in
\{5,6\}$, what is the smallest constant $C$ such that every planar $G$ with
sufficiently large $\Delta$ satisfies $\chi^2(G)\le \Delta+C$?  We address these %two 
questions in order.

Borodin, Ivanova, and Neustroeva~\cite{BIN04}
proved that $\chi^2(G)=\Delta+1$ when $G$ is planar, $\Delta\ge D$, and
$g\ge k$, for all $(D,k)\in\{(3,24), (4,15), (5,13), (6,12), (7,11), (9,10), (16,9)\}$.
(Later, La and Montassier~\cite{LM-potential} proved the same upper bound when $(D,k)=(7,9)$.
And La, Montassier, Pinlou, and Valicov~\cite{LMPV} proved this bound when $(D,k)=(9,8)$.)
A few years later, the authors of~\cite{BIN04} extended their results to list coloring.
They showed~\cite{BIN07} that $\chil^2(G)=\Delta(G)+1$ when
$(D,k)\in\{(3,24), (4,15), (5,13),$ $(6,12), (7,11),$ $(9,10)$, $(15,8)$, $(30,7)\}$.
Shortly thereafter, Ivanova~\cite{Ivanova10} strengthened the bounds for some values
of $k$: If 
$(D,k)\in\{(5,12), (6,10),$ $(10, 8), (16, 7)\}$,  then $\chil^2(G)=\Delta(G)+1$.

\begin{figure}[!h]
\centering
\begin{tabular}{r|cccccccc}
$\Delta\ge$ & 3 & 4 & 5 & 6 & 10 & 16 & $-$\\
\hline
girth $\ge$ & 24 & 15 & 12 & 10 & 8 & 7 & 6
\end{tabular}
\caption{Pairs $(D,k)$, in columns, such that $\chil^2(G)=\Delta+1$ if $G$ is a planar
graph with maximum degree $\Delta\ge D$ and girth $g\ge k$.  (The entry $(-,6)$
denotes that no such pair $(D,6)$ exists. References are in the preceeding few
paragraphs.)}
\end{figure}

In 2008, Dvo\v{r}\'ak, \kral, Nejedl\'{y}, and
\v{S}krekovski~\cite{DvorakKNS08} showed that for $k=6$, the Wang--Lih
Conjecture fails only by 1.

\begin{theorem}[\cite{DvorakKNS08}]
If $G$ is planar with girth at least 6 and
$\Delta\ge8821$, then $\chi^2(G)\le \Delta+2$.
\end{theorem}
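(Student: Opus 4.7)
The plan is to prove the bound by a discharging argument applied to a minimal counterexample. Let $G$ be a planar graph of girth at least $6$ with $\Delta(G) \ge 8821$ that is not $(\Delta+2)$-colorable on its square, and choose $G$ to minimize $|E(G)|$. Minimality is the coloring-extension hypothesis: for every proper subgraph $G' \subsetneq G$, there is a proper $(\Delta+2)$-coloring of $(G')^2$ from which I will attempt to extend. The argument splits into a catalogue of reducible configurations and a discharging scheme that forbids all of them simultaneously.

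For the reducibility step, I would first record the easy observations that $G$ has minimum degree at least $2$ (otherwise deleting a leaf $v$ gives a smaller counterexample whose $(\Delta+2)$-coloring extends, since $v$ has at most $\Delta$ neighbors in $G^2$), and that every $2$-vertex $v$ with neighbors $u_1, u_2$ satisfies $d(u_1) + d(u_2) \ge \Delta + 3$ (else $v$ has at most $\Delta + 1$ neighbors in $G^2$ and extends). The heart of the work is to identify more elaborate forbidden configurations: short \emph{threads} (paths of $2$-vertices) between vertices whose degrees do not compensate, vertices of moderate degree surrounded by too many $2$-vertices, adjacent low-degree vertices whose combined second-neighborhood in $G^2$ is small, and so on. A typical reducibility argument deletes a small piece of $G$, applies minimality, then un-colors a bounded number of vertices whose colors block extension, and finally re-extends using the palette of size $\Delta + 2$; guaranteeing that after un-coloring enough free colors remain at each relevant vertex is precisely what forces $\Delta$ to be large.

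The discharging step assigns initial charge $\mu(v) = 2d(v) - 6$ to each vertex $v$ and $\mu(f) = \ell(f) - 6$ to each face $f$. Euler's formula, combined with girth $\ge 6$ (which gives $2|E| \ge 6|F|$), yields total initial charge $-12$. I would then design redistribution rules that send charge from high-degree vertices and long faces to $2$-vertices and $6$-faces, aiming for every final charge to be non-negative; verification proceeds by local case analysis at each vertex and each face, invoking in each case the absence of some configuration ruled out in the reducibility step. A non-negative total then contradicts the global total $-12$.

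The main obstacle is the reducibility step. Bounding $\chi^2(G)$ by $\Delta + 2$ rather than by $\Delta + C$ for a larger constant leaves almost no slack: a typical reduction has room for only one or two ``wasted'' colors, so the configurations one forbids must be extremely tight. In particular, showing that a $2$-vertex thread is reducible typically requires uncoloring and rerouting color choices at vertices several steps away, and the analysis must handle all possible interactions between the second-neighborhoods in $G^2$. Designing a list of reducible configurations that is simultaneously rich enough to close the discharging scheme and tight enough to actually be reducible is what drives the large threshold $\Delta \ge 8821$; the discharging bookkeeping itself, once the configurations are in hand, is routine by comparison.
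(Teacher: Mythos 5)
The survey does not actually prove this theorem; it only states it with a citation to Dvo\v{r}\'{a}k, Kr\'{a}l', Nejedl\'{y}, and \v{S}krekovski, noting elsewhere that their argument (like nearly everything in this part of the literature) is a discharging proof. So your choice of framework --- minimal counterexample, reducible configurations, discharging with charges $2d(v)-6$ on vertices and $\ell(f)-6$ on faces summing to $-12$ --- is indeed the right genre, and your arithmetic for the total charge is correct. But what you have written is a template, not a proof. Every piece of content that actually makes the theorem true is deferred: you never exhibit the list of reducible configurations beyond the two trivial ones, never prove reducibility of any nontrivial configuration (threads of $2$-vertices, clusters of low-degree neighbors, and the like), never state a single discharging rule, and never verify that the rules produce nonnegative final charge at every vertex and face. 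You candidly identify the reducibility step as ``the main obstacle'' and observe that designing configurations tight enough to be reducible yet rich enough to close the discharging is what forces $\Delta\ge 8821$ --- but identifying the obstacle is not the same as overcoming it. As it stands, the proposal would equally well ``prove'' the false statement with $\Delta+1$ in place of $\Delta+2$ (which Proposition~\ref{prop:girth6} shows cannot hold), since nothing in it depends on the specific bound.

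One concrete slip in the one reducibility claim you do make: for a $2$-vertex $v$ with neighbors $u_1,u_2$ in a girth-$6$ graph, $v$ has exactly $d(u_1)+d(u_2)$ neighbors in $G^2$, so deletion of $v$ followed by greedy extension with $\Delta+2$ colors succeeds whenever $d(u_1)+d(u_2)\le \Delta+1$. The structural conclusion in a minimal counterexample is therefore $d(u_1)+d(u_2)\ge\Delta+2$, not $\ge\Delta+3$ as you assert. In a proof where, as you yourself note, there is essentially no slack between the target $\Delta+2$ and the obstructions, an off-by-one in the very first structural lemma would propagate through the entire discharging analysis. To turn this into a proof you would need to either reconstruct the full configuration catalogue and discharging scheme of the original paper or find a genuinely new route; neither is present here.
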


By using the same core idea as~\cite{DvorakKNS08}, but a more refined
discharging argument, Borodin and Ivanova strengthened this result: in 2009
they showed~\cite{BorodinI09girth6} that $\Delta\ge 18$ implies
$\chi^2(G)\le\Delta+2$ and also~\cite{BorodinI09girth6choice} that $\Delta\ge
36$ (and later~\cite{BorodinI09girth6choiceB} that $\Delta\ge 24$) implies
$\chil^2(G)\le \Delta+2$.  
For a simpler proof when $\Delta\ge 295$, see~\cite[Section 1.4]{GCM}.
Also, see Theorem~\ref{BLP3-thm}, below.
Furthermore, Dvo\v{r}\'{a}k et~al.~\cite{DvorakKNS08} conjectured that
a similar result holds for girth 5.

\begin{conjecture}[\cite{DvorakKNS08}]
There exists $\Delta_0$ such that if $G$ is a planar graph
with girth at least 5 and $\Delta\ge \Delta_0$ then $\chi^2(G)=\Delta+2$.
\label{conj:girth5}
\end{conjecture}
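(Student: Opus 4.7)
The plan is to prove the two sides separately. The lower bound $\chi^2(G)\ge\Delta+2$ is immediate from Proposition~\ref{prop:girth6}, because every member of that girth-$6$ construction already has girth at least $5$; so the entire burden is to establish $\chi^2(G)\le\Delta+2$ for planar $G$ with $g(G)\ge 5$ and $\Delta$ sufficiently large. I would attempt this via the standard minimum-counterexample framework that has succeeded for girth $\ge 6$ in \cite{DvorakKNS08,BorodinI09girth6} and for girth $\ge 7$ in \cite{BorodinGINT04}: assume $G$ is a vertex-minimal planar graph with $g\ge 5$ and $\chi^2(G)>\Delta+2$, then derive a contradiction by exhibiting an unavoidable reducible configuration.

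First I would compile a list of local configurations, each of which is reducible in the sense that if $G$ contains such a configuration, then one can delete a small set $S$, apply minimality to color $(G-S)^2$ with $\Delta+2$ colors, and extend greedily to $S$. Natural candidates are $2$-vertices whose two neighbors are not both of very high degree, adjacent pairs of vertices both of small degree, and $5$-faces bounded mostly by small-degree vertices. Reducibility hinges on the girth-$5$ property that two distinct neighbors of any vertex $v$ share only $v$ itself, which keeps the second-neighborhood counting nearly tight: a $k$-vertex has at most $\sum_{u\in N(v)}(d(u)-1)+(k-1)$ forbidden colors in $G^2$, and we need this to be at most $\Delta+1$ at the moment of extension.

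Second I would run a discharging argument. Assign each vertex $v$ initial charge $d(v)-4$ and each face $f$ charge $\ell(f)-4$; by Euler's formula together with $g(G)\ge 5$ the total charge is $-8<0$. The rules would push positive charge from large faces and from $\ge 5$-vertices toward $2$-, $3$-, and $4$-vertices and toward $5$-faces, in such a way that the absence of every configuration on the reducibility list forces every vertex and face to end with nonnegative charge, a contradiction. The threshold $\Delta_0$ enters here: each high-degree vertex must donate enough charge per small neighbor that the $\Delta\to\infty$ slack compensates for worst-case $5$-face clusters, and this requires $\Delta$ to exceed a constant determined by the rules themselves.

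The main obstacle, and presumably the reason the conjecture remains open, is precisely the $5$-faces. Unlike girth $6$, where short faces are already large enough that two of them cannot interact too intricately around a common low-degree vertex, girth-$5$ graphs admit dense ``flower'' configurations in which many $5$-cycles share a moderate-degree vertex $v$, forcing $v$ to see close to $\Delta+d(v)$ colors in $G^2$ and leaving no slack to spare for the $\Delta+2$ bound. I expect that handling these clusters reducibly will require either a very delicate discharging scheme finely tuned to local $5$-face patterns (perhaps with potential-function arguments in the style of \cite{LM-potential}), or a hybrid strategy in which the dense $5$-face regions are pre-colored probabilistically along the lines of \cite{MolloyS,HavetHMR} and discharging is then applied only to the remaining sparse portion of $G$.
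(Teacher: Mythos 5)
Your handling of sharpness is fine: the girth-$6$ construction of Proposition~\ref{prop:girth6} has girth at least $5$, so for every $D$ it exhibits a planar graph in the class with $\chi^2=\Delta+2$, and the real content of the statement is the upper bound $\chi^2(G)\le\Delta+2$. But for that upper bound you have not given a proof --- you have given a plan that stalls exactly where the difficulty lies. You propose a list of candidate reducible configurations without verifying that any of them is actually reducible under the count $\sum_{u\in N(v)}(d(u)-1)+(k-1)\le\Delta+1$ (which already fails for a vertex with two neighbors of degree close to $\Delta$), you propose a discharging scheme without specifying rules or proving that the configurations are unavoidable, and you then concede that the dense clusters of $5$-faces are an obstacle you do not know how to overcome, even asserting that this is ``presumably the reason the conjecture remains open.'' The conjecture is not open: it was confirmed by Bonamy, Cranston, and Postle~\cite{BonamyCP}, which is the proof the survey points to. So the proposal has a genuine gap --- the entire upper-bound argument --- and its diagnosis that bounded-size reducible configurations plus vertex/face discharging will not suffice is, in fact, consistent with why that route was never made to work.

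It is worth contrasting your sketch with what the actual proof does, because the successful argument departs from your framework in exactly the place you got stuck. Bonamy et al.\ prove the stronger statement $\chiAT^2(G)\le\Delta+2$ for planar $G$ with girth at least $5$ and $\Delta\ge 2{,}689{,}601$, and they do \emph{not} use discharging at all: the only global sparsity input is that planar graphs have average degree less than $6$. The key idea is to abandon bounded-size configurations in favor of a class of arbitrarily large reducible configurations, called \emph{regions}, consisting essentially of two high-degree vertices $v$ and $w$ together with all of their low-degree neighbors lying on $v,w$-paths of length at most $3$. Reducing an entire region at once is what absorbs the ``flower'' interactions among $5$-cycles that defeat the local greedy count in your outline. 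If you want to pursue this problem, that structural shift --- from local discharging to large-scale reducible regions --- is the missing idea.
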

Conjecture~\ref{conj:girth5} 
was eventually confirmed by  Bonamy, Cranston, and Postle~\cite{BonamyCP}.
\begin{theorem}[\cite{BonamyCP}]
Conjecture~\ref{conj:girth5} is true.
\end{theorem}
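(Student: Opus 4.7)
The plan is to prove the upper bound $\chi^2(G) \le \Delta+2$; the matching lower bound comes from a modification of the construction in Proposition~\ref{prop:girth6} to girth $5$. Argue by contradiction: let $G$ be a vertex-minimum planar graph with girth at least $5$ and $\Delta \ge \Delta_0$ such that $\chi^2(G) > \Delta+2$. By minimality, every proper subgraph of $G$ admits a $(\Delta+2)$-coloring of its square. The strategy is the one pioneered for sparse planar graphs in~\cite{DvorakKNS08} and~\cite{BorodinI09girth6}: compile a list of reducible configurations and use discharging to show at least one must appear in $G$.

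First, I would build a long list of reducible configurations. A configuration is reducible if one can delete a subset $S \subseteq V(G)$, invoke minimality to $(\Delta+2)$-color $(G-S)^2$, and then extend the coloring to the vertices of $S$. Because $G$ has girth at least $5$, for a vertex $v$ the sets $N_G(u)\setminus\{v\}$ with $u\sim v$ are pairwise disjoint and disjoint from $N_G(v)$, so the number of colors forbidden at a vertex $v$ of degree $d$ by its already-colored neighbors in $G^2$ is at most $d + \sum_{u\sim v}(d(u)-1)$. With only $\Delta+2$ colors available this is extremely tight: greedy extension succeeds immediately only at very constrained vertices, typically $2$-vertices whose two neighbors have small combined degree, or on short subgraphs of low-degree vertices where Hall's theorem or the Combinatorial Nullstellensatz guarantees a simultaneous extension.

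Next I would run the discharging argument. Assign each vertex $v$ initial charge $d(v)-4$ and each face $f$ initial charge $\ell(f)-4$, so that by Euler's formula the total charge is $-8$. The girth-$5$ hypothesis makes every face's initial charge non-negative, and only vertices of degree $2$ and $3$ contribute negative charge. Design rules that move charge from high-degree vertices (and from long faces) to their low-degree neighbors. Assuming none of the listed reducible configurations appear in $G$, one verifies that every vertex and every face ends with non-negative charge, contradicting the global total of $-8$.

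The main obstacle is that $\Delta+2$ is only one more than the trivial lower bound $\Delta+1$, so the coloring extensions have essentially no slack. A single $2$-vertex whose two neighbors both have degree close to $\Delta$ already forbids nearly $2\Delta$ colors in $G^2$, far more than $\Delta+2$, so such configurations cannot be handled in isolation; they must be grouped with their neighborhood and treated via a more delicate argument that recolors a small window simultaneously. Calibrating the reducibility list so that (i) each listed configuration genuinely admits a coloring extension and (ii) the discharging rules leave no vertex or face in deficit is where the bulk of the technical work lies. The hypothesis $\Delta \ge \Delta_0$ enters in two places: heavy vertices must have enough surplus charge to cover the low-degree deficit once the worst configurations are excluded, and the extension arguments need enough palette flexibility at high-degree neighbors to make those configurations \emph{actually} reducible.
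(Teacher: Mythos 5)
Your proposal is a plan rather than a proof: the entire content of the theorem --- exhibiting a set of reducible configurations and verifying that a minimal counterexample must contain one of them --- is deferred to ``calibrating the reducibility list,'' so nothing is actually established. More importantly, the specific architecture you propose (a finite list of bounded-size reducible configurations, verified by discharging with charges $d(v)-4$ and $\ell(f)-4$) is exactly the approach one would try first, and the published proof of Bonamy, Cranston, and Postle~\cite{BonamyCP} strongly suggests it cannot be completed. The obstacle you correctly identify --- a $2$-vertex whose two neighbors both have degree near $\Delta$ sees up to $2\Delta$ forbidden colors in $G^2$ --- is not resolved by ``recoloring a small window'': a vertex $v$ of degree $\Delta$ may have many such $2$-neighbors, shared among several high-degree partners $w$, and the substructure that must be recolored to free a color at one of them grows with $\Delta$. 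It is not a bounded configuration at all, so no finite reducibility list of the kind used in~\cite{DvorakKNS08} for girth $6$ will capture it.

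The actual proof handles this by introducing a class of \emph{arbitrarily large} reducible configurations, called regions, consisting of two high-degree vertices $v$ and $w$ together with all of their low-degree neighbors lying on $v,w$-paths of length at most $3$; proving reducibility of such an unbounded structure requires a genuinely different, more global extension argument than greedy coloring or a bounded Nullstellensatz computation. With these large configurations in hand, the authors dispense with discharging entirely, using only the fact that planar graphs have average degree less than $6$, and they obtain the stronger conclusion $\chiAT^2(G)\le\Delta+2$ rather than just $\chi^2(G)\le\Delta+2$. (A minor additional remark: no modification of Proposition~\ref{prop:girth6} is needed for the lower bound, since the girth-$6$ construction there already has girth at least $5$.) So the gap is not merely that details are omitted; the face--vertex discharging framework with bounded configurations is the one the authors had to abandon.
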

Specifically, Bonamy et al.~showed that if $G$ is
a planar graph with girth at least 5 and $\Delta\ge 2,689,601$, then
$\chiAT^2(G)\le \Delta+2$.  The proof was surprising.  While it did rely on
reducible configurations, it did not use discharging (only that planar graphs
have average degree less than 6).  In fact, the key idea was a class of
arbitrarily large reducible configurations called ``regions'' that essentially
consist of two high degree vertices, $v$ and $w$, and all of their low degree
neighbors that lie on $v,w$-paths of length at most 3.

We should mention one other variant of this problem.  To avoid the
construction in Figure~\ref{fig:wegner} it is essential to forbid 4-cycles, but
it is not really necessary to forbid 3-cycles.
Zhu, Lu, Wang, and Chen~\cite{ZhuLWC} were the first to consider $\chi^2(G)$,
when $G$ is a planar graph with no 4-cycle and no 5-cycle.  They proved
$\chi^2(G)\le \Delta+5$ when $\Delta\ge 9$.  
Cranston and Jaeger~\cite{CranstonJ} strengthened this bound, showing that
$\col^2(G)\le \Delta+3$, when $\Delta\ge 32$. 
Zhu, Gu, Sheng, and L\"{u}~\cite{ZGSL} showed further that
$\chil^2(G)\le\Delta+3$ when $\Delta\ge 26$.  Finally, Dong and Xu~\cite{DX}
showed that $\chi^2(G)\le \Delta+2$ when $\Delta\ge 185,760$.

All the results above forbid both 4-cycles and 5-cycles, but we can
also think about just forbidding 4-cycles.  (Recall that Figure~\ref{fig:wegner}
shows that we must forbid 4-cycles.)
Wang and Cai~\cite{WC} appear to have studied this problem first.  They proved
that if $G$ is a planar graph with no 4-cycle, then $\chi^2(G)\le \Delta+48$.
Choi, Cranston, and Pierron~\cite{CCP} considered the analogous problem for
the coloring number and sharpened the bound when $\Delta$ is large.
\begin{theorem}[\cite{CCP}]
If $G$ is a planar graph with no 4-cycle, then $\col^2(G)\le \Delta+73$.
If also $\Delta$ is sufficiently large\footnote{\mbox{This proof
shows $\Delta\ge 6\cdot\! 10^{14}$ suffices; but its authors made little
effort optimizing the bound on $\Delta$.}}, then $\chiAT^2(G)\le
\Delta+2$.  
\end{theorem}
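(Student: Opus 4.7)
My plan is to prove both statements by the discharging method, exploiting the structural restriction that $G$ is planar and contains no 4-cycle. For the first statement, I would produce a vertex ordering $v_1, \ldots, v_n$ such that, writing $G_i := G[\{v_i, \ldots, v_n\}]$, each vertex has $d_{G_i^2}(v_i) \le \Delta + 72$. Since $d_{G_i^2}(v) \le \sum_{w \sim v \text{ in } G_i} d_{G_i}(w)$, the goal is to always find a vertex whose neighbors' degrees sum to at most $\Delta + 72$. First I would establish a density bound: since faces have length $3$ or at least $5$, and any two triangles must be edge-disjoint (otherwise they form a 4-cycle across a shared edge), the number of 3-faces is at most $|E|/3$, and combining with Euler's formula yields $\mad(G) < 30/7$, and in particular $\delta \le 4$ in every subgraph. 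I would then set up initial charges $\mu(v) = d(v) - 30/7$, design discharging rules that push charge from high-degree vertices (of degree exceeding some threshold $T$) to nearby low-degree vertices, and show that if no ``reducible'' configuration exists then some vertex ends with positive charge, contradicting $\mad < 30/7$. The constant $73$ comes from optimizing $T$ and allowing enough slack across the different low-degree cases.

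For the stronger bound $\chiAT^2(G) \le \Delta + 2$ when $\Delta$ is sufficiently large, the $\tfrac{9}{5}\Delta$-degeneracy barrier of Proposition~\ref{degen-prop} rules out any purely coloring-number argument, so the ordering approach above cannot suffice. I would instead adapt the ``region'' approach of Bonamy, Cranston, and Postle from the girth-$5$ setting: show that a minimum counterexample cannot contain a pair of high-degree vertices together with all the low-degree vertices lying on short paths between them (this is the reducible ``region''), and then derive a contradiction from $\mad(G) < 30/7$ by a discharging argument that treats each region as a single aggregated charge sink. The reduction inside each region is proved by explicitly exhibiting a good Alon--Tarsi orientation of $G^2$ restricted to the region, which extends to any Alon--Tarsi orientation chosen outside. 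The $C_4$-free condition is useful here because non-adjacent vertices share at most one common neighbor, which controls the cliques that appear in $G^2$ from length-$2$ paths.

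The main obstacle, I expect, is the Alon--Tarsi analysis in the presence of triangles. In the girth-$5$ setting, triangle-freeness lets one classify the local structure of a region cleanly into paths and cycles; here, triangles permit two high-degree vertices to sit in a $K_4$-like cluster of $G^2$, and enumerating the even versus odd Eulerian subgraphs across such clusters requires a far more delicate orientation design. Absorbing the resulting error terms in the discharging argument while also covering all triangle-rich configurations is presumably what forces the threshold on $\Delta$ up to around $6 \cdot 10^{14}$.
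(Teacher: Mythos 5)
A preliminary caveat: this survey states the theorem only as a citation to~\cite{CCP} and contains no proof of it, so there is no in-paper argument to compare against; I am judging your plan on its own terms and against what the survey records about the relevant techniques. Your high-level architecture --- degeneracy via discharging for $\col^2(G)\le\Delta+73$, and an adaptation of the Bonamy--Cranston--Postle ``regions'' method of~\cite{BonamyCP} for $\chiAT^2(G)\le\Delta+2$ --- is the right skeleton. But the way you propose to close both arguments cannot work. In each part you plan to reach the final contradiction from exactly two hypotheses: $G$ has no 4-cycle, and $\mad(G)<\frac{30}{7}$ (vertex charges $d(v)-\frac{30}{7}$ summing to a negative total). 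Any argument using only those two hypotheses applies verbatim to Charpentier's graphs $G_{C,D}$ of Theorem~\ref{charpentier-thm}: they satisfy $\mad(G_{C,D})<4-\frac{2}{C+1}<\frac{30}{7}$, they contain no 4-cycle (every pair of vertices of the graph in Figure~\ref{charpentier-fig} has at most one common neighbor, as a direct check shows), and yet $\chi^2(G_{C,D})\ge\Delta(G_{C,D})+C+1$, whence $\col^2$ and $\chiAT^2$ also exceed $\Delta+C+1$ with $C$ arbitrary. So planarity must enter more strongly than through a bound on $\mad$: concretely, via discharging over the vertices \emph{and faces} of a plane embedding, or, in the regions argument, via the fact that the auxiliary graph obtained by contracting regions is itself planar (a minor of $G$) and hence sparse. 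The survey's remark that~\cite{BonamyCP} uses ``only that planar graphs have average degree less than 6'' refers to that contracted graph, not to $G$; the survey itself points out that the girth-5 theorem is false under the weaker hypothesis $\mad<\frac{10}{3}$, for exactly this reason. Your phrase about treating each region as an ``aggregated charge sink'' gestures in the right direction, but the sparsity you must invoke is that of the contraction, not $\mad(G)<\frac{30}{7}$.

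A second, smaller gap concerns the degeneracy bookkeeping. You write $d_{G_i^2}(v_i)\le\sum_{w\sim v_i}d_{G_i}(w)$ with $G_i=G[\{v_i,\dots,v_n\}]$, which conflates $(G_i)^2$ with $G^2[\{v_i,\dots,v_n\}]$. For $\col(G^2)$ you must bound the number of \emph{later} vertices within distance two of $v_i$ \emph{in $G$}, and this includes pairs of later vertices joined only through an earlier vertex $v_j$ with $j<i$; a low-degree vertex in the square of the induced subgraph does not certify a low back-degree in $G^2$. The survey's own warm-up computation showing $\col^2(G)\le 9\Delta+1$ for planar $G$ spells out precisely this correction term, and any correct proof of the $\Delta+73$ bound must carry an analogous term through the discharging. (Your density computation itself is fine modulo the standard reduction to 2-connected plane graphs so that every face of length 4 is genuinely a 4-cycle.)
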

The same authors~\cite{CCP} considered the more general problem of coloring the
square of a planar graph $G$ with no cycle having length in some finite set $S$.
They showed that there exists a constant $C_S$ such that always $\chi^2(G)\le
\Delta+C_S$ if and only if $4\in S$.

%\bigskip

Since Conjectures~\ref{conj:wang-lih} and~\ref{conj:girth5} are now completely
resolved, it is natural to extend this line of research as follows.  For every
maximum degree $\Delta$ and girth $g$, what is the minimum constant
$C_{\Delta,g}$ such that $\chi^2(G)\le \Delta+C_{\Delta,g}$ for each planar
graph $G$ with maximum degree $\Delta$ and girth at least $g$?  The same
question can be asked for $\chil^2$.  For this problem, the case $\Delta=3$ has
received more attention than any other; see Figure~\ref{planar-girth-fig}.

Let $G$ be a connected graph, possibly non-planar, with $\Delta=3$. 
Wegner showed that if $G$ is not the Petersen graph, then $\chi^2(G)\le 8$. 
Subsequently, this was strengthened~\cite{CranstonK08, CranstonR13paint} to
$\chil^2(G)\le 8$ and eventually $\chiAT^2(G)\le 8$.  
For non-planar $G$, this is sharp, as witnessed by the Wagner graph, which is
formed from the 8-cycle by adding an edge joining each pair of vertices at
distance 4 on the cycle.  Thus, we seek additional hypotheses on $G$ that
imply better upper bounds on $\chi^2(G)$ and $\chil^2(G)$.  Recall that Wegner
conjectured that $\chi^2(G)\le 7$ for every planar subcubic graph.  
As noted above, this conjecture was proved by Thomassen~\cite{thomassen-wegner3}
and independently by Hartke, Jahanbekam, and Thomas~\cite{HJT}.
A natural next step is planar graphs of higher girth.  

\begin{figure}[!h]
\centering
\begin{tabular}{r|cccccccc}
$\chil^2\le$ & 8 & 7 & 6 & 5 & 4 & 3 \\
\hline
girth $\ge$ & 3 & 6 & 9 & 13 & 24 & $-$
\end{tabular}
\caption{Pairs $(k,g_k)$, in columns, such that $\chil^2(G)\le k$ if $G$ is a planar
graph with maximum degree $\Delta=3$ and girth at least $g_k$.  (The entry $(3,-)$
denotes that no such pair $(3,g_3)$ exists. References are in the following 
paragraph.)\label{planar-girth-fig}}
\end{figure}

Let $G$ be a planar graph with $\Delta=3$ and girth $g$.
Cranston and Kim~\cite{CranstonK08} showed that $\chil^2(G)\le7$ if $g\ge 7$. 
More recently, Kim and Lian~\cite{KL2} showed that $\chil^2(G)\le 7$ if $g\ge 6$.
This same bound was proved by Jin, Kang, and Kim~\cite{JKK} for graphs with no
4-cycles and no 5-cycles; and by Kim, Lian, Nakamoto, and Ozeki~\cite{KLNO} for
graphs with no 5-cycles.
Dvo\v{r}\'{a}k, \v{S}krekovski, and Tancer~\cite{DvorakST08} showed that 
(i) $\chil^2(G)\le 6$ if $g\ge 10$,
and (ii) $\chil^2(G)\le 5$ if $g\ge 14$,
and (iii) $\chil^2(G)=4$ if $g\ge 24$ (note that (iii) was also
proved in~\cite{BIN07}, mentioned above).
Their first result was strengthened by
Cranston and Kim~\cite{CranstonK08} and also Havet~\cite{Havet09}; both groups
showed that if $g\ge 9$, then $\chil^2(G)\le 6$.  
This was recently strengthened further by Kim and Luo~\cite{KL25}, who showed that 
$\chil^2(G)\le 6$ when $G$ has no cycles of lengths 4 through 8 (but 3-cycles are allowed).
The second result of~\cite{DvorakST08} was
strengthened by Havet, who showed that if $g\ge 13$, then $\chil^2(G)\le 5$.
Since always $\chi^2(G)\ge \Delta+1$, no restriction on girth implies
$\chi^2(G)\le 3$.  In particular, $\chi^2(K_{1,3})=4$.
For result (iii), Borodin and Ivanova slightly weakened the hypothesis for 
coloring (rather than list coloring) needed to guarantee $\chi^2(G)=4$.
In 2010 they showed~\cite{BorodinI11DAIO} that $g\ge 23$ suffices; 
in 2012 they strengthened~\cite{BorodinI12DMGT} this to $g\ge 22$. 
In 2021 La and Montassier~\cite{LM-girth21} extended this to $g\ge 21$.

In addition to the results mentioned above, various authors have conjectured that $\chi^2(G)\le 6$
if $G$ is planar and subcubic with some additional cycle length forbidden.  For easy reference, we gather
all of these statements into a single conjecture.

\begin{conjecture}[\cite{DvorakST08,HJT,FHS}]
	\label{subcubic-planar6-conj}
	If $G$ is planar and subcubic, then $\chi^2(G)\le 6$ when either (i) $G$ has no 
	3-cycle~\cite{DvorakST08}, (ii) $G$ can be drawn in the plane with no 5-face~\cite{HJT}, 
	or (iii) $G$ is bipartite~\cite{DvorakST08,FHS}.
\end{conjecture}
\noindent
Relatively little progress has been made on Concecture~\ref{subcubic-planar6-conj}.  However, \cite{FHS} proved 
that $\chi^2(G)\le 4$ if $G$ is planar and subcubic with each face having length a multiple of 4.  And for a 
slightly more general class of graphs they proved the bound $\chi^2(G)\le 6$.

To conclude this section,
we remark briefly about squares of cycles.
Note that $(C_5)^2=K_5$, so $\chi^2(C_5)=\chil^2(C_5)=\chiAT^2(C_5)=5$.
Now consider $C_k$ for some $k\ne 5$.  If $3|k$, then
$\chiAT^2(C_k)=3$; otherwise $\chiAT^2(C_k)=4$.  Proving the bound for coloring is
easy.  The bound $\chiAT^2(C_k)\le 4$ follows from Brooks'
Theorem~\cite{HladkyKS10}.  When $3\nmid k$, the independence number of $C_k^2$ is
less than $k/3$, so $\chi^2(C_k)=4$.  When $6|k$, Juvan, Mohar, and
\v{S}krekovski~\cite{JuvanMS98} showed that $\chil^2(C_k)=3$.  Using ideas
from~\cite[Lemma 16]{CranstonR13paint} this result can be strengthened to prove
that if $3|k$, then $\chiAT^2(C_k)=3$. 

\subsection{Graph Classes with Bounded Maximum Average Degree}
\label{sec:mad}

Nearly all results in Sections~\ref{sec:wegner} and
\ref{sec:planar-girth} are
proved using the discharging method.  (One notable exception is 
%the result of Havet, van den Heuvel, McDiarmid, and Reed~\cite{HavetHMR}, 
Theorem~\ref{HvdHMR-thm}, which proves
Wegner's conjecture asymptotically; it instead uses the probabilistic method,
which we discuss at the end of Section~\ref{erdos-nesetril-sec}.)
Discussing this technique in depth is outside the scope of this survey. 
However, for the interested reader, we recommend \emph{An Introduction to the
Discharging Method via Graph Coloring}~\cite{CranstonW-guide}.

Many discharging proofs for planar graphs, particularly sparse planar graphs,
use planarity in only a very weak sense: to bound the number of edges in any induced
subgraph.  This observation leads to the notion of \emph{maximum average
degree}, denoted \EmphE{$\mad(G)$}{-.5mm}, and defined as
$
{\mad(G)} := \max_{H\subseteq G, |V(H)|\ge 1}2|E(H)|/|V(H)|.
$
Forests are precisely the class of graphs $G$ with $\mad(G)<2$.  
Since every $n$-vertex planar graph has at most $3n-6$ edges, it
has average degree less than 6.  Since planar graphs are hereditary, 
every planar graph $G$ has $\mad(G)<6$.  More generally, we have the following
observation.

\begin{lemma}[Folklore]
If $G$ is a planar graph with girth at least $g$, then $\mad(G)<\frac{2g}{g-2}$.
\label{lem:folklore}
\end{lemma}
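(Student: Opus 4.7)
The plan is to reduce to a setting where Euler's formula applies cleanly. First, every subgraph $H\subseteq G$ is itself planar with girth at least $g$, since neither planarity nor girth-at-least-$g$ can be destroyed by edge or vertex deletion. So it suffices to show that every planar graph $H$ with girth at least $g$ satisfies $\frac{2|E(H)|}{|V(H)|}<\frac{2g}{g-2}$.

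I would first dispense with the easy range: if $\frac{2|E(H)|}{|V(H)|}<2$, the desired bound is immediate because $g\ge 3$ forces $\frac{2g}{g-2}>2$. Otherwise, iteratively deleting vertices of degree at most $1$ weakly increases the ratio $\frac{2|E(\cdot)|}{|V(\cdot)|}$ (a short calculation confirms this, under the running assumption that the ratio is already at least $2$), so I may replace $H$ by a planar subgraph $H^*$ of girth at least $g$ and minimum degree at least $2$ whose ratio is no smaller. Next, embed $H^*$ in the plane with $n$ vertices, $m$ edges, $f$ faces, and $c$ connected components, and apply Euler's formula $n-m+f=1+c\ge 2$. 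Double-counting edge--face incidences yields $\sum_F \ell(F)=2m$, where $\ell(F)$ denotes the length of the closed walk bounding face $F$. Granted that $\ell(F)\ge g$ for every face, we obtain $gf\le 2m$, which combined with Euler gives
\[
m(g-2)\le g(n-c-1)<gn, \qquad \text{so} \qquad \frac{2m}{n}<\frac{2g}{g-2}.
\]

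The main obstacle is verifying $\ell(F)\ge g$ for every face of $H^*$. For 2-edge-connected plane graphs this is immediate, since every face boundary is a simple cycle and hence has length at least the girth. In general, a bridge forces some face boundary to retrace edges, producing a closed walk rather than a simple cycle. The minimum-degree-at-least-$2$ assumption on $H^*$, however, rules out pendant trees, so every face boundary walk must include a simple cycle of $H^*$ coming from some incident 2-edge-connected block. Decomposing the associated Eulerian multigraph into edge-disjoint cycles and noting that at least one such cycle is a genuine cycle of $H^*$ (rather than a doubled bridge) then forces the walk length to be at least $g$, completing the argument.
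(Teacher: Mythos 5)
Your proof is correct and follows the same route as the paper's: restrict to a subgraph, sum face lengths to get $g|F(H)|\le 2|E(H)|$, and combine with Euler's formula. The extra work you do --- discarding low-density subgraphs, passing to minimum degree at least $2$, and verifying that every face boundary walk really has length at least $g$ even in the presence of bridges --- patches exactly the degenerate cases (forests, pendant trees, disconnected graphs) that the paper's two-line folklore argument silently elides.
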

\begin{proof}
We sum the sizes of all $|F(G)|$
faces, which gives the inequality $2|E(G)|\ge g|F(G)|$.  Substituting this
inequality into Euler's formula, and solving for $\ad(G)$, the average degree of $G$,
yields $\ad(G)<\frac{2g}{g-2}$.  Each subgraph $H$ is also planar, with
girth at least $g$, so we get the desired result.
\end{proof}

Now $\mad(G)$ gives a natural way to strengthen and refine results about coloring
planar graphs, specifically with high girth.  One advantage of 
using $\mad$ is that we are no longer restricted to those values of $\mad$
that arise from Lemma~\ref{lem:folklore} when $g$ is an integer.

\captionsetup{width=.75\linewidth}
\renewcommand{\arraystretch}{1.4}
\begin{figure}[!ht]
\centering
\begin{tabular}{c|cccccccc}
$\Delta $ & 4 & 5 & $\Delta_{\epsilon}$ & 17 & $\Delta_{\epsilon,c}$ & 8 \\ \hline
$\mad(G)$ & $16/7$ & $2+\frac{4\Delta-8}{5\Delta+2}$ & $3-\epsilon$ & 3 &
$4-\frac{4}{c+1}-\epsilon$ & 4 \\
%\hline
$\chil^2$ & 
$\Delta+1$ & 
$\Delta+1$ & 
$\Delta+1$ & 
$\Delta+2$ & 
$\Delta+c$ & 
$3\Delta+1$
\end{tabular}
\caption{Triples $(D,k,C)$ such that every graph $G$ with 
$\Delta(G)\ge D$ and $\mad(G)<k$ 
satisfy $\chil^2(G)\le C$. References appear below (and near the end of the
section).}
%graph with maximum degree $\Delta=3$ and girth at least $g_k$.  (The entry $(3,-)$
%denotes that no such pair $(3,g_3)$ exists. References are in the preceeding few
%paragraphs.)}
\end{figure}

A natural first question, in view of Section~\ref{sec:planar-girth},
is whether
Conjectures~\ref{conj:wang-lih} and~\ref{conj:girth5} remain true when the
hypotheses are relaxed to $\mad(G)<\frac{2g}{g-2}$.  Dolama and
Sopena~\cite{DolamaS} proved that $\chi^2(G)=\DeltaG+1$ when $\DeltaG\ge 4$
and $\mad(G)<\frac{16}7$, which includes planar graphs with girth at least 16;
see also~\cite{CES}.  Cranston and
\v{S}krekovski~\cite{CranstonS} proved that $\chil^2(G)=\DeltaG+1$ whenever
$\DeltaG\ge 5$ and $\mad(G)<2+\frac{4\DeltaG-8}{5\DeltaG+2}$.
In particular, this includes all planar graphs with girth at least
$7+\frac{12}{\DeltaG-2}$.  Hence, it proves the analogue of
Conjecture~\ref{conj:wang-lih} for $g\ge 8$.  The case $g\ge 7$ was subsumed by
Bonamy, L\'{e}v\^{e}que, and Pinlou~\cite{BLP1} and strengthened further in~\cite{BLP2}.  
They proved that for each $\epsilon>0$ there exists $\Delta_\epsilon$ such that
if $G$ is a graph with $\mad(G)<3-\epsilon$ and $\DeltaG\ge \Delta_\epsilon$,
then $\chil^2(G)=\DeltaG+1$.  
% this next line seems to be wrong
%As $\epsilon\to 0$, the value of $\Delta_\epsilon$ asymptotically approaches $\frac{40}{\epsilon}$.
So Conjecture~\ref{conj:wang-lih}, with girth at least 7, holds in a much more
general context, requiring only $\mad<3-\epsilon$, for any $\epsilon>0$.
Note that this is sharp, since, as we saw in Proposition~\ref{prop:girth6}, 
there exist planar graphs $G$ with girth 6 (and hence $\mad<3$) such that
$\chi^2(G)=\DeltaG+2$.  One of the prettiest results in this area is the
following, due to Bonamy, L\'{e}v\^{e}que, and Pinlou~\cite{BLP3}.

\begin{theorem}[\cite{BLP3}]
\label{BLP3-thm}
If $G$ is a graph with $\mad(G)<3$ and $\DeltaG\ge 17$, then $\chil^2(G)\le
\DeltaG+2$.
\label{thm:BLP17}
\end{theorem}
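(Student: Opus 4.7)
The plan is to argue by contradiction using the discharging method, the standard tool throughout this section. Let $G$ be a counterexample minimizing $|V(G)|+|E(G)|$: there is a $(\Delta+2)$-list assignment $L$ for which $G^2$ has no $L$-coloring, while every proper subgraph $H$ with $\Delta(H)\le\Delta$ and $\mad(H)<3$ satisfies $\chil^2(H)\le\Delta(H)+2$. The argument splits into two pieces: a reducibility part identifying local configurations that cannot appear in $G$, and a discharging part showing that the sparsity hypothesis $\mad(G)<3$ forces at least one such configuration to appear.

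For reducibility, the template is as follows. Given a candidate configuration in $G$, delete a carefully chosen set of vertices and edges to obtain a graph $G'$ with $\Delta(G')\le\Delta$ and $\mad(G')\le\mad(G)<3$. By minimality, $(G')^2$ admits a proper $L$-coloring, and we try to extend this back to $G^2$. The number of free colors at each uncolored vertex $v$ is $\Delta+2$ minus the number of distinct colors already used on vertices at distance at most $2$ from $v$ in $G$; success then reduces to a greedy argument (or a Hall-type argument on the uncolored vertices), typically in an order dictated by increasing number of uncolored $G^2$-neighbors. Natural reducible configurations to expect include pendant vertices, adjacent pairs of $2$-vertices, $2$-vertices both of whose $G$-neighbors have bounded degree, and short threads of $2$-vertices joining moderate-degree endpoints.

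For discharging, assign initial charge $\mu(v):=d(v)-3$; the hypothesis $\mad(G)<3$ gives $\sum_{v\in V(G)}\mu(v)<0$. One then devises rules that send charge from vertices of degree at least $4$ to nearby $2$-vertices and $3$-vertices (and to the $2$-vertex threads incident to them), and verifies that the final charge $\mu^*(v)$ is nonnegative at every vertex, using the reducible configurations to bound the number of demanding light neighbors that a heavy vertex can have. This yields the contradiction $0>\sum_v\mu(v)=\sum_v\mu^*(v)\ge 0$.

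The main obstacle will be the reducibility analysis around $2$-vertices. In $G^2$ a $2$-vertex whose two $G$-neighbors both have degree $\Delta$ has degree up to $2\Delta-2$, so $\Delta+2$ colors leave only a narrow margin; the extension argument must exploit coincidences among its second-neighbors and must be compatible with a discharging scheme in which no heavy vertex is asked to support too many light dependents at once. Calibrating the reducibility and discharging sides so that $\Delta\ge 17$ is exactly sufficient, rather than some larger threshold, is the delicate quantitative core of the proof.
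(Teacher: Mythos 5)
The survey does not prove this theorem; it only states it with a citation to Bonamy, L\'{e}v\^{e}que, and Pinlou~\cite{BLP3}, so there is no in-paper proof to match your argument against. What you have written does correctly identify the method family used in~\cite{BLP3}: a minimum counterexample, reducible configurations whose deletion preserves $\mad<3$ and whose colorings extend, and a discharging argument with initial charge $d(v)-3$. As a description of the genre, it is accurate.

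But as a proof it has a genuine and essentially total gap: every piece of mathematical content is deferred. You list configurations that are ``natural to expect'' rather than proving any specific configuration reducible; you describe discharging rules only as something ``one devises''; and you explicitly concede that ``calibrating the reducibility and discharging sides so that $\Delta\ge 17$ is exactly sufficient \ldots is the delicate quantitative core of the proof'' --- which is to say, the proof. The actual argument in~\cite{BLP3} runs to many pages, establishes a substantial catalogue of specific reducible configurations (built around threads of $2$-vertices and the precise degrees of their endpoints), and crucially uses a \emph{global} discharging scheme in which charge is transported along arbitrarily long paths of $2$-vertices, not merely between a heavy vertex and its immediate light neighbors; your sketch of purely local rules would not close the case analysis at the threshold $\Delta\ge 17$. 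One smaller point: in your minimality setup you should fix $\Delta:=\Delta(G)$ once and state the induction hypothesis as ``every proper subgraph is $L$-colorable for every $(\Delta+2)$-list assignment,'' since a subgraph's own maximum degree may drop and the bound $\Delta(H)+2$ you wrote would then be the wrong (smaller) list size. As it stands, the proposal is a plan for a proof, not a proof.
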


This theorem generalizes that of Dvo\v{r}\'{a}k et~al.\ for girth 6, and also its
strengthenings by Borodin et~al., mentioned above.
At this point, it would be natural to guess that Theorem~\ref{thm:BLP17} extends
to the more general case $\mad(G)<\frac{10}3$ (the bound for planar graphs with
girth 5), at least for $\DeltaG$ sufficiently large.  But this generalization
is false.  Charpentier~\cite{charpentier, BLP2} constructed, for each positive
integer $C$, a family of graphs $G$ with unbounded maximum degree that each
has $\mad(G)<4-\frac{2}{C+1}$, and $\chi^2(G)=\DeltaG+C+1$.  Taking $C=2$
yields a class of counterexamples to our purported generalization. 

(Our proof uses the following easy observation. A fractional orientation of a
graph $G$ with maximum indegree $\alpha$, for any
$\alpha>0$, proves that $\mad(G)\le 2\alpha$.  To see this, note that the
maximum in the definition of $\mad(G)$ can be restricted to induced subgraphs
$H$.  For each such $H$, we have $|E(H)|= \sum_{v\in V(H)}d^+_H(v)\le
|V(H)|\alpha$.  Thus, $\mad(G)\le 2\alpha$.)

\begin{figure}[!t]
\centering
\begin{tikzpicture}[thick, scale=1.30]
\tikzstyle{uStyle}=[shape = circle, minimum size = 5.5pt, inner sep = 0pt,
outer sep = 0pt, draw, fill=white, semithick]
\tikzstyle{usStyle}=[shape = circle, minimum size = 4.5pt, inner sep = 0pt,
outer sep = 0pt, draw, fill=black, semithick]
\tikzset{every node/.style=uStyle}

\clip (-2,7.5) rectangle + (10.2,-7.1);

\foreach \name/\x/\y in {v1/0/5, v2/0/4, vD/0/1, w1/6/5, w2/6/4, wC/6/1, x/-1/3, y/7/3}
\draw (\x,\y) node (\name) {};
\draw (v1) -- (w1) node[pos = .5, usStyle] {} 
-- (v2) node[pos = .35, usStyle] {}
-- (w2) node[pos = .50, usStyle] {} 
-- (v1) node[pos = .65, usStyle] {} (vD)
-- (wC) node[pos = .50, usStyle] {} (v1)
-- (wC) node[pos = .35, usStyle] {}
-- (v2) node[pos = .65, usStyle] {} (w1) 
-- (vD) node[pos = .35, usStyle] {}
-- (w2) node[pos = .65, usStyle] {}
(vD) -- (x) -- (v2) (x) -- (v1) 
(wC) -- (y) -- (w2) (y) -- (w1) 
;
\draw[dotted, thick] (v2) ++ (0,-.5cm) -- (0,1.5cm);
\draw[dotted, thick] (w2) ++ (0,-.5cm) -- (6,1.5cm);
\draw (x) edge[bend left, out = 90, in = 90, looseness=1.75] (y);

\foreach \lab\where in {v_1/v1, v_2/v2, w_1/w1, w_2/w2}
\draw (\where) ++ (0,3mm) node[draw=none, fill=none] {\footnotesize{$\lab$}};
\foreach \lab\where in {v_C/vD, w_D/wC, x~/x, ~y/y}
\draw (\where) ++ (0,-3mm) node[draw=none, fill=none] {\footnotesize{$\lab$}};
\end{tikzpicture}
\captionsetup{width=.65\linewidth}
\caption{A graph $G_{C,D}$ with $\Delta(G_{C,D})=D+1$ and $\chi^2(G_{C,D})\ge
\Delta(G_{C,D})+C+1$ and $\mad(G_{C,D})<4-\frac{2}{C+1}$, as in
Theorem~\ref{charpentier-thm}.\label{charpentier-fig}}
\end{figure}

\begin{theorem}[\cite{BLP2}]
For all positive integers $C$ and $D$ with $C\le D$, there exists a graph
$G_{C,D}$ with
$\Delta(G_{C,D})=D+1$ and $\chi^2(G_{C,D})\ge \Delta(G_{C,D})+C+1$ and
$\mad(G_{C,D})<4-\frac{2}{C+1}$.
\label{charpentier-thm}
\end{theorem}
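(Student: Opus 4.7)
\emph{Proof plan.} I would take as $G_{C,D}$ the graph drawn in Figure~\ref{charpentier-fig}: start from the complete bipartite graph on parts $\{v_1,\ldots,v_C\}$ and $\{w_1,\ldots,w_D\}$, subdivide each of the $CD$ edges once (let $u_{i,j}$ denote the vertex subdividing $v_iw_j$), then add a vertex $x$ adjacent to every $v_i$, a vertex $y$ adjacent to every $w_j$, and the edge $xy$.

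The degree count is by inspection: subdivision vertices have degree $2$, each $v_i$ has degree $D+1$, each $w_j$ has degree $C+1$, and $\deg(x)=C+1$, $\deg(y)=D+1$; since $C\le D$, we get $\Delta(G_{C,D})=D+1$. For the chromatic number I exhibit a clique of size $C+D+2$ in $G_{C,D}^2$: in $G_{C,D}$ every pair of vertices in $\{x,y,v_1,\ldots,v_C,w_1,\ldots,w_D\}$ is within distance $2$. Each pair of $v_i$'s shares neighbor $x$; each pair of $w_j$'s shares $y$; each pair $(v_i,w_j)$ shares $u_{i,j}$; and $x$ reaches each $w_j$ via $y$ (and symmetrically for $y$ and the $v_i$'s). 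Hence $\chi^2(G_{C,D})\ge\omega(G_{C,D}^2)\ge C+D+2=\Delta(G_{C,D})+C+1$.

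The main work is bounding $\mad(G_{C,D})$. I restrict attention to induced subgraphs (removing an edge only decreases density). Since deleting a leaf strictly increases average degree once that average exceeds $2$, the only relevant induced subgraphs are parameterized by integers $a\in\{0,\ldots,C\}$, $b\in\{0,\ldots,D\}$ and flags $e_x,e_y\in\{0,1\}$: choose any $a$-subset $V'$ of the $v_i$'s, any $b$-subset $W'$ of the $w_j$'s, include exactly the $ab$ subdivision vertices with both endpoints in $V'\cup W'$, and toggle in $x,y$ according to $e_x,e_y$. The resulting graph has $a+b+ab+e_x+e_y$ vertices and $2ab+ae_x+be_y+e_xe_y$ edges.

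I then check in each of the four cases $(e_x,e_y)\in\{0,1\}^2$ that the average degree is strictly less than $4-\frac{2}{C+1}=\frac{4C+2}{C+1}$. Clearing denominators reduces each case to showing that a polynomial that is linear in $a$, linear in $b$, and has $-2ab$ as its bilinear coefficient is positive on the box $0\le a\le C$, $0\le b\le D$; the combination of a positive constant term together with the constraint $a\le C$ dominates the $-2ab$ term. The tight case is $(e_x,e_y)=(0,1)$ with $a=C,\ b=D$, where $|V|=(C+1)(D+1)$ and $|E|=D(2C+1)$ give average degree exactly $\bigl(4-\tfrac{2}{C+1}\bigr)\tfrac{D}{D+1}$, which explains why $4-\tfrac{2}{C+1}$ is the right threshold. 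The chief obstacle is the sign-bookkeeping in this four-case analysis, especially for the sub-cases where $b$ is allowed to exceed $C$; everything else is inspection of the figure.
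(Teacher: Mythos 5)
Your construction, degree count, and clique-in-the-square argument are exactly the paper's; the difference is in how you bound $\mad(G_{C,D})$. The paper does this in one stroke with a fractional orientation: each $v_iz_{ij}$ is oriented into $z_{ij}$, each $z_{ij}w_j$ sends weight $\frac{1}{C+1}$ into $w_j$ and the rest back into $z_{ij}$, and $v_ix$, $w_jy$ are oriented into $v_i$, $w_j$; the maximum (fractional) indegree is then $2-\frac{1}{C+1}$, which immediately gives $\mad\le 4-\frac{2}{C+1}$, with the paper explicitly omitting the extra details needed for strictness. Your route — reducing to the parameterized family of induced subgraphs with no leaves and checking the density inequality in the four cases $(e_x,e_y)$ — is more laborious but entirely sound (in each case the inequality reduces, after clearing denominators, to something of the form $ab<Ca+(\text{positive linear terms})$, which follows from $a\le C$), and it has the advantage of delivering the strict inequality directly rather than deferring it. One slip worth fixing: the extremal case is not $(e_x,e_y)=(0,1)$ with $a=C$, $b=D$. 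The whole graph $(e_x,e_y)=(1,1)$, $a=C$, $b=D$ is in fact denser — e.g.\ for $C=D=1$ your $(0,1)$ subgraph has average degree $\frac32$ while the whole graph has average degree $2$ — and both families approach the threshold $4-\frac{2}{C+1}$ only as $D\to\infty$. This does not affect correctness, since the required inequality holds in all four cases, but the remark "which explains why this is the right threshold" should be attached to the full graph (or to the asymptotics in $D$) rather than to the $(0,1)$ case.
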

\begin{proof}
To form $G_{C,D}$, begin with a complete bipartite graph $K_{C,D}$ with parts
$\{v_1,\ldots,v_C\}$ and $\{w_1,\ldots,w_D\}$; see
Figure~\ref{charpentier-fig}.  Add new vertices $x$ and
$y$, where $N(x):=\{v_1,\ldots,v_C,y\}$ and $N(y):=\{w_1,\ldots,w_D,x\}$.
Finally, subdivide each edge $v_iw_j$ with a new 2-vertex $z_{ij}$.  It is easy
to check that in $G^2_{C,D}$ all of vertices $v_1,\ldots,v_C,w_1,\ldots,w_D,x,y$
form a clique, so $\chi^2(G_{C,D})\ge C+D+2$.  Further, $\Delta(G_{C,D})=D+1$,
so $\chi^2(G_{C,D})\ge \Delta(G_{C,D})+C+1$.

Now we must verify that $\mad(G_{C,D})<4-\frac2{C+1}$.  We fractionally orient
$E(G_{C,D})$ as follows.  
Orient each edge $v_iz_{ij}$ towards $z_{ij}$.  
Orient each $z_{ij}w_j$ with the fraction $\frac1{C+1}$ towards $w_j$ and the
rest towards $z_{ij}$.  
Orient $v_ix$ towards $v_i$ and orient $w_jy$ towards $w_j$.
Orient $xy$ arbitrarily.  This orientation has maximum indegree $2-\frac1{C+1}$,
which proves that $\mad(G_{C,D})\le 2(2-\frac1{C+1})$.  Showing the inequality
is strict requires a few more details, which we omit.
\end{proof}

Bonamy, L\'{e}v\^{e}que, and Pinlou~\cite{BLP2} suggested that Charpentier's construction is sharp.
They asked whether for each $\epsilon>0$ there exists a constant $\Delta_\epsilon$ such
that if $\mad(G)<4-\frac2{c+1}-\epsilon$ and $\DeltaG>\Delta_\epsilon$, then
$\chil^2(G)\le\DeltaG+c$.  (Their result above for
$\mad<3-\epsilon$ proves this for $c=1$.)  In this direction, they
showed~\cite{BLP1} that if $\mad(G)\le 4 - \frac{40}{c+16}$, then $\chil^2(G)\le
\DeltaG+c$.  For $c\ge 3$, Yancey~\cite{Yancey} strengthened this bound.  He
showed that, for each $c\ge 3$ and $\epsilon > 0$, if $\DeltaG$ is
sufficiently large (in terms of $c$ and $\epsilon$) and
$\mad(G)<4-\frac{4}{c+1}-\epsilon$, then $\chil^2(G)\le
\DeltaG+c$.

It is worth noting, in the question above of Bonamy et al., that it is
essential to bound $\mad(G)$ strictly away from 4. 
Charpentier~\cite{charpentier} also posed the following conjecture: There exists a constant
$D$ such that if $G$ has $\Delta\ge D$ and $\mad(G)<4$, then $\chi^2(G)\le
2\Delta$.  This was first disproved by Kim and Park~\cite{KP_charpentier-CE}.
Using ideas similar to those in the proof of
Theorem~\ref{charpentier-thm}, they constructed graphs $G_{\Delta}$ with
$\mad(G_{\Delta})<4$, and maximum degree $\Delta$, but $\chi^2(G_{\Delta})=2\Delta+2$.
The current best known construction, which is due to Hocquard, Kim, and 
Pierron~\cite{HKP} and presented below, has $\chi^2(G_{\Delta})=\frac52\Delta$.

\begin{figure}[!ht]
\centering
\begin{tikzpicture}[scale=1]
\tikzstyle{uStyle}=[shape = circle, minimum size = 5.5pt, inner sep = 0pt,
outer sep = 0pt, draw, fill=white, semithick]
\tikzstyle{lStyle}=[shape = circle, minimum size = 4.5pt, inner sep = 0pt,
outer sep = 0pt, draw, fill=none, draw=none]
\tikzstyle{usStyle}=[shape = circle, minimum size = 3.5pt, inner sep = 0pt,
outer sep = 0pt, draw, fill=black, semithick]
\tikzstyle{usGStyle}=[shape = circle, minimum size = 3.5pt, inner sep = 0pt,
outer sep = 0pt, draw, fill=gray, semithick]
\tikzset{every node/.style=uStyle}
\def\rad{4.5cm}

\foreach \i in {1,2,3,4,5}
\draw (\i*72+18:\rad) node[] (x\i) {};
\draw (0,0) node[draw=none] (origin) {};

% ``outside edges''
\foreach \i/\j in {1/2, 2/3, 3/4, 4/5, 5/1}
{
\draw (x\i) -- (barycentric cs:x\i=5,x\j=5,origin=-.5) node[usStyle] {} -- (x\j);
\draw (x\i) -- (barycentric cs:x\i=5,x\j=5,origin=0) node[usStyle] (y\i) {} -- (x\j);
\draw (x\i) -- (barycentric cs:x\i=5,x\j=5,origin=.55) node[usStyle] {} -- (x\j);
}

% ``diagonals''
\foreach \i/\j in {1/3, 2/4, 3/5, 4/1, 5/2}
{
\draw (x\i) -- (barycentric cs:x\i=3,x\j=3,origin=-.625) node[usStyle] {} -- (x\j);
\draw (x\i) -- (barycentric cs:x\i=3,x\j=3,origin=0) node[usStyle] (z\i) {} -- (x\j);
\draw (x\i) -- (barycentric cs:x\i=3,x\j=3,origin=.785) node[usStyle] {} -- (x\j);
}

\foreach \i/\j in {1/3, 2/4, 3/5, 4/1, 5/2}
{
\draw (y\j) edge[bend left=50] (y\i);
\draw (barycentric cs:y\i=3,y\j=3,origin=-3.51) node[usGStyle] {};
\draw (y\i) edge[bend left=45] (z\j);
}

%%% Shift these vertices along these edges!!
\foreach \i/\j/\k in {1/3/5, 2/4/1, 3/5/2, 4/1/3, 5/2/4}
\draw (barycentric cs:y\i=3,z\j=3.35,y\k=-1.07,origin=2.3) node[usGStyle] {};

\foreach \i/\j in {1/2, 2/3, 3/4, 4/5, 5/1}
{
	\draw (z\i) edge[bend right=10] (z\j);
	\draw (barycentric cs:z\i=3,z\j=3,origin=-.5) node[usGStyle] {};
}

\end{tikzpicture}
\captionsetup{width=.485\textwidth}
\caption{The graph $G_D$ in Theorem~\ref{HKP-thm}.  In $G_D^2$, the black vertices form a clique of
order $5D$.\label{HKP-thm-fig}}
\end{figure}

\begin{theorem}[\cite{HKP}]
For each positive even integer $D$, there exists a 2-degenerate graph $G_D$,
with $\mad(G_D) < 4$, maximum degree $D$, and $\chi^2(G_D)\ge \frac{5D}2$.
(See Figure~\ref{HKP-thm-fig}.)
\label{HKP-thm}
\end{theorem}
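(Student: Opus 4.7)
The plan is to exhibit the construction of Figure~\ref{HKP-thm-fig} explicitly and verify its four properties one at a time. First I would formalize $G_D$: take five ``hubs'' $x_1,\ldots,x_5$ arranged cyclically, join each pair of hubs (both consecutive pairs $\{x_i,x_{i+1}\}$ and diagonal pairs $\{x_i,x_{i+2}\}$) by many internally disjoint paths of length $2$ through fresh $2$-vertices, and insert a few extra length-$2$ paths joining certain distinguished middle vertices $y_i$ on the pentagon sides to other $y_i$'s and to the middle vertices $z_j$ on the diagonals. The multiplicities are tuned, using that $D$ is even, so that each hub has degree exactly $D$ and so that a distinguished set $S$ of $5D/2$ vertices (the black vertices in the figure) is pairwise at distance at most $2$ in $G_D$.

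With $G_D$ in hand, the lower bound $\chi^2(G_D)\ge 5D/2$ follows from the claim that $S$ is a clique in $G_D^2$. Any two hubs already lie on a common subdivision path, hence are at distance $2$; each chosen $2$-vertex of $S$ sits on a length-$2$ path between two hubs and is therefore adjacent in $G_D^2$ to both of them; and the extra $y$--$y$ and $y$--$z$ paths have been inserted precisely to ensure that every remaining pair of chosen subdivision vertices is also at distance at most $2$ in $G_D$. A direct count yields $|S|=5D/2$. The identity $\Delta(G_D)=D$ is then a local calculation at each hub, and $2$-degeneracy is immediate because every non-hub vertex has degree exactly $2$, so one can peel off $2$-vertices until nothing remains. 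For the $\mad$ bound I would use the fractional orientation trick from the proof of Theorem~\ref{charpentier-thm}: orient each hub-incident edge fully toward the adjacent $2$-vertex, and on each length-$2$ path $x_i\,v\,x_j$ redistribute a small fraction of the edge $vx_j$ back to $x_j$; the few extra edges are oriented by hand. With the fractions chosen well, every indegree is strictly less than $2$, which gives $\mad(G_D)<4$.

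The main obstacle will be the bookkeeping: a single choice of parameters must simultaneously achieve $\Delta(G_D)=D$, make $|S|$ exactly $5D/2$, guarantee that $S$ really is a clique in $G_D^2$ (this is what forces the specific pattern of extra $y$--$y$ and $y$--$z$ paths), and still leave strict positive slack in the fractional orientation so that $\mad(G_D)<4$ is strict rather than merely $\le 4$. Coordinating the last two is delicate: the extra paths needed to complete the clique must not push any vertex's indegree up to $2$, and the parity hypothesis on $D$ is what ensures all these constraints can be met at once. None of the individual checks is deep, but making everything align, and in particular verifying the strict inequality for $\mad$, will require careful counting throughout.
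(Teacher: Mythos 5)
Your construction is essentially the paper's (a $K_5$ whose ten edges are each blown up into $D/4$ internally disjoint paths of length $2$, with additional degree-$2$ vertices joining far-apart subdivision vertices), but your verification of the key clique is wrong. The clique of order $\frac{5D}{2}$ in $G_D^2$ consists of \emph{all} the subdivision vertices and of nothing else; the hubs are not in it and cannot be. A subdivision vertex $v$ on an $x_1x_2$-path is at distance $3$ from each of $x_3,x_4,x_5$: every neighbor of $x_3$ is a subdivision vertex on an edge incident to $x_3$, and the extra paths you add only join subdivision vertices to subdivision vertices through fresh degree-$2$ midpoints, so they bring $v$ to distance $2$ from those subdivision vertices but leave it at distance $3$ from the far hubs. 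Your check covers hub--hub pairs and each $2$-vertex against only the two hubs on its own path; the unaddressed pairs (a $2$-vertex and one of the other three hubs) are genuinely non-adjacent in $G_D^2$, so any $S$ containing both the hubs and the $2$-vertices is not a clique. The correct argument drops the hubs entirely: two subdivision vertices on $K_5$-edges sharing a hub are at distance $2$ through that hub, and two on disjoint $K_5$-edges are at distance $2$ through the added vertex joining them --- and this second case requires one added vertex for \emph{every} such pair (there are $\Theta(D^2)$ of them), not ``a few.''

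Two smaller corrections. First, ``every non-hub vertex has degree exactly $2$'' is false: the subdivision vertices acquire degree $2+3D/4$ from the extra paths attached to them. The graph is still $2$-degenerate, but the peeling must go in the right order --- first the added midpoints (degree $2$), after which the subdivision vertices have degree $2$, after which the hubs are isolated. Second, once $2$-degeneracy is in hand you do not need the fractional orientation from Theorem~\ref{charpentier-thm} at all: a $2$-degenerate graph satisfies $|E(H)|\le 2|V(H)|-3$ for every subgraph $H$ with at least two vertices, so $\mad(G_D)<4$ is automatic and strict, with no delicate interaction between the extra paths and the orientation to worry about.
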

\begin{proof}
Begin with a copy of $K_5$ and replace each edge $vw$ with a copy of
$K_{2,D/2}$, identifying the vertices in the small part with $v$ and $w$.
Call these $10(D/2)=5D$ added vertices \emph{new vertices}.
Now for each pair of new vertices, $x,y$, that is at distance 3 or more in the
resulting graph, add an additional vertex $z_{xy}$ adjacent to only $x$ and $y$.
	(To keep the figure looking clean, we have only drawn these gray vertices $z_{xy}$ 
	when vertices $x$ and $y$ are both in the middle of their copy of $K_{2, D/2}$. But
	the construction includes a gray vertex for \emph{every} such pair $x,y$.)
Call the resulting graph $G_D$.  Note that in $G_D^2$ the $5D$ new vertices
form a clique.  It is easy to check that $G_D$ is 2-degenerate (so has $\mad <
4$) and also that $\Delta(G_D)=4(D/2)=2D$.  Thus, $\chi(G_D^2)\ge \omega(G_D^2) \ge
5D = \frac52\Delta(G_D)$, as desired.
\end{proof}

To complement Theorem~\ref{HKP-thm}, the same authors~\cite{HKP} show that if
$\Delta\ge 8$ and $\mad(G)<4$, then $\chil^2(G)\le 3\Delta+1$.
They also posed the following two questions.

\begin{question}[\cite{HKP}]
\label{HKP-question1}
Is there an integer $D$ such that every graph $G$ with $\Delta(G)\ge D$ 
that is 2-degenerate satisfies $\chi^2(G)\le \frac52\Delta(G)$?
\end{question}

\begin{question}[\cite{HKP}]
\label{HKP-question2}
Is there an integer $D$ such that every graph $G$ with $\Delta(G)\ge D$ and
$\mad(G)<4$ satisfies $\chi^2(G)\le \frac52\Delta(G)$?
\end{question}

If we aim to answer Question~\ref{HKP-question1} or~\ref{HKP-question2} negatively,
then it is natural to try to construct some infinite family of graphs $H_D$ such that 
$\Delta(H_D)=D$ and $\omega(H^2_D)>\frac52$.  But Cranston and Yu~\cite{CY} showed that
the construction of Theorem~\ref{HKP-thm} is best possible, up to an additive constant.
They proved that if $G$ is $2$-degenerate, then $\omega(G^2)\le \frac52\Delta(G)+72$.
And if $\mad(G)<4$, then $\omega(G)\le \frac52\Delta(G)+532$.  The first of these results
was later sharpened by Kim and Lian~\cite{KL}.  They showed that there exists $D$ such 
that if $G$ is $2$-degenerate with $\Delta(G)\ge D$, then $\omega(G^2)\le \frac52\Delta(G)$.
As a weaker version of Question~\ref{HKP-question2}, we explicitly conjecture the following.

\begin{conjecture}
\label{question3}
There exists an integer $D$ such that every graph $G$ with $\Delta(G)\ge D$ and
$\mad(G)<4$ satisfies $\omega^2(G)\le \frac52\Delta(G)$.
\end{conjecture}

\section{Strong Edge Coloring}
\subsection{The
\texorpdfstring{Erd\H{o}s}{Erdos}--\texorpdfstring{Ne\v{s}et\v{r}il}{Nesetril} Conjecture}
\label{erdos-nesetril-sec}

A \Emph{strong edge-coloring} of a graph $G$ colors each edge of $G$ such that edges
get distinct colors if either (i) they share a common endpoint or (ii) they each
share an endpoint with the same edge.  Equivalently, it is a proper coloring of
the square of the line graph of $G$.  The \emph{strong edge-chromatic number} of
$G$, denoted \Emph{$\chi^s(G)$}, is the smallest number of colors that allow a strong
edge-coloring.  At a seminar in Prague in 1985, \erdos\ and \nesetril\
posed~the~following (see~\cite{FaudreeGST89}).

\begin{conjecture}[\cite{FaudreeGST89}] %[Erdos--Nesetril]
\label{erdos-nesetril-conj}
If $G$ is a graph with maximum degree $\Delta$, then
$$
\chi^s(G)\le
\left\{
\begin{array}{ll}
\frac54\Delta^2 & \mbox{if $\Delta$ is even,}\\
\frac{5\Delta^2-2\Delta+1}4 & \mbox{if $\Delta$ is odd.}\\
\end{array}
\right.
$$
\end{conjecture}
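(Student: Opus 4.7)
The plan is to attack the conjecture via the semi-random (Rödl nibble) method, in the spirit of Molloy--Reed's $\chi^s(G)\le (2-\epsilon)\Delta^2$ bound, augmented by a structural treatment of near-extremal configurations. The trivial greedy bound $\chi^s(G)\le 2\Delta^2-2\Delta+1$ follows from the fact that each edge of $G$ has at most $2(\Delta-1)+2(\Delta-1)^2$ edges within distance two in the line graph. The conjectured improvement to $\frac{5}{4}\Delta^2$ reflects that, for every graph other than the blow-up of $C_5$ (where each vertex is replaced by $\Delta/2$ independent twins), many pairs of forbidden neighbors of a given edge $e$ coincide or can be forced to share a color, so colors may be reused; extracting that savings quantitatively is the whole game.

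First I would color $E(G)$ iteratively in rounds. In each round, each uncolored edge samples a tentative color uniformly from its shrinking palette and retains it if no conflict arises. Using the Lovász Local Lemma together with Talagrand-type concentration, one tracks, for each uncolored edge $e$, the palette size $L_e$ and the conflict degree $D_e$ (the number of other uncolored edges within distance two in the line graph of the uncolored subgraph). The goal is to maintain the invariant $L_e \ge (\tfrac{5}{4}+o(1))D_e$ throughout; once the uncolored subgraph is sparse enough, a finishing step (LLL plus, if necessary, entropy compression à la Molloy) completes the coloring within the target budget.

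The main obstacle is the two-step nature of the constraint: when an edge $e'$ receives its color, it forbids that color at every edge within distance two of $e'$, a set of size $\Theta(\Delta^2)$. This creates very strong correlations between palette sizes at nearby edges, well beyond what arises in classical edge coloring; naive martingale concentration yields error terms larger than the savings we are trying to prove. Replacing Talagrand by a more refined inequality (e.g., the bounded-differences variant of Warnke, or Kim--Vu polynomial concentration tailored to the triangle-rich substructure of the line graph squared) appears essential.

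The hard part --- and the reason Erd\H{o}s--Ne\v{s}et\v{r}il has resisted attack for decades --- is the sharpness of the constant $\tfrac{5}{4}$: the nibble must extract, at each local $C_5$-blow-up configuration, nearly the entire savings that the conjecture predicts, with no slack for union-bound losses. I would attempt a structural dichotomy. Fix a threshold $\eta>0$. If every edge $e=uv$ has neighborhood structure that is $\eta$-far from a $C_5$-blow-up (measured, say, by the number of induced copies of $K_{2,\Delta/2}$ or of $C_4$'s spanning $N(u)\cup N(v)$), then a weighted random coloring biased toward the underused colors of the locally regular portion wins. Otherwise, peel off maximal near-extremal gadgets, color them explicitly within $\lfloor\tfrac{5}{4}\Delta^2\rfloor$ colors by exploiting their rigid block structure, and recurse on the residual graph whose effective maximum degree has dropped. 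Quantifying this dichotomy sharply --- so that the union of the two regimes loses no more than $o(\Delta^2)$ additional colors --- is where I expect essentially all the difficulty to lie.
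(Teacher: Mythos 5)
This statement is the Erd\H{o}s--Ne\v{s}et\v{r}il Conjecture, and the paper does not prove it --- it is presented as an open problem. The paper only supplies the extremal construction (the blow-up of $C_5$ into independent sets of size $\Delta/2$, adjusted for odd $\Delta$) showing the bound would be sharp, and then surveys partial progress: the conjecture is known for $\Delta\le 3$, nearly known for $\Delta=4$ ($\chi^s\le 21$ versus the conjectured $20$), and for large $\Delta$ the best general bound is $\chi^s(G)\le 1.772\Delta^2$ due to Hurley, de Joannis de Verclos, and Kang. So there is no ``paper's own proof'' to compare against, and your submission should not be read as one either.

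More importantly, what you have written is a research program, not a proof: every quantitatively decisive step is deferred. You assert the invariant $L_e\ge(\tfrac54+o(1))D_e$ as a goal, but give no argument that the nibble can maintain it; the known analyses of the na\"{\i}ve coloring procedure and its iterations bottom out at constants near $1.772$, and closing the gap to $1.25$ is precisely the open problem. Your structural dichotomy (far from / close to a $C_5$-blow-up) is likewise unexecuted, and there is a concrete obstruction to the ``locally sparse'' branch: even the clique number of the square of a line graph is only known to satisfy $\omega^2(L(G))\le\frac43\Delta^2$ (Faron--Postle), which already exceeds $\frac54\Delta^2$, so no local-density argument currently available can reach the target. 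Finally, the conjecture demands the exact bound $\frac54\Delta^2$ for \emph{all} $\Delta$, with no $o(\Delta^2)$ slack and no ``$\Delta$ sufficiently large'' hypothesis, whereas the semi-random method inherently produces asymptotic statements with lower-order losses. As it stands, your proposal restates the difficulty rather than resolving it.
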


The following example shows the conjecture is best possible.
When $\Delta$ is even, let $V_i$ be an independent set of size $\frac{\Delta}2$, for
each $i\in\{1,2,3,4,5\}$.  Let $V(G)=\cup_{i=1}^5V_i$ and for each $v\in V_i$ and $w\in
V_j$, add edge $vw$ to $E(G)$ if and only if $(i-j)\equiv 1\pmod 5$.  The
resulting graph $G$ is a ``5-cycle of independent sets''; since each pair of the
$\frac54\Delta^2$ edges lies together on a 4-cycle or 5-cycle, all edges must
get distinct colors.  When $\Delta$ is odd, the construction is similar.  Now
let $V_1$ and $V_2$ have size $\frac{\Delta+1}2$ and each remaining $V_i$ have
size $\frac{\Delta-1}2$.  In this case the graph has $\frac{5\Delta^2-2\Delta+1}4$
edges, and again all edges must get distinct colors.  
Chung, Gy{\'a}rf{\'a}s,
Tuza, and Trotter~\cite{ChungGTT90} proved that this construction is the unique
worst case among graphs where all edges must get distinct colors.  %More precisely, 
Specifically, they showed that the maximum number of edges in a $2K_2$-free graph
is exactly given by the upper bound in the \erdos--\nesetril\ Conjecture;
further, the extremal graphs are unique.

When we cannot prove a conjectured upper bound on the chromatic number, 
it is natural to seek to prove the corresponding bound on its clique 
number.  (We did this for squares of planar graphs near the start of 
Section~\ref{sec:wegner}, and for squares of graphs $G$
with $\mad(G)<4$ in Conjecture~\ref{question3} and the paragraph preceding it.)
We write $\omega^s(G)$ for the clique number of the square of the line graph of $G$.
Conjecture~\ref{erdos-nesetril-conj} would imply that if $G$ has maximum degree $\Delta$,
then $\omega^s(G)\le \frac54\Delta^2$.
In this direction, \'{S}leszy\'{n}ska-Nowak~\cite{SN} proved that $\omega^s(G)\le \frac32\Delta^2$,
and Faron and Postle~\cite{FP} improved this to $\omega^s(G)\le \frac43\Delta^2$.
To further improve the bound on $\omega^s(G)$, Cames van Batenburg, Kang, and Pirot~\cite{CvBKP}
considered the problem when various subgraphs are forbidden.  They proved that 
(i) $\omega^s(G)\le \frac54\Delta^2$ if $G$ is triangle-free, (ii) $\omega^s(G)\le \Delta^2$ if 
$G$ is $C_5$-free, and (iii) $\omega^s(G)\le \Delta^2$ if $G$ is $C_{2k+1}$-free, with $k\ge 3$
and $\Delta\ge 3k^2+10k$. (By \emph{$H$-free}, here we forbid all copies of subgraph $H$, not necessarily 
induced.)  When $H$ is an even cycle, they posed the following.

\begin{conjecture}[\cite{CvBKP}]
	\label{CvBKP-conj}
	If $G$ is $C_{2k}$-free with $\Delta\ge \max\{4,2k-2\}$, then $\omega^s(G)\le (2k-1)(\Delta-k+1)$.  
\end{conjecture}
If true, Conjecture~\ref{CvBKP-conj} is sharp: start with $K_{2k-1}$ and add $\Delta-2k+2$ pendant 
edges at each clique vertex.  The authors of~\cite{CvBKP} proved partial results in support of the 
conjecture, including proving it when $k=2$.
Cho, Choi, Kim, and Park~\cite{CCKP2} proved this conjecture up to an additive constant $O(k^2)$.
In the same paper, the authors also proved that if $G$ is $C_{2k}$-free and bipartite, 
then $\omega^s(G)\le k(\Delta-1)+1$.  This had been previously conjectured in~\cite{CvBKP},
and is sharp as follows: start with a copy of $K_{\Delta,k-1}$ and at some vertex in the big part
add $\Delta-k+1$ pendant edges.  

\bigskip

The \erdos--\nesetril\ Conjecture is easy for $\Delta\le2$.  For $\Delta=3$,
it was proved by
Andersen~\cite{Andersen92} and also by Hor\'{a}k, He, and
Trotter~\cite{HorakHT93}.  For $\Delta=4$, Hor\'{a}k~\cite{Horak90} showed that
$\chi^s(G)\le 23$, Cranston~\cite{Cranston06} strengthened this to
$\chi^s(G)\le 22$, and Huang, Santana, and Yu~\cite{HSY} strengthened it
further to $\chi^s(G)\le 21$.
The proofs of \cite{Andersen92}, \cite{Cranston06}, and~\cite{HSY} all follow the same
approach.  For some specified vertex $v$ in $G$, we color the edges greedily in
order of non-increasing distance from $v$.  Thus, at the time each edge is
colored, it has at most $2\Delta^2-3\Delta$ restrictions on its color, so it
uses color at most $2\Delta^2-3\Delta+1$.  This is true for all edges except
those incident to $v$.  So the hard work consists of showing that we can choose
$v$ such that we can finish the coloring.  Substituting $\Delta=3$ gives the
desired bound of 10.  Substituting $\Delta=4$ gives 21. 
To finish the coloring easily,~\cite{Cranston06} uses a $22^{\textrm{nd}}$ color, only near $v$.
To save this extra color,~\cite{HSY} considers a wide range of options that
work for $v$, and ultimately shows that every graph contains at least one of them.

The class $\G_7$ is all graphs $G$ with $\max_{vw\in E(G)}d(v)+d(w)\le 7$;
this generalizes subcubic graphs.  Intuitively, $\G_7$ lies between the classes of subcubic and
subquartic graphs, at least in terms of colors needed for a strong edge-coloring.  Chen, Huang,
Yu, and Zhou~\cite{CHYZ} conjectured $\chi^s(G)\le 13$ for each $G\in \G_7$ (and proved
$\chi^s(G)\le 15$).  This conjecture is best possible due to a blow-up of the 5-cycle, similar
to that described above.  The best progress is due to Nelsen and Yu~\cite{NY}, who proved the 
conjecture for all planar graphs.

For every bipartite graph $G$, Faudree, Gy{\'a}rf{\'a}s, Schelp, and
Tuza~\cite{FaudreeGST89} conjectured that $\chi^s(G)\le \DeltaG^2$.
They proved the weaker statement that if $G$ is such that in every strong
edge-coloring each edge must get a distinct color (and $G$ is bipartite), 
then $G$ has at most $\DeltaG^2$ edges.  
Brualdi and Quinn Massey~\cite{BQm} 
%strengthened the above conjecture, as follows. 
conjectured the following.

\begin{conjecture}[\cite{BQm}]
\label{BQm-conj}
If $G$ is bipartite with parts
$A$ and $B$, then $\chi^s(G)\le \Delta_A\Delta_B$, where $\Delta_A$ and
$\Delta_B$ are the maximum degrees, respectively, of vertices in $A$ and $B$.  
\end{conjecture}

This conjecture is sharp, as witnessed by the complete bipartite graph
$K_{\Delta_A,\Delta_B}$.  As a first step, Brualdi and Quinn Massey proved the
case of the conjecture when each cycle in $G$ has length divisible by 4.  
The case of Conjecture~\ref{BQm-conj} when $\Delta_A=1$ is trivial, since the
graph is a disjoint union of stars.  The case when $\Delta_A=2$ was proved by
Nakprasit~\cite{nakprasit08}, and the case when $\Delta_A=3$ by Huang, Yu,
and Zhou~\cite{HYZ}.

Mahdian~\cite{Mahdian}
used the R\"{o}dl nibble (more details are given at the end of this section) to 
show that if $G$ is $C_4$-free, then $\chi^s(G)\le (2+o(1))\frac{\DeltaG^2}{\ln \DeltaG}$.
More generally, his proof works for graphs that are $K_{2,t}$-free for any fixed $t$.
(This result was generalized by Vu~\cite{Vu} to a similar upper
bound, with a worse multiplicative constant, when any fixed bipartite graph 
is forbidden as a subgraph.) Mahdian also conjectured that the same bound holds
for $K_{t,t}$-free graphs, for any fixed $t$.  This conjecture was proved by 
Bi, Bradshaw, Dhawan, and Xu~\cite{BBDX}, using a variation of Mahdian's argument.
In fact, with more work they saved a factor of 2: Every $K_{t,t}$-free graph satisfies
$\chi^s(G)\le (1+o(1))\frac{\DeltaG^2}{\ln \DeltaG}$.

Mahdian also strengthened the conjecture of
Faudree et al.~in a different direction.  

\begin{conjecture}[\cite{Mahdian}]
\label{mahdian-conj}
If $G$ is a graph with no 5-cycle, then $\chi^s(G)\le \DeltaG^2$.
\end{conjecture}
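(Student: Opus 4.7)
\emph{Proof proposal.}
My plan is a two-phase strategy: first establish $\chi^s(G)\le (1+o(1))\Delta^2$ for $C_5$-free $G$ via the probabilistic method, then close the gap to exactly $\Delta^2$ using structural analysis. Since the complete bipartite graph $K_{\Delta,\Delta}$ is $C_5$-free and satisfies $\chi^s(K_{\Delta,\Delta})=\Delta^2$, the target is tight, and any argument must handle this extremal case without slack.

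For the first phase, I would adapt the semi-random (nibble) method in the style of Kahn~\cite{Kahn2} and of Molloy--Reed~\cite{MolloyR-GCPM} to the conflict graph $L(G)^2$. The essential input is a codegree bound: for a fixed edge $e$, one must control the number of pairs $\{f_1,f_2\}$ of edges that both conflict with $e$ and also conflict with each other. A 5-cycle in $G$ produces precisely the dense local structure that inflates this codegree, so forbidding $C_5$ is exactly what drives it below $(1+o(1))\Delta^2$; combined with Talagrand's concentration and the {\lovasz} Local Lemma, this should yield the asymptotic bound for $\Delta$ sufficiently large. This approach parallels Mahdian's~\cite{Mahdian} $C_4$-free result, though the weaker exclusion here (forbidding only $C_5$, a longer cycle) should give a larger codegree and hence only an exact-order bound rather than a lower-order-term improvement.

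The second phase, closing the $(1+o(1))$ gap to the exact constant $\Delta^2$, is where I expect the main obstacle. One plausible route is a reduction to the bipartite setting: use the absence of $C_5$ to partition or augment $G$ into a controlled collection of bipartite-like pieces (exploiting that every odd cycle in $G$ has length $3$ or at least $7$), color each piece's edges using the Brualdi--Quinn bound (Conjecture~\ref{BQ-conj}), and then merge palettes without conflict. An alternative is an Alon--Tarsi-style polynomial argument on a well-chosen orientation of $L(G)^2$, using $C_5$-freeness to control the parity count of Eulerian subgraphs. A preliminary necessary step would be to strengthen the Faron--Postle bound $\omega^2(L(G))\le \frac{4}{3}\Delta^2$ to $\omega^2(L(G))\le \Delta^2$ in the $C_5$-free case, which already seems nontrivial.

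The genuine difficulty is that $C_5$-freeness does not force bipartiteness: triangles and $7$-cycles remain permitted, so any argument must uniformly handle bipartite extremal examples together with non-bipartite $C_5$-free graphs of small odd girth, without losing even a subconstant factor. Without a new structural theorem for $C_5$-free graphs that isolates the bipartite extremal behavior, I do not see how to eliminate the last constant factor; this is consistent with the conjecture's long-standing open status.
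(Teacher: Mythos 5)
This statement is a conjecture of Mahdian that the paper records as open (it reappears verbatim in the list of open problems), so there is no proof in the paper to compare against; what you have written is a research programme rather than a proof, and you concede as much in your final paragraph. The decisive gap is your second phase: you have no argument that closes the factor from $(1+o(1))\Delta^2$ to $\Delta^2$, and the two routes you float do not work as stated. The reduction via the Brualdi--Quinn bound invokes Conjecture~\ref{BQ-conj}, which is itself open except for small $\Delta_A$, so even granting a decomposition into bipartite-like pieces you would be resting one open conjecture on another; and ``merging palettes without conflict'' across pieces is precisely where strong edge-coloring arguments break, since conflicts in $L(G)^2$ cross any such partition. The Alon--Tarsi suggestion is not developed at all.

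Your first phase is also not the routine adaptation you present it as. The local density of $L(G)^2$ around an edge $e$ is driven by triangles, $4$-cycles, and generic distance-one adjacencies, not specifically by $5$-cycles: already in a $\Delta$-regular tree an edge has roughly $2\Delta^2$ neighbours in $L(G)^2$, so forbidding $C_5$ does not by itself make the conflict graph's degree, or its neighbourhood density, small enough for the sparse-neighbourhood machinery to reach $(1+o(1))\Delta^2$. The strongest results of this type with no cycle restriction (Bruhn--Joos, Bonamy--Perrett--Postle, Hurley--de Joannis de Verclos--Kang) stop at $1.772\Delta^2$, and Mahdian's own $C_4$-free theorem succeeds because excluding $C_4$ genuinely sparsifies the neighbourhoods; there is no analogous codegree computation in your proposal showing that excluding only $C_5$ does the same. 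So both phases contain unproven claims, and the proposal does not constitute a proof of the conjecture --- which is consistent with the paper's treatment of it as an open problem.
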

\noindent

Greedy coloring (in any order) shows that $\chi^s(G)\le 2\Delta^2-2\Delta+1$.
\erdos\ and \nesetril\ specifically asked for $c>0$ such that
$\chi^s(G)\le(2-c)\Delta^2$ for all graphs $G$.  Molloy and
Reed~\cite{MolloyR-strong} provided such a $c$ by using the probabilistic
method; see also~\cite[Chapter~10]{MolloyR-GCPM}.

\begin{theorem}[\cite{MolloyR-strong}]
There exists $\Delta_0$ such that when $\Delta\ge \Delta_0$
we have $\chi^s(G)\le 1.998\Delta^2$.  
\label{thm:MR-strong}
\end{theorem}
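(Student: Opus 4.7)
The plan is to apply the probabilistic method, and in particular the Lov\'{a}sz Local Lemma combined with a random partial coloring (``nibble''), to beat the trivial greedy bound on $\chi^s(G)$. Recall that $\chi^s(G)=\chi((L(G))^2)$ and that $(L(G))^2$ has maximum degree at most $2\Delta^2-3\Delta+1$, which already gives $\chi^s(G)\le 2\Delta^2-2\Delta+1$ by greedy coloring. To replace the leading $2$ by $1.998$, the issue is to show that the neighborhoods in $(L(G))^2$ have enough ``local density'' to allow color reuse.

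First I would prove a structural lemma: for every edge $e=uv$ of $G$, the induced subgraph $(L(G))^2[N(e)]$ contains $\Omega(\Delta^3)$ edges. The reason is that for each neighbor $w$ of $u$ with $w\ne v$, the edge $uw$ is adjacent in $(L(G))^2$ to every edge incident to $w$; and every pair of edges incident to the same vertex $w$ are themselves adjacent in $(L(G))^2$. Summing over the $\sim 2\Delta$ vertices near $e$ produces $\Omega(\Delta^3)$ such ``in-neighborhood'' edges. In graph-coloring language, this means a typical vertex of $(L(G))^2$ has many pairs of neighbors that can receive the same color.

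Next, following the scheme of \cite[Chapter 10]{MolloyR-GCPM}, I would run a naive random coloring step with a palette of size $q=\lceil 1.998\Delta^2\rceil$: each edge picks a color uniformly and independently, and retains it provided no conflicting edge chose the same color. For a fixed edge $e$, let $X_e$ denote the number of colors still available at $e$ after this step, and $Y_e$ the number of conflicting edges of $e$ that remain uncolored. The density lemma above forces many ``collisions'' among colors used on $N(e)$, which makes $\mathbb{E}[X_e]$ strictly larger than $\mathbb{E}[Y_e]$ by a $\Delta^2$-scale margin. Talagrand's inequality yields concentration of both $X_e$ and $Y_e$ around their means; the LLL (each bad deviation event depends on only $\mathrm{poly}(\Delta)$ others, with probability exponentially small in $\Delta$) then produces a partial coloring where $X_e>Y_e$ holds at every uncolored edge simultaneously.

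At that point the leftover uncolored subgraph $H$ of $(L(G))^2$ has, at each vertex, more available colors than uncolored neighbors, so greedy coloring (or a second LLL application, iterating the nibble $O(\log\Delta)$ times to drive the gap wider) completes the strong edge-coloring within $q$ colors. The main obstacle is the quantitative bookkeeping: the saving constant $c=0.002$ must be small enough that Talagrand's inequality gives a large enough deviation bound for the LLL to close, yet large enough to absorb the error terms from the structural lemma and from the completion step. Since the concentration bounds and LLL conditions both require $\Delta$ to exceed some absolute constant $\Delta_0$, the theorem is proved only for $\Delta\ge\Delta_0$, which matches the statement of Theorem~\ref{thm:MR-strong}.
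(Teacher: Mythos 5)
Your overall architecture --- na\"{\i}ve random coloring followed by uncoloring conflicts, concentration via Talagrand, a union over vertices via the Lov\'asz Local Lemma, and greedy completion --- is exactly the Molloy--Reed scheme that the paper sketches. But your key structural lemma is stated in the wrong direction, and the inference you draw from it is backwards, so the heart of the argument does not go through as written.

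You claim that $(L(G))^2[N(e)]$ contains $\Omega(\Delta^3)$ edges and conclude that ``a typical vertex of $(L(G))^2$ has many pairs of neighbors that can receive the same color.'' This is the opposite of what is needed. In the na\"{\i}ve coloring procedure, two neighbors of $e$ that are \emph{adjacent} in $(L(G))^2$ and choose the same color are both uncolored, so they contribute no saving; the savings at $e$ come precisely from \emph{non-adjacent} pairs of neighbors that receive (and retain) a common color. Since $N(e)$ has about $2\Delta^2$ vertices and each monochromatic pair occurs with probability about $1/\Delta^2$, obtaining an expected saving of $c'\Delta^2$ colors requires $\Omega(\Delta^4)$ non-adjacent pairs inside $N(e)$ --- equivalently, an \emph{upper} bound showing that $(L(G))^2[N(e)]$ has at most $(1-\delta)\binom{2\Delta^2}{2}$ edges for some fixed $\delta>0$. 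That upper bound, which exploits the specific structure of squares of line graphs, is the lemma the paper's proof relies on (``$H[N_H(v)]$ has far fewer than $\binom{2\Delta(G)^2}{2}$ edges''). Your lower bound of $\Omega(\Delta^3)$ edges is true but irrelevant: it is smaller by a factor of $\Delta$ than the total number of pairs $\Theta(\Delta^4)$, so it says nothing about whether the neighborhood is close to a clique, and even if it did, density of the neighborhood would \emph{hurt} rather than help. The remaining steps (concentration, LLL, greedy finish) are fine in outline, but they cannot be executed until the density lemma is replaced by the correct sparsity lemma.
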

\vspace{-.3in}

\begin{corollary}[\cite{MolloyR-strong}]
There exists $c>0$ such that $\chi^s(G)\le (2-c)\DeltaG^2$ for all $G$.
\end{corollary}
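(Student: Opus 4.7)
The plan is to derive the corollary directly from Theorem~\ref{thm:MR-strong} by splitting on the value of $\Delta(G)$. Let $\Delta_0$ be the constant provided by the theorem. For any graph $G$ with $\Delta(G)\ge \Delta_0$ we immediately have $\chi^s(G)\le 1.998\,\Delta(G)^2=(2-0.002)\Delta(G)^2$, so the choice $c_1:=0.002$ handles this range without any extra work.

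The only remaining range is $1\le \Delta(G)\le \Delta_0-1$, which is finite and does not depend on $G$. For this range I would fall back on the trivial greedy bound $\chi^s(G)\le 2\Delta(G)^2-2\Delta(G)+1$, which is mentioned in the paragraph just above the theorem and follows from coloring the edges of $G$ in any order: each edge is adjacent in the square of the line graph to at most $2\Delta(G)^2-2\Delta(G)$ previously colored edges. For each fixed $\Delta\in\{1,\ldots,\Delta_0-1\}$, the inequality $2\Delta^2-2\Delta+1\le(2-c)\Delta^2$ is equivalent to $c\le (2\Delta-1)/\Delta^2$, and the right-hand side is positive and decreasing in $\Delta$ on this range. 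So setting
\[
c_2:=\frac{2(\Delta_0-1)-1}{(\Delta_0-1)^2}=\frac{2\Delta_0-3}{(\Delta_0-1)^2}>0
\]
ensures $\chi^s(G)\le (2-c_2)\Delta(G)^2$ for all $G$ with $1\le\Delta(G)\le\Delta_0-1$. The case $\Delta(G)=0$ is trivial, since then $G$ is edgeless.

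Combining the two cases, the constant $c:=\min(c_1,c_2)>0$ satisfies $\chi^s(G)\le (2-c)\Delta(G)^2$ for every graph $G$, which is the desired conclusion. There is no real obstacle here: all the difficulty of the corollary is already carried by Theorem~\ref{thm:MR-strong}, whose probabilistic proof (an application of the Lov\'asz Local Lemma together with a careful analysis of expected color conflicts, as carried out in~\cite{MolloyR-strong}) is the substantive ingredient. The corollary itself is essentially a bookkeeping step: take the worst-case constant between the asymptotic regime and the finitely many small-$\Delta$ regimes handled by the greedy bound.
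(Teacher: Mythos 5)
Your proof is correct and follows essentially the same route as the paper: split on whether $\Delta\ge\Delta_0$, apply Theorem~\ref{thm:MR-strong} in the large-degree regime, and absorb the finitely many small-degree cases into the constant via a crude quadratic bound. The only cosmetic difference is that the paper uses Brooks' Theorem on the square of the line graph to get $\chi^s(G)\le 2\Delta^2-2\Delta$ (yielding the cleaner choice $c=\min\{0.002,1/\Delta_0\}$), whereas you use the greedy bound $2\Delta^2-2\Delta+1$ and compute the resulting constant directly; both work.
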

\begin{proof}
Let $c:=\min\{.002,\frac1{\Delta_0}\}$.  
Given a graph $G$, if
$\Delta\ge {\Delta_0}$, then $\chi^s(G)\le1.998\Delta^2\le(2-c)\Delta^2$.  Otherwise,
assume $\Delta<\Delta_0$.  Applying Brooks' Theorem to the square of the line
graph shows that $\chi^s(G)\le
2\Delta^2-2\Delta=(2-\frac2{\Delta})\Delta^2<(2-\frac1{\Delta_0})\Delta^2\le (2-c)\Delta^2$.
\end{proof}

The proof of Theorem~\ref{thm:MR-strong} is quite nice, so we outline it below.
We focus on the fact that there exist $\Delta_0$ and $c>0$ such that $\chi^s(G)\le
(2-c)\Delta^2$ for all $G$ with $\Delta\ge \Delta_0$, but we don't show that 
$c=.002$ suffices.

\begin{proof}[Proof sketch of Theorem~\ref{thm:MR-strong}]
For simplicity, assume that $G$ is regular; if not, then we embed it as
a subgraph in a regular graph with the same maximum degree $\Delta$.
Let $H$ be the square of the line graph of $G$. We will 
color $H$ using at most $(2-c)\Delta(G)^2$ colors, for some $c>0$.
Color each vertex of $H$ randomly (and uniformly) from the set
$\{1,\ldots,\Delta(G)^2\}$ and whenever two adjacent vertices get the
same color, uncolor them both.  

We can check that, for each vertex $v$ in $H$, the
expected number of colors that are repeated in the neighborhood of $v$ is at
least $c'\Delta(G)^2$, for some $c'>0$ when $\Delta(G)$ is sufficiently large.
(This uses that $H$ is the square of a line graph, so
$H[N_H(v)]$ has far fewer than $\binom{2\Delta(G)^2}{2}$ edges.)  By
Talagrand's inequality~(see~\cite[Chapter~10]{MolloyR-GCPM}) we can show that,
for each vertex $v\in V(H)$, the
number of colors repeated in its neighborhood is close to the expected value
with high probability.  By the {\lovasz} Local Lemma, there exists some random
coloring such that the number of colors repeated in \emph{every} neighborhood is close
to its expected value.  Given such a partial coloring, we can complete the
coloring greedily.  This proves the desired result.
\end{proof}

Recall that each strong edge-coloring of a graph $J$ corresponds to a coloring of the 
square of the line graph of $J$.  And notice that if $G$ is the line graph of a simple
graph $J$ (with $\Delta(J)\ge 3$), then $\omega(G) = \Delta(J)$.  This prompted de Joannis de Verclos,
Kang, and Pastor~\cite{dJdVKP} to posit the following generalization of Conjecture~\ref{erdos-nesetril-conj}.

\begin{conjecture}[\cite{dJdVKP}]
\label{claw-free-conj}
If $G$ is a claw-free graph, then
$$
\chi(G^2)\le
\left\{
\begin{array}{ll}
\frac54\omega(G)^2 & \mbox{if $\omega(G)$ is even,}\\
\frac{5\omega(G)^2-2\omega(G)+1}4 & \mbox{if $\omega(G)$ is odd.}\\
\end{array}
\right.
$$
\end{conjecture}

As evidence in support of Conjecture~\ref{claw-free-conj}, the authors of~\cite{dJdVKP} proved that 
there exists $c>0$ such that $\chi(G^2)\le (2-c)\omega(G)^2$.  (A graph $G$ is \emph{quasiline} if 
for each $v\in V(G)$ the set $N(v)$ can be covered by $2$ cliques.)
Their proof breaks into 3 stages: 
(1) showing that the result holds for all claw-free graphs if it holds for all quasiline graphs,
(2) showing that it holds for all quasiline graphs if it holds for all line graphs of multigraphs, and
(3) proving it for all line graphs of multigraphs.  The reductions in the first 2 stages both also work
for significantly larger values of $c$.  But the third phase is the current bottleneck.  Its proof is 
similar to the proof sketch of Theorem~\ref{thm:MR-strong}; but now we consider line graphs of 
\emph{multigraphs}, whereas Theorem~\ref{thm:MR-strong} considers only line graphs of simple graphs.

As more evidence for Conjecture~\ref{claw-free-conj}, Cames van Batenburg and Kang~\cite{CvBK} proved
the conjecture in the case that $\omega(G)=3$.  And in the case that $\omega(G)=4$, they proved it 
up to an additive error term of $+1$.  They also showed that Conjecture~\ref{claw-free-conj}
holds when $\omega(G)=4$ if and only if Conjecture~\ref{erdos-nesetril-conj} holds when $\Delta(G)=4$.  
(That is, when $\omega(G)=4$ the most difficult case of Conjecture~\ref{claw-free-conj} is when $G$ is a 
line graph.)
\bigskip

The proof of Theorem~\ref{thm:MR-strong} breaks the problem of bounding $\chi^s$
into two subproblems: (i) proving an upper bound on the density of the subgraph
induced by the neighborhood of any vertex in $H$ and (ii) using this density upper bound
to prove an upper bound on $\chi(H)$, since $\chi(H)=\chi^s(G)$.  Recently, a
string of papers has followed this approach, each bounding $\chi^s(G)$ when
$\DeltaG$ is sufficiently large.  First, Bruhn and Joos~\cite{BruhnJ} showed that 
$\chi^s(G)\le 1.93\DeltaG^2$.  
Bonamy, Perrett, and Postle~\cite{BPP} strengthened this to $\chi^s(G)\le 1.835\DeltaG^2$.
\label{1.772-anchor} Most recently Hurley, de Joannis de Verclos, and
Kang~\cite{HdjdVK} proved that $\chi^s(G)\le 1.772\DeltaG^2$.  
Davies, Kang, Pirot, and Sereni~\cite{DKPS} also developed a general framework for
deriving coloring bounds from bounds on the sparseness of neighborhoods.
Their result has several applications, including Johansson's famous bound 
on the chromatic number of triangle-free graphs, but it does not seem to apply 
for squares of line graphs.  

Recall that Molloy and Reed, in the proof of Theorem~\ref{thm:MR-strong},
constructed their partial coloring of $H$ by coloring each vertex uniformly
at random from $\{1,\ldots,\Delta(G)^2\}$ and then uncoloring every vertex that
got the same color as some neighbor.
This approach is called the Na\"{i}ve Coloring Procedure.  It was introduced by
Kahn~\cite{Kahn} in his proof that the list-coloring conjecture is true
asymptotically, and Molloy and Reed~\cite{MolloyR-GCPM} presented many other examples
of this technique.  The improvements on Theorem~\ref{thm:MR-strong} given in
\cite{BruhnJ}, \cite{BPP}, and~\cite{HdjdVK} all rely crucially on this method.
In~\cite{BruhnJ}, rather than uncoloring every vertex that got the same color as
some neighbor, Bruhn and Joos flipped a coin for each such conflict, and only
uncolored the vertices that lost at least one coin flip.  They also proved
stronger upper bounds on the density of each subgraph $G^2[N(v)]$ and proved
stronger bounds on the number of colors repeated in each neighborhood using 
Inclusion-Exclusion. % principle.  

In~\cite{BPP}
and~\cite{HdjdVK}, the authors iterated the Na\"{i}ve Coloring Procedure,  
%That is, they 
repeatedly using it to color a small fraction of the uncolored
vertices.  At each iteration, the number of colors available for each uncolored
vertex $v$ decreased; but the number of uncolored neighbors of each such $v$
decreased faster.  (For each $v$, this property holds with high
probability, so the %{\lovasz} 
Local Lemma guarantees that for some partial coloring
this property holds for all vertices, simultaneously.) Finally, each vertex
remaining uncolored had fewer uncolored neighbors than available colors, so the
partial coloring could be finished greedily.  

This iterative approach is often called the semi-random method or the R\"{o}dl Nibble
(each iteration is a nibble).  The R\"{o}dl Nibble has been applied successfully
to a wide range of problems in combinatorics.  Presenting further details is
beyond the scope of this survey.  However, we direct the interested reader to a
recent survey~\cite{nibble-survey} on this topic, as well as to~\cite[Section
V]{MolloyR-GCPM}.  Before leaving this topic, we note that variations of this
Na\"{i}ve Coloring Procedure also play central roles in the proofs of
Theorems~\ref{HvdHMR-thm} and~\ref{thm:MolloyR-total}.

\subsection{Subcubic Graphs}
\label{strong-subcubic-sec}
Within the subject of strong edge-coloring,
another line of research has focused on the case $\Delta=3$.  Since
$\chi^s(G)\le 10$ for all such graphs, we seek conditions to imply
$\chi^s(G)\le 9$ (resp. 8, 7, 6, and 5).   
Faudree, Schelp, Gy\'{a}rf\'{a}s, and Tuza~\cite{FaudreeSGT90} posed the
following nice sequence of conjectures for all subcubic graphs $G$.  For easy
reference, we present it as a single conjecture with six parts; 
the first 4 of these have been confirmed, the 6th was recently disproved, and the 5th
is still open.

\begin{conjecture}[\cite{FaudreeSGT90}]
\label{FSGT-conj}
\label{6part-conj}
If $G$ is a graph with $\Delta=3$, then the following six bounds hold.
\begin{itemize}
\setlength\itemsep{.05em}
\item[(a)] $\chi^s(G)\le 10$.  (Andersen~\cite{Andersen92} and Horak,
He, and Trotter~\cite{HorakHT93}.)
\item[(b)] $\chi^s(G)\le 9$ if $G$ is bipartite.  (% Confirmed by
Steger and Yu~\cite{StegerY93}.)
\item[(c)] $\chi^s(G)\le 9$ if $G$ is planar. (%Confirmed by 
Kostochka, Li, Ruksasakchai, Santana, Wang, and Yu~\cite{KostochkaLRSWY}.)
\item[(d)] $\chi^s(G)\le 6$ if $G$ is bipartite and no 3-vertices are adjacent.
(%Confirmed by 
Wu and Lin~\cite{WuL08}.)
\item[(e)] $\chi^s(G)\le 7$ if $G$ is bipartite with no 4-cycle.
\item[(f)] $\chi^s(G)\le 5$ if $G$ is bipartite with girth sufficiently large.
(Disproved by Lu\v{z}ar, Ma\v{c}ajov\'{a}, \v{S}koviera, and
Sot\'{a}k~\cite{LMSS}; also by Cranston~\cite{cranston-FSGT-ce}.)
\end{itemize}
\end{conjecture}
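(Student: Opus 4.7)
My plan is to address the six parts with tailored approaches, unified by the identity $\chi^s(G)=\chi^2(L(G))$ and by the observation that, since $L(G)$ has maximum degree $4$ when $G$ is subcubic, greedy coloring gives only the trivial upper bound $\chi^s(G)\le 13$. To prove (a), I would adapt the ordering approach mentioned earlier in the excerpt: pick a carefully chosen root vertex $v$, color the edges greedily in order of non-increasing distance in $G$ from $v$, and note that each edge away from $v$ then has at most $2\Delta^2-3\Delta=9$ already-colored conflicting neighbors in $(L(G))^2$. This leaves only the edges near $v$ to handle by a finite structural case analysis, with the choice of $v$ being the crux.

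For the bipartite parts (b), (c), (d), I would combine this greedy backbone with structural reductions that save colors. In (b), bipartiteness eliminates triangles, so for any edge $uv$ the two ``halves'' $N(u)\setminus\{v\}$ and $N(v)\setminus\{u\}$ of its $(L(G))^2$-neighborhood are disjoint, and a parity or matching argument should gain a color. In (d), forbidding adjacent $3$-vertices makes $G$ essentially a subdivision of a cubic graph, so every edge has at least one degree-$\le 2$ endpoint and hence far fewer constraints; I would use discharging to force a reducible configuration (a $3$-vertex with two degree-$2$ neighbors whose outer neighbors satisfy additional sparsity) and extend a partial $6$-coloring. For the planar case (c), discharging on faces via Euler's formula yields reducible configurations involving small faces, though the bottleneck is a lengthy case enumeration comparable to that in~\cite{KostochkaLRSWY}.

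The genuinely open part is (e). Here I would again use discharging, exploiting that in a bipartite subcubic graph with no $4$-cycle, any two adjacent vertices have no other common neighbor, so the six vertices at distance $2$ from any edge $uv$ with $d(u)=d(v)=3$ are all distinct and pairwise non-adjacent. Candidate reducible configurations include a $3$-vertex with at least one degree-$\le 2$ neighbor, and a pair of adjacent $3$-vertices whose surrounding edges permit matching-based color reuse; the goal is to show that every minimum counterexample avoids all such configurations and then derive a contradiction by discharging on the local density of $(L(G))^2$. The main obstacle is that forbidding $4$-cycles is a relatively weak constraint: it rules out the densest local patterns but leaves enough freedom that an extensive library of reducible configurations must be built, and a probabilistic or nibble attack in the style of~\cite{MolloyR-strong} is unlikely to help at $\Delta=3$, where such methods cannot beat the greedy bound. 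For the disproved part (f), I would instead search for counterexamples in the spirit of~\cite{LMSS,cranston-FSGT-ce}, using high-girth bipartite graphs in which global parity or incidence obstructions prevent any edge $5$-coloring regardless of how large the girth is chosen.
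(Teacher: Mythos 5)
There is a fundamental mismatch here: the statement you are asked to ``prove'' is a \emph{conjecture} (due to Faudree, Schelp, Gy\'arf\'as, and Tuza), and the survey offers no proof of it --- it merely records which parts have been confirmed elsewhere ((a) by Andersen and by Hor\'ak--He--Trotter, (b) by Steger and Yu, (c) by Kostochka et al., (d) by Wu and Lin), notes that (e) remains open, and notes that (f) has been \emph{disproved} by Lu\v{z}ar--Ma\v{c}ajov\'a--\v{S}koviera--Sot\'ak and by Cranston. Consequently no proof proposal for the full six-part statement can succeed: part (f) is false, and part (e) is a genuinely open problem, so your discharging plan for (e) is not a proof but a research program whose central difficulty (building a complete library of reducible configurations under the weak constraint of forbidding $4$-cycles) is exactly the obstacle that has kept the problem open. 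You do acknowledge both of these facts, which is to your credit, but it means your text is a survey of strategies rather than a proof of the statement.

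On the parts that are true, your sketches track the known arguments at a high level but stop well short of proofs. For (a), your greedy-by-distance-from-$v$ outline with the bound $2\Delta^2-3\Delta+1=10$ for edges away from $v$ matches the approach the survey attributes to Andersen and to Cranston and to Huang--Santana--Yu, and correctly identifies that the hard work is choosing $v$ and finishing near it; but that finishing step is the entire content of those papers. For (b), the claim that bipartiteness alone lets ``a parity or matching argument'' save a color is not substantiated --- disjointness of $N(u)\setminus\{v\}$ and $N(v)\setminus\{u\}$ already holds in any triangle-free graph and does not by itself reduce the count below $10$. For (d), your reading of the hypothesis is slightly off: ``no two $3$-vertices are adjacent'' does not make $G$ a subdivision of a cubic graph (degree-$2$ vertices may be adjacent to each other, and $1$-vertices may occur), though the spirit of your discharging plan is reasonable. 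For (c), you correctly anticipate that the proof is a lengthy case enumeration, but again this is a pointer to the literature, not an argument. In short: the proposal contains no completable proof of any part, and the statement as a whole is known to be false because of (f).
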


\begin{figure}[!h]
\centering
\begin{tikzpicture}[thick, scale=.8]
\tikzstyle{uStyle}=[shape = circle, minimum size = 5.5pt, inner sep = 0pt,
outer sep = 0pt, draw, fill=white, semithick]
\tikzset{every node/.style=uStyle}

\begin{scope}[yscale=.8]
\foreach \x/\y/\lab in {0/0/A, 0/1/B, 1/0/C, 1/1/D, 2/1.8/E, .5/2.6/F, -1/1.8/G}
\draw (\x,\y) node (\lab) {};

\draw (C) -- (A) -- (D) -- (B) -- (C) -- (E) -- (F) -- (G) -- (A) (G) -- (B) (D) -- (E);
\end{scope}
% (a)

\begin{scope}[xshift = 2.25in, yshift = -.4in, yscale=1.05, xscale=1.1, rotate=90]
\foreach \top in {1,2,3}
\draw (\top,2) node (t\top) {};
\foreach \bottom in {1,2,3}
\draw (\bottom,1) node (b\bottom) {};

\foreach \top in {1,2,3}
{
\foreach \bottom in {1,2,3}
  \draw (t\top) -- (b\bottom);
}
\end{scope}
% (b)

\begin{scope}[xshift=2.5in, yshift=1, yscale=.705, xscale=1.1]
\foreach \x/\y/\lab in {0/0/A, 0/3/B, 1/0/C, 1/3/D, .5/1/E, .5/2/F}
\draw (\x,\y) node (\lab) {};

\draw (A) -- (B) -- (D) -- (C) -- (A) -- (E) -- (F) -- (B) (F) -- (D) (E) -- (C);

\end{scope}
% (c)

\begin{scope}[xshift=3.90in, yshift=1, yscale=.705, xscale=0.8]
\foreach \x/\y/\lab in {0/0/A, -1/1.5/B, 0/1.5/C, 1/1.5/D, 0/3/E}
\draw (\x,\y) node (\lab) {};

\draw (A) -- (B) -- (E) -- (C) -- (A) -- (D) -- (E);
\end{scope}
% (d)

\begin{scope}[xshift=5.20in, yshift=.425in, scale=1.1]
\tikzstyle{blavert}=[circle, draw, fill=black, inner sep=0pt, minimum width=6pt,semithick]
\tikzstyle{redvert}=[circle, draw, fill=white, inner sep=0pt, minimum width=6pt,semithick]
\draw \foreach \i in {2, 4, ..., 14}
{
(\i*360/14:1) node[blavert]{} -- (\i*360/14+360/14:1)
(\i*360/14+360/14:1) node[redvert]{} -- (\i*360/14+720/14:1)
(\i*360/14:1)  -- (\i*360/14+5*360/14:1)
};
\end{scope}
% (e)

\begin{scope}[xshift=6.25in, yshift=1, yscale=.705, xscale=1.1]
\foreach \x/\y/\lab in {0/0/A, 0/3/B, 1/0/C, 1/3/D, .5/1/E, .5/2/F}
\draw (\x,\y) node (\lab) {};

\draw (A) -- (E) -- (F) -- (B) (F) -- (D) (E) -- (C);

\end{scope}
% (f)

\foreach \x/\lab in {0.5/a, 4.05/b, 6.92/c, 9.90/d, 13.2/e, 16.4/f}
\draw (\x,-.8) node[draw=none] (\lab) {\footnotesize{(\lab)}};

\end{tikzpicture}
\captionsetup{width=.9\linewidth}
\caption{Graphs illustrating that each part of Conjecture~\ref{6part-conj} is
best possible.  In each case except (e), the square of the line graph is a
clique with order equal to the conjectured upper bound.  In contrast, (e) is the
Heawood graph $H$ with 21 edges.  Now no color can be used on more than 3 edges, so
$\chi^s(H)\ge 21/3=7$. 
\label{subcubic-sharpness-figs}}
\end{figure}

Wu and Lin~\cite{WuL08} proved part (d) in a stronger form.  Rather than
requiring the graph to be bipartite, they only forbid a single graph $H$, formed
from a 5-cycle by adding a path of length 2 joining two non-adjacent vertices.
The disproof of part (f) more generally characterizes~\cite{LMSS} the
$k$-regular graphs $G$ with $\chi^s(G)=2k-1$; these are graphs which cover the
Kneser graph $K(2k-1,k-1)$.  So, to disprove part (f), the authors construct
cubic bipartite graphs of arbitrarily large girth that have no such cover.
An alternative disproof of (f) is given in~\cite{cranston-FSGT-ce}.

We suspect that the bounds in the first 5 parts of Conjecture~\ref{6part-conj} also
hold in the context of list coloring (and possibly even paintability), but we are
unaware of many results in this direction.
One notable exception is recent work~\cite{LMSS26} of 
Lu\v{z}ar, M\'{a}\v{c}ajov\'{a}c
Sot\'{a}k, and \v{S}vecov\'{a}.
Relying largely on the Combinatorial Nullstellensatz,
they proved that every subcubic graph $G$ satisfies $\chi^s_{\ell}(G)\le 10$.

It is natural to ask: When do we have $\chil^s(G)=\chi^s(G)$?
In fact, \cite{DWYY} asked whether this is true for all graphs.
This was answered negatively by Lu\v{z}ar et al.~\cite{LMSS26}.
(Independently, Hasanvand~\cite[Theorem~3.5]{hasanvand} found list assignments showing
that $\chil^s(P(5k,2))>5$, where $P(n,2)$ denotes the $2n$-vertex generalized
Petersen graph.  It is known that $\chi^s(P(5k,2))=5$, so this provides another
infinite class of graphs witnessing a negative answer to the above question.)
Lu\v{z}ar et al. showed that $\chi^s(P)=5$, but $\chil^s(P)=7$, where $P$ is the Petersen graph.
Proving the first inequality is straightforward, but proving the second is harder.
However, their proof~\cite[Theorem~1.7]{LMSS26} that $\chil^s(P)>5$ is short and elegant.
More generally, building on the characterization~\cite{LMSS} of $k$-regular graphs $G$ with 
$\chi^s(G)=2k-1$, they conjectured that every such $G$ satisfies $\chil^s(G)>2k-1$.

Many of the parts of Conjecture~\ref{6part-conj} are now believed to hold in 
stronger forms.
\begin{conjecture} Let $G$ be a subcubic graph.
\label{subcubic-stronger-conj}
\begin{itemize}
\item[(a)]\cite{HLL} If $G$ is bridgeless and is neither the Wagner graph nor
the graph formed from the complete bipartite graph $K_{3,3}$ by subdividing one
edge, then $\chi^s(G)\le 9$.
\item[(b)]\cite{LMSS} If $G$ is bridgeless and $|V(G)|\ge 13$, then $\chi^s(G)\le 8$.
\item[(c)]\cite{LMSS} If $G$ has girth at least 5, then $\chi^s(G)\le 7$.
\end{itemize}
\end{conjecture}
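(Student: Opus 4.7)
The plan is to approach all three parts by the standard minimum-counterexample-plus-discharging framework that has driven prior progress on subcubic strong edge-coloring. For each part, take a minimum (under $|V(G)|+|E(G)|$) counterexample $G$, derive a list of reducible subgraphs via precolored-deletion arguments, and design discharging rules showing these reducible configurations cannot be avoided under the hypothesis. A useful uniform tool is that in a subcubic graph, each edge has at most $12$ other edges within strong-edge distance $2$, so every vertex of $L(G)^2$ has degree at most $12$; understanding when this bound drops (due to short cycles, parallel paths, or small separators) drives the choice of reducible configurations. It is also natural to attempt to upgrade to the list-coloring analogue, since list arguments tend to produce cleaner exchange arguments.

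For part (a), I would strengthen Andersen's proof that $\chi^s(G)\le 10$ by using bridgelessness to exclude certain pendant-like substructures and to guarantee useful global structure (e.g.\ a perfect matching via Petersen's Theorem when $G$ is cubic). The aim is to save one color by exhibiting, around any vertex of degree $2$ or along any short cycle, an alternating exchange in $L(G)^2$ that frees up a repeated color. A case analysis should then show that only the Wagner graph and the subdivided $K_{3,3}$ escape this exchange.

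Part (b) sharpens the target to $\chi^s(G)\le 8$; after rerunning the part-(a) analysis, we expect the only remaining obstructions to be bridgeless subcubic graphs on at most $12$ vertices with $\chi^s > 8$. These should form a small, computable list, and the task reduces to showing that a larger bridgeless subcubic graph either avoids each such \emph{bad} local piece, or admits a local surgery (e.g.\ replacing a small $2$-edge-cut piece by a simpler gadget) without increasing $\chi^s$. For part (c), the girth-$5$ hypothesis eliminates $3$- and $4$-cycles, so no two edges incident to a common vertex share a second common neighbor; this drops the effective degree in $L(G)^2$ well below $12$ and provides the slack needed for a sparser, local discharging argument, combined (in the planar case) with the bound $\mad(G)<\tfrac{10}{3}$ from Lemma~\ref{lem:folklore}.

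The main obstacle, across all three parts, is that strong edge-coloring reducibility is intrinsically second-neighborhood: extending a coloring from $G-e$ to $G$ requires avoiding up to $12$ forbidden values on $e$, and each saved color depends on rerouting several simultaneous constraints. Configurations that save $1$, $2$, or $3$ colors therefore become structurally large and the rerouting arguments intricate. Moreover, the extremal graphs (the Wagner graph, the subdivided $K_{3,3}$, the small bridgeless cubic graphs with $\chi^s > 8$, and the girth-$5$ Kneser covers of~\cite{LMSS}) sit right at the boundary of the conjectured bounds, leaving essentially no room for a slick generic argument; any complete proof will almost certainly require an involved, likely computer-assisted, case analysis at the small-order end.
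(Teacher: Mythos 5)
The statement you are trying to prove is a \emph{conjecture}, not a theorem: the paper states Conjecture~\ref{subcubic-stronger-conj} without proof, attributes parts (a)--(c) to \cite{HLL} and \cite{LMSS}, and explicitly lists all three parts among the open problems at the end of the survey (indeed, part (c) strengthens Conjecture~\ref{6part-conj}(e), which the paper notes ``still remains open''). So there is no proof in the paper to compare against, and your proposal does not supply one either: it is a research program, not an argument. Concretely, none of the load-bearing steps is carried out. For (a) you assert that bridgelessness yields an ``alternating exchange'' freeing a repeated color, but no such exchange is exhibited, and no case analysis is given showing that only the Wagner graph and the subdivided $K_{3,3}$ escape it --- this is precisely the hard content of the conjecture. (Note also that the subdivided $K_{3,3}$ has a degree-2 vertex, so the appeal to Petersen's theorem for a perfect matching does not cover the general bridgeless subcubic case.) For (b) the ``small, computable list'' of bridgeless obstructions on at most 12 vertices is never computed, and the claimed 2-edge-cut surgery that preserves $\chi^s\le 8$ is not specified or verified. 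For (c), the hypothesis is girth at least 5 with no planarity assumption, so Lemma~\ref{lem:folklore} and the bound $\mad(G)<\tfrac{10}{3}$ are unavailable; a discharging argument on an arbitrary subcubic girth-5 graph has no sparsity to discharge against, and the known extremal examples (covers of the Kneser graph $K(5,2)$, i.e.\ the Petersen graph and its covers, which attain $\chi^s=2\cdot 3-1=5$ only in the sense characterized in \cite{LMSS}) show the bound 7 is tight in a way your sketch does not engage with. Your own closing paragraph concedes that a complete proof would require an involved, likely computer-assisted case analysis that you have not performed; as written, the proposal establishes none of (a), (b), or (c).
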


Note that Conjecture~\ref{subcubic-stronger-conj}(a)
strengthens all of Conjecture~\ref{6part-conj}(a,b,c).
Conjecture~\ref{subcubic-stronger-conj}(b) even further strengthens
Conjecture~\ref{subcubic-stronger-conj}(a).
And Conjecture~\ref{subcubic-stronger-conj}(c) strengthens
Conjecture~\ref{6part-conj}(e), which still remains open.
The authors of~\cite{LMSS} also ask whether there exists a girth $g_0$ such that
every cubic graph $G$ with girth at least $g_0$ satisfies $\chi^s(G)\le 6$.

In the next section, we discuss bounds on $\chi^s$ for graphs with bounded
maximum average degree.  However, when $\Delta\in\{3,4\}$, the results and
proofs tend to be different from the general case.  So we include such work
here.

Assume that $\Delta=3$.  Hocquard and Valicov~\cite{HocquardV11} 
showed that if
$\mad(G)<\frac{36}{13}$ (resp. $\frac{13}5$, $\frac{27}{11}$,
$\frac{15}7$), then $\chi^s(G)\le 9$ (resp. 8, 7, and 6).  A few years later,
Hocquard, Montassier, Raspaud, and Valicov~\cite{HocquardMRV13} weakened the
hypotheses on $\mad(G)$.
They showed that if $\mad(G)<\frac{20}{7}$ (resp. $\frac{8}3$, $\frac52$,
$\frac{7}3$), then $\chi^s(G)\le 9$ (resp. 8, 7, and 6).  For all but the bound
implying $\chi^s(G)\le 8$, they gave constructions (each with at most eight vertices)
showing that the bound on $\mad(G)$ is sharp.  This problem has also 
been studied for list coloring.  Ma, Miao, Zhu, Zhang, and Luo~\cite{MaMZZL13}
showed that if $\mad(G)<\frac{36}{13}$ (resp. $\frac{13}5$, $\frac{27}{11}$,
$\frac{15}7$), then $\chil^s(G)\le 9$ (resp. 8, 7, 6).  
These bounds were improved by Zhu and Miao~\cite{ZhuM14}, who
showed that if $\mad(G)<\frac{14}{5}$ (resp. $\frac{8}3$, $\frac{5}{2}$), then
$\chil^s(G)\le 9$ (resp. 8, 7).  

Now assume that $\Delta=4$.  Bensmail, Bonamy, and Hocquard~\cite{BBH15} showed
that if $\mad(G)<\frac{19}5$ (resp. $\frac{18}5$, $\frac{17}5$, $\frac{10}3$,
$\frac{16}5$), then $\chi^s(G)\le 20$ (resp.~19, 18, 17, and 16).
Lv, Li, and Yu~\cite{LLY16} weakened these hypotheses, showing that if
$\mad(G)<\frac{51}{13}$
(resp. $\frac{15}4$, $\frac{18}5$, $\frac{7}2$, $\frac{61}{18}$), then
$\chi^s(G)\le 20$ (resp.~19, 18, 17, and 16).

\subsection{Planar Graphs and Bounded Maximum Average Degree}
\label{sec-chis-planar-girth}

Since planar graphs are sparse, we expect that they should satisfy a stronger
upper bound on $\chi^s$.  Indeed, Faudree, Schelp, Gy\'{a}rf\'{a}s, and
Tuza~\cite{FaudreeSGT90} combined Vizing's Theorem
and the Four Color Theorem to prove, for every planar graph $G$, that $\chi^s(G)\le 4\Delta+4$.
They also gave the following construction; see Figure~\ref{sharpness-figs}(a).
Take two copies of $K_{2,\Delta-2}$ and identify the
vertices of a 4-cycle in one copy with the vertices of a 4-cycle in the other
(so the resulting graph $G$ has maximum degree $\Delta$).  Note %It is easy to check
that $G$ is planar, that $|E(G)|=4\Delta-4$, and that a strong edge-coloring of
$G$ must give all edges distinct colors.  Thus, $\chi^s(G)=4\Delta-4$.
So, the theorem below is nearly sharp.

\begin{theorem} [\cite{FaudreeSGT90}]
If $G$ is a planar graph, then $\chi^s(G)\le4\Delta+4$.   If also $\Delta\ge 7$,
then $\chi^s(G)\le4\Delta$.
\label{FSGT90-planar}
\end{theorem}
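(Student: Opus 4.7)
The plan is to combine Vizing's Theorem with the Four Color Theorem via a product construction on colors. By Vizing's Theorem, $G$ admits a proper edge coloring $\varphi \colon E(G) \to \{1,\ldots,\Delta+1\}$ with matching color classes $M_1, \ldots, M_{\Delta+1}$. I will assign to each edge $e$ the combined color $(\varphi(e), \psi(e))$, where $\psi \colon E(G) \to \{1,2,3,4\}$ is defined below; this yields a coloring with at most $4(\Delta+1) = 4\Delta + 4$ colors.

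For each $i$, I would build an auxiliary graph $H_i$ on vertex set $M_i$, making $e, f \in M_i$ adjacent iff they lie at distance exactly $2$ in the line graph $L(G)$ (equivalently, some edge of $G$ joins an endpoint of $e$ to an endpoint of $f$). The central step is to show that $H_i$ is planar. To see this, contract every edge of $M_i$ in $G$ to obtain a planar multigraph $G/M_i$; each $e \in M_i$ becomes a single vertex $v_e$, and any edge witnessing the adjacency of $e, f$ in $H_i$ becomes an edge $v_e v_f$ in $G/M_i$. Hence $H_i$ embeds as a subgraph of the simple underlying graph of $G/M_i$ induced on $\{v_e : e \in M_i\}$, and is therefore planar. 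By the Four Color Theorem, $H_i$ admits a proper $4$-coloring $\psi_i$; I then set $\psi(e) := \psi_{\varphi(e)}(e)$.

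To check that $(\varphi, \psi)$ is a strong edge coloring, suppose distinct edges $e_1, e_2$ receive the same pair. Then $\varphi(e_1) = \varphi(e_2) = i$ forces $e_1, e_2 \in M_i$, so they share no endpoint; and $\psi_i(e_1) = \psi_i(e_2)$ forces $e_1, e_2$ nonadjacent in $H_i$, so no edge of $G$ joins their endpoints. Thus $d_{L(G)}(e_1, e_2) \ge 3$, proving $\chi^s(G) \le 4\Delta + 4$. For the stronger bound when $\Delta \ge 7$, I would invoke Vizing's planar-graph edge coloring theorem (proved by Vizing for $\Delta \ge 8$ and extended to $\Delta = 7$ by Sanders--Zhao and independently Zhang): every planar graph with $\Delta \ge 7$ has $\chi'(G) = \Delta$. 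Running the same construction with a $\Delta$-edge coloring in place of the $(\Delta+1)$-edge coloring gives $\chi^s(G) \le 4\Delta$.

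The main obstacle is the planarity of $H_i$. Contracting a matching can create loops and parallel edges, and one must note that $H_i$, being a simple graph, is only required to embed in the \emph{simple} graph obtained from $G/M_i$ after suppressing these. Equivalently, $H_i$ is a minor of $G$ (contract $M_i$, then delete the non-$M_i$ vertices), so planarity follows from the fact that minors of planar graphs are planar. Achieving the refined bound $4\Delta$ further relies on the nontrivial Vizing--Sanders--Zhao--Zhang theorem rather than on a self-contained argument.
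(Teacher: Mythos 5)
Your proposal is correct and follows essentially the same argument as the paper: a Vizing $(\Delta+1)$-edge-coloring (or a $\Delta$-edge-coloring via Sanders--Zhao/Zhang when $\Delta\ge 7$), followed by contracting each matching class and applying the Four Color Theorem to the resulting planar minor to refine each class into four strong classes. Your treatment is only slightly more explicit about why the contracted auxiliary graph is planar and why the combined coloring is strong.
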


\begin{proof}
By Vizing's Theorem, $G$ has an edge-coloring with at most $\Delta+1$ colors;
call these $c_1,\ldots,c_{\Delta+1}$.  For each color $c_i$, form $G_i$ from $G$
by contracting each edge colored $c_i$.  Since $G_i$ is planar, its vertices
have a 4-coloring; call its colors $b_{i,1},b_{i,2},b_{i,3},b_{i,4}$.  Since the
4-coloring is proper, each pair of edges colored $b_{i,j}$ (for some choice of
$i$ and $j$) must be distance at least two apart in $G$.  Thus, to get a strong
edge-coloring of $G$, we color each edge with its color $b_{i,j}$.
This uses at most $4(\Delta+1)$ colors.
If $\Delta\ge 7$, then $G$ is class 1, i.e., $G$ has an edge-coloring with only
$\Delta$ colors~\cite{SandersZhao-Delta7,Vizing65}; see also
\cite[Theorem 5.3]{CranstonW-guide}.
Thus, the proof above now yields $\chi^s(G)\le 4\Delta$.
\end{proof}

\begin{figure}[!t]
\centering
\begin{tikzpicture}[scale=.6, thick]
\tikzstyle{uStyle}=[shape = circle, minimum size = 5.5pt, inner sep = 0pt,
outer sep = 0pt, draw, fill=white, semithick]
\tikzset{every node/.style=uStyle}

\begin{scope}[yscale=.85,xshift=.25in]
\foreach \x/\y/\lab in {0/0/A, 1/2/B, 1/-2/C, 2/0/D, 6/0/E}
\draw (\x,\y) node (\lab) {};
\draw (B) -- (A) -- (C) -- (D) -- (B) -- (E) -- (C);

\foreach \height in {1,-1}
\draw (A) -- (1,\height) node {} -- (D);

\foreach \width in {3,5}
\draw (B) -- (\width,0) node {} -- (C);

\draw (1,.15) node[draw=none] {$\vdots$};
\draw (3.9,0) node[draw=none, shape=rectangle] {$\cdots$};
\draw (3,-4.10cm) node[draw=none] {\footnotesize{(a)}};
\end{scope}

\begin{scope}[xshift=4.5in]
\def\rad{.85}
\def\orad{1.25cm}
\draw (-\rad,-\rad) node (A) {} -- (-\rad,\rad) node (B) {} -- (\rad,\rad) node
(C) {} -- (\rad,-\rad) node (D) {} -- (A) -- (C) (D) -- (B);

\draw (A) ++ (190:\orad) node {} -- (A) --++ (260:\orad) node {} 
(228.5:2.8cm) node[draw=none, fill=none, label={[rotate=-45]\small{$\dots$}}] {};
\draw (B) ++ (100:\orad) node {} -- (B) --++ (170:\orad) node {}
(137:1.75cm) node[draw=none, fill=none, label={[rotate=45]\small{$\dots$}}] {};
\draw (C) ++ (80:\orad) node {} -- (C) --++ (10:\orad) node {}
(40:1.8cm) node[draw=none, fill=none, label={[rotate=-45]\small{$\dots$}}] {};
\draw (D) ++ (350:\orad) node {} -- (D) --++ (280:\orad) node {}
(313:2.75cm) node[draw=none, fill=none, label={[rotate=45]\small{$\dots$}}] {};
\draw (0,-3.5cm) node[draw=none] {\footnotesize{(b)}};
\end{scope}

\begin{scope}[xshift=7.5in, yshift=-.5cm]
\def\rad{1.2cm}
\def\orad{1.3cm}
\draw (90:\rad) node (A) {} -- (330:\rad) node (B) {} -- (210:\rad) node (C) {} -- (A);

\draw (A) ++ (60:\orad) node {} -- (A) --++ (120:\orad) node {}
(90:1.8cm) node[draw=none, fill=none, label={[rotate=0]\small{$\ldots$}}] {};
\draw (B) ++ (360:\orad) node {} -- (B) --++ (300:\orad) node {}
(327:2.825cm) node[draw=none, fill=none, label={[rotate=60]\small{$\ldots$}}] {};
\draw (C) ++ (180:\orad) node {} -- (C) --++ (240:\orad) node {}
(213:2.825cm) node[draw=none, fill=none, label={[rotate=-60]\small{$\ldots$}}] {};
\draw (0,-3cm) node[draw=none] {\footnotesize{(c)}};
\end{scope}

\end{tikzpicture}
\caption{Figures (a), (b), and (c) illustrate, respectively, that
Theorem~\ref{FSGT90-planar} is nearly sharp, that Theorem~\ref{CKKR-thm}(i) is
sharp, and that Theorem~\ref{CKKR-thm}(iii) is sharp.\label{sharpness-figs}}
\end{figure}

When $G$ has girth at least 7, we can use Gr\"{o}tzsch's Theorem to improve the
bound~\cite{HudakLSS} in Theorem~\ref{FSGT90-planar} to $\chi^s(G)\le 3\Delta$. 
When also $\Delta\ge 4$, Ruksasakchai and Wang~\cite{RW} strengthened this to
$\chil^s(G)\le 3\Delta$.

Hud\'{a}k, Lu\v{z}ar, Sot\'{a}k, and \v{S}krekovski~\cite{HudakLSS} showed that if
$G$ is planar with girth at least 6, then $\chi^s(G)\le 3\Delta+5$.  
Bensmail, Harutyunyan, Hocquard, and Valicov~\cite{BensmailHHV14} strengthened
this bound to $\chi^s(G)\le 3\Delta+1$.
The current strongest bounds in this direction are due to Choi, Kim, Kostochka and
Raspaud~\cite{CKKR} and Li, Li, Lv, and Wang~\cite{LLLW23}.
\begin{theorem}
\label{CKKR-thm}
(i)\cite{CKKR} If $\mad(G)<3$ and $\Delta\ge 7$, then $\chi^s(G)\le 3\Delta$.  
(ii)\cite{LLLW23} If $\mad(G)<\frac{26}9$ and $\Delta\ge 7$, then $\chi^s(G)\le 3\Delta-1$.
(iii)\cite{CKKR} If $\mad(G)<\frac83$ and $\Delta\ge 9$, then $\chi^s(G)\le 3\Delta-3$.
\end{theorem}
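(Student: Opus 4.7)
The plan is to prove all three parts by the discharging method, in the standard template for strong edge-coloring bounds under $\mad$ hypotheses. I will describe the approach for part~(i); parts (ii) and (iii) follow the same skeleton with progressively more intricate reducible configurations to exploit the tighter $\mad$ bounds.

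First, let $G$ be a minimum counterexample to (i), minimum with respect to $|V(G)|+|E(G)|$. Then $\mad(G)<3$, $\Delta(G)=\Delta$ for some $\Delta\ge 7$, and $\chi^s(G)>3\Delta$, while every proper subgraph of $G$ admits a strong edge-coloring with $3\Delta$ colors. The basic counting principle is that an edge $uv$ has at most $(d(u)-1)+(d(v)-1)+\sum_{x\in N(u)\setminus\{v\}}(d(x)-1)+\sum_{y\in N(v)\setminus\{u\}}(d(y)-1)$ other edges at strong distance at most $1$, which is bounded above by $(d(u)+d(v)-2)\Delta$. In particular, if $d(u)+d(v)\le 4$, then $uv$ can always be recolored greedily in any partial strong edge-coloring that uses $3\Delta$ colors.

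Next I would build a list of reducible configurations. Base cases: no vertex has degree $0$ or $1$, and no 2-vertex is adjacent to another 2-vertex (since the two edges of any such short ``thread'' can be recolored after uncoloring). More delicate configurations should involve 3-vertices with several 2-neighbors, and 2-vertices whose neighbors jointly have small degree. The extension step for each configuration typically deletes one or two edges (or a low-degree vertex), applies minimality of $G$ to the remainder, and then either colors the missing edges greedily or, when the color count is tight, finishes via a bipartite matching on the ``lists'' of still-available colors for the deleted edges. For part~(iii), which asks for only $3\Delta-3$ colors, the list-extension arguments become noticeably harder; this is likely why the stronger hypothesis $\Delta\ge 9$ is needed, so that lists remain large enough for Hall-style arguments to succeed.

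Then comes the discharging phase. Assign each $v\in V(G)$ initial charge $\mu(v):=d(v)-3$ (for part~(i)); by hypothesis $\sum_v\mu(v)=2|E(G)|-3|V(G)|<0$. The discharging rules push charge from vertices of degree $\ge 3$ to adjacent 2-vertices (for example, $\tfrac{1}{2}$ across each edge to a 2-neighbor, with further refinements depending on the number of 2-neighbors and the degrees of the ``thread endpoints''). Using the list of reducible configurations, one shows every vertex has nonnegative final charge, contradicting the negative global sum. For parts~(ii) and~(iii), one instead starts with $\mu(v):=d(v)-\tfrac{26}{9}$ and $\mu(v):=d(v)-\tfrac{8}{3}$ respectively, and the rules are re-tuned, with more charge-saving configurations declared reducible.

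The main obstacle I expect is assembling a complete list of reducible configurations whose exclusion exactly suffices for the discharging to close. Reducibility arguments for strong edge-coloring are technically awkward because removing an edge $uv$ alters the constraints on all edges within distance $2$ of $uv$; one must track lists of forbidden colors carefully and often resort to Hall's theorem or direct case analysis on which color classes are already in use near the deleted structure. Balancing the strength of the reductions against the tightness of the discharging---so that every vertex of degree $\ge 3$ can both pay out to its low-degree neighbors and still end nonnegative---is the delicate bookkeeping at the heart of the proof.
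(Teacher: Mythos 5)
The paper does not prove this theorem: it is a survey entry, stated with citations to Choi--Kim--Kostochka--Raspaud \cite{CKKR} and Li--Li--Lv--Wang \cite{LLLW23}, so there is no in-paper proof to compare against. Your outline does correctly identify the method those papers use (minimal counterexample, reducible configurations, discharging with initial charge $d(v)$ minus the $\mad$ threshold), and your global charge computation $\sum_v(d(v)-3)=2|E(G)|-3|V(G)|<0$ is right. But what you have written is a plan, not a proof: no reducible configuration beyond the trivial ones is exhibited or verified, no discharging rules are fixed, and the final-charge check is not carried out. You acknowledge this yourself, so the substance of the argument --- which in \cite{CKKR} runs to a long case analysis --- is entirely missing.

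There is also a concrete error in the one place where you do commit to a claim. You assert as a ``base case'' that a minimal counterexample has no $1$-vertices. By your own counting, a pendant edge $uv$ with $d(u)=1$ can conflict with up to $(d(v)-1)+\sum_{y\in N(v)\setminus\{u\}}(d(y)-1)\le(\Delta-1)\Delta$ previously colored edges, which vastly exceeds $3\Delta$ for $\Delta\ge7$, so deleting a pendant edge and extending greedily does not work. Indeed the sharpness examples discussed immediately after the theorem in this survey --- the graphs $G_{3,\Delta}$ and $G_{4,\Delta}$ formed by attaching $\Delta-t$ pendant edges to each vertex of $K_t$ --- are built almost entirely from $1$-vertices and show that pendant edges are precisely where the difficulty concentrates. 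Any correct proof must handle $1$-vertices (which carry charge $-2$ in your scheme) through the discharging and through nontrivial recoloring arguments, not dismiss them at the outset. Until the configuration list and the discharging verification are supplied, and the $1$-vertex claim is repaired, this cannot be accepted as a proof of the theorem.
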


All planar graphs of
girth at least 6 have $\mad<3$, so Theorem~\ref{CKKR-thm}(i) extends the result
above of Bensmail et al. when $\Delta\ge 7$ and strengthens the bound by 1.  The
hypothesis $\mad(G)<3$ is sharp, as follows.  Given integers $t$ and $\Delta$,
form $G_{t,\Delta}$ from $K_t$ by adding $\Delta-t+1$ pendant edges at each
vertex; see Figure~\ref{sharpness-figs}(b,c).  Note that
$\mad(G_{4,\Delta})=3$ and $\chi^s(G_{4,\Delta})=4\Delta-6$.  In a similar
vein, the above bound of $3\Delta-3$ in (iii) is sharp, since
$\chi^s(G_{3,\Delta})=3\Delta-3$ and $\mad(G_{3,\Delta})=2$.

Cames van Batenburg, Kang, de Joannis de Verclos, and Pirot~\cite{vBKdJdVP} 
observed that the proof of Theorem~\ref{FSGT90-planar} 
%and the construction preceding it, 
can be further generalized.  For a graph class $\G$, let $\chi(\G)$ and 
$\omega(\G)$ denote the maximum of $\chi(G)$ and of $\omega(G)$ over all $G\in \G$, 
and let $\chi^s(\G,{\Delta})$ denote the maximum of $\chi^s(G)$ over all $G\in \G$ 
with maximum degree $\Delta$.  They showed that if $\G$ is minor-closed, then 
$\chi^s(\G)\le \chi(\G)(\Delta+1)$.
And if $G$ is also closed under adding pendant edges, then $\chi^s(G,\Delta) \ge 
\omega(\G)\Delta-\binom{\omega(G)}2$ when $\Delta > \omega(G)$.
The proof of the upper bound follows the proof of Theorem~\ref{FSGT90-planar}.
And the lower bound follows from $G_{\omega(\G),\Delta}$.
%\smallskip
\bigskip

\renewcommand{\arraystretch}{1.4}
\begin{figure}[!t]
\centering

\begin{tabular}{r|cccc}
girth $\ge$ & 3 & 3 & 7 & $10\Delta-4$ \\
\hline
$\Delta\ge$ & 1 & 7 & 1 & 4 \\
$\chi^s\le$ & $4\Delta+4$ & $4\Delta$ & $3\Delta$ & $2\Delta-1$
\end{tabular}
~~~~~~~~~~~~
\begin{tabular}{r|cccc}
$\mad<$ & 3 & 26/9 & 8/3  \\
\hline
$\Delta\ge$ & 7 & 7 & 9 \\
$\chi^s\le$ & $3\Delta$ & $3\Delta-1$ & $3\Delta-3$ 
\end{tabular}
\captionsetup{width=.9\linewidth}
\caption{Left: Triples $(g,D,f(\Delta))$, in columns, such that every planar graph $G$ with 
girth at least $g$ and $\Delta(G)\ge D$ 
satisfies $\chi^s(G)\le f(\Delta)$. 
Right: Triples $(k,D,f(\Delta))$ such that every graph $G$ with 
$\mad(G)<k$ and $\Delta(G)\ge D$ 
satisfies $\chi^s(G)\le f(\Delta)$. 
References for both tables appear throughout Section~\ref{sec-chis-planar-girth}.}
%graph with maximum degree $\Delta=3$ and girth at least $g_k$.  (The entry $(3,-)$
%denotes that no such pair $(3,g_3)$ exists. References are in the preceeding few
%paragraphs.)}
\end{figure}

Next we return to planar graphs with sufficiently large girth.  In a sense,
such graphs feel ``tree-like'', so perhaps a trivial lower bound may hold with
equality, as it does for trees.  Note that if a graph $G$ has adjacent vertices
of degree $\Delta$, then $\chi^s(G)\ge 2\Delta-1$.  So we seek sufficient
conditions on planar graphs to imply $\chi^s(G)\le2\Delta-1$.
Borodin and Ivanova~\cite{BorodinI13StrongEdge} showed that 
$\Delta\ge 3$ with $g\ge40\floor{\Delta/2}+1$ suffices.
Chang, Montassier, P\^{e}cher, and Raspaud~\cite{ChangMPR14} proved that 
also $\Delta\ge 4$ with $g\ge 10\Delta+46$ suffices, which is a
stronger result when $\Delta\ge 6$.  
Most recently, Wang and Zhao~\cite{WZ18} weakened the hypotheses further: $g\ge 10\Delta-4$;
their result is the best known when $\Delta\ge 4$.

\begin{theorem}[\cite{WZ18}]
If $G$ is planar with $\Delta\ge 4$ and $g\ge 10\Delta-4$, then $\chi^s(G)\le
2\DeltaG-1$.
\end{theorem}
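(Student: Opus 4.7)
The plan is to argue by contradiction via a minimum counterexample, combining a thread-reducibility argument with a discharging computation. Let $G$ be a counterexample minimizing $|V(G)|+|E(G)|$, so $G$ is planar, $\Delta(G)=\Delta\ge 4$, $g(G)\ge 10\Delta-4$, and $\chi^s(G)\ge 2\Delta$. By the minimality assumption, every proper subgraph $H\subsetneq G$ satisfies $\chi^s(H)\le 2\Delta(H)-1\le 2\Delta-1$, since $H$ inherits planarity and has $g(H)\ge g(G)\ge 10\Delta(H)-4$.

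First I would establish the structural properties of $G$. Easy reducibility gives $\delta(G)\ge 2$. The central reducible configuration is a long \emph{thread}: a maximal path $v_0v_1\cdots v_kv_{k+1}$ in $G$ with $d_G(v_i)=2$ for $1\le i\le k$ and $d_G(v_0),d_G(v_{k+1})\ge 3$. The goal is to show that $G$ contains no thread with $k$ at least some threshold $L=L(\Delta)$ of order $5\Delta$. The idea is that by deleting the interior $2$-vertices we obtain a proper subgraph, which by minimality admits a strong edge-coloring with $2\Delta-1$ colors; this coloring is then extended back to $G$ by coloring the thread edges one at a time. Each \emph{interior} edge $v_iv_{i+1}$ (for $2\le i\le k-2$) has at most four already-colored conflicting edges, so it extends trivially. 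The two \emph{boundary} edges $v_0v_1$ and $v_kv_{k+1}$, together with their adjacent thread-edges $v_1v_2$ and $v_{k-1}v_k$, can each see up to $(\Delta-1)^2+(\Delta-1)$ colored edges; handling them requires first un-coloring a small prescribed set of edges near the two endpoints of the thread and then applying a coordinated local recoloring together with a matching-type argument on the resulting lists of allowed colors to close the extension.

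Second, I would execute the discharging. Assign each vertex $v$ initial charge $\mu(v):=d(v)-4$ and each face $f$ initial charge $\mu(f):=|f|-4$; by Euler's formula the total charge is $-8$. Since $g\ge 10\Delta-4$, every face begins with charge at least $10\Delta-8$. Since every thread has length less than $L$, the boundary walk of every face contains a vertex of degree $\ge 3$ in every $L$ consecutive positions. I would define rules sending charge from each face to its incident $2$- and $3$-vertices (lifting them to non-negative charge) and verify that the girth lower bound is exactly strong enough that every face also ends non-negative. Because vertices of degree at least $4$ start with non-negative charge, every element of $G$ ends non-negative, contradicting the total $-8$.

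The main obstacle is the end-of-thread extension step. Near each high-degree endpoint of the thread, a single boundary edge can have on the order of $\Delta^2$ forbidden colors through its high-degree endpoint, far more than the $2\Delta-1$ colors available, so a naive greedy extension fails. Managing this by a coordinated local color exchange followed by a Hall-type matching on the remaining uncolored boundary edges is the delicate technical heart of the argument. Calibrating $L$ so that the face-discharging closes with exactly girth $10\Delta-4$ is what pins down the precise form of the hypothesis.
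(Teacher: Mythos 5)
First, a point of reference: this survey states the theorem with a citation to [WZ18] and gives no proof, so there is no in-paper argument to compare against; your proposal has to stand on its own and against the cited proof. Your skeleton --- minimal counterexample, reducibility of long threads (maximal paths of internal $2$-vertices), and a global counting/discharging step showing that a planar graph of girth at least $10\Delta-4$ with minimum degree $\ge 2$ must contain such a thread --- is indeed the standard architecture for all the results of this type (Borodin--Ivanova, Chang et al., Wang--Zhao), so that part is on target. (Two smaller quibbles: the induction should be phrased with $\Delta$ as a fixed parameter, i.e.\ ``every planar graph with maximum degree \emph{at most} $\Delta$ and girth at least $10\Delta-4$ satisfies $\chi^s\le 2\Delta-1$,'' since a proper subgraph may have maximum degree below $4$; and the charge assignment $\mu(v)=d(v)-4$, $\mu(f)=\ell(f)-4$ does not balance when a face's boundary is almost entirely $2$-vertices, so the counting step needs a different normalization or the suppressed-graph argument.)

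The genuine gap is the thread-reducibility step, which is the entire content of the theorem. You correctly identify the obstacle --- the boundary edge $v_0v_1$ at a degree-$\Delta$ endpoint conflicts with on the order of $\Delta^2$ already-colored edges while only $2\Delta-1$ colors exist --- but the remedy you propose (uncolor a few nearby edges, recolor locally, finish with a Hall-type matching) cannot work: the edges incident to $N[v_0]$ form a dense conflict structure whose chromatic demand is $\Theta(\Delta^2)$ no matter which small set you uncolor, so no local exchange plus matching argument closes the gap. The idea that actually overcomes this (and is the reason the cited paper is titled ``Odd graph and its applications \dots'') is to stop coloring edges greedily and instead encode the coloring near each branch vertex $v$ by the $(\Delta-1)$-set $S(v)\subseteq\{1,\dots,2\Delta-1\}$ of colors missing at $v$; in a strong $(2\Delta-1)$-edge-coloring adjacent branch vertices must receive disjoint sets, i.e.\ one needs a homomorphism-like map into the Kneser (odd) graph $K(2\Delta-1,\Delta-1)$, and the color of each edge is then forced as the unique element outside the union of the two endpoint sets. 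A thread of length roughly $2\Delta$ is reducible precisely because the odd graph admits a walk of any prescribed length above a threshold between any two prescribed vertices, so the coloring of the deleted subgraph can be reconnected across the thread regardless of what sets appear at its two ends; calibrating that walk length against the counting argument is what produces the girth bound $10\Delta-4$. Without this (or an equivalent) mechanism, your extension step fails and the proof does not go through.
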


Hud\'{a}k, Lu\v{z}ar, Sot\'{a}k, and S\v{k}rekovski~\cite{HudakLSS} continued the
study of $\chi^s$ for planar graphs in terms of both their girth $g$
and their maximum degree $\Delta$.  But now they removed the dependence of $g$ on
$\Delta$.  The following construction provides a lower bound.
Given integers $g$ and $\Delta$, form $H_{g,\Delta}$ from the cycle $C_g$ by
adding $\Delta-2$ pendant edges at each cycle vertex.  Now
$|E(H_{g,\Delta})|=g(\Delta-2+1)$.  When $g$ is odd, the maximum size of an
induced matching is $(g-1)/2$.  So $\chi^s(H_{g,\Delta})\ge \lceil
2g(\Delta-1)/(g-1)\rceil$.  They conjectured that this construction is extremal,
up to an additive constant.

\begin{conjecture}[\cite{HudakLSS}]
\label{HLSS-conj}
There exists a constant $C$ such that if $G$ is any planar graph with girth
$g\ge 5$ and maximum degree $\Delta$, then
$$\chi^s(G)\le \left\lceil\frac{2g(\Delta-1)}{g-1}\right\rceil+C.$$
\end{conjecture}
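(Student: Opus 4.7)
The plan is to combine a discharging argument for bounded $\Delta$ with a probabilistic Na\"ive Coloring Procedure for large $\Delta$, calibrating the constant $C$ to cover both regimes. By Lemma~\ref{lem:folklore}, any planar $G$ of girth $g$ has $\mad(G) < 2g/(g-2)$, which controls local density throughout.

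For the bounded-$\Delta$ regime I would argue by minimum counterexample: let $G$ be planar of girth $g\ge 5$ with $\chi^s(G) > k := \lceil 2g(\Delta-1)/(g-1)\rceil + C$, minimizing $|V(G)|+|E(G)|$. The reducible configurations would be local substructures whose deletion leaves a graph $k$-colorable and whose coloring extends back to $G$. Since the number of edges forbidden from an edge $e=uv$ when extending is at most $d(u)+d(v)-2 + \sum_{w\in (N(u)\cup N(v))\setminus\{u,v\}}(d(w)-1)$, the reducibility threshold is determined by the degrees of $u$, $v$, and their neighbors; in particular, paths of consecutive low-degree vertices along the cycle space, where a Kempe-style swap frees a color, should be reducible. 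A standard discharging scheme (initial charges $d(x)-\tfrac{2g}{g-2}$ on vertices, with matching face charges so Euler's formula forces a negative total) whose rules push charge from low-degree vertices and short faces toward high-degree hubs should force the appearance of at least one such configuration, contradicting minimality.

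For the large-$\Delta$ regime, the target coefficient $\tfrac{2g}{g-1}$ is too tight for raw discharging, so I would iterate the Na\"ive Coloring Procedure in the spirit of Theorems~\ref{HvdHMR-thm} and~\ref{thm:MR-strong}: color edges uniformly at random from a palette of size $k$, uncolor conflicts, and nibble. The crucial input is that in $L(G)^2$ the neighborhood of any vertex is $g$-sparse, since a dense cluster of distance-two edges around a fixed edge of $G$ would produce a cycle of length less than $g$. Talagrand's inequality plus the {\lovasz} Local Lemma convert this sparsity into one-step progress, and a bounded number of iterations leaves a list-coloring problem solvable greedily, yielding $\chi^s(G)\le \tfrac{2g}{g-1}(\Delta-1)(1+o(1))$ as $\Delta\to\infty$.

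The main obstacle, I expect, is matching the exact coefficient $\tfrac{2g}{g-1}$ uniformly in $g$, rather than a weaker $\tfrac{2g}{g-1}+\epsilon$. The probabilistic side naturally produces $(1+o(1))$ slack, so the constant $C$ must absorb this slack at the threshold $\Delta=\Delta_g^\star$ where the two methods meet, while the discharging rules must scale with $g$ to track the $\tfrac{2(\Delta-1)}{g-1}$ excess term forced by the extremal construction $H_{g,\Delta}$. Getting a single $C$ that works uniformly in $g$ as well as $\Delta$ — rather than a $g$-dependent $C_g$ — seems to be the crux of the conjecture, and a plausible fallback would be first to prove the weaker statement $\chi^s(G)\le \lceil 2g(\Delta-1)/(g-1)\rceil + C_g$, then argue that the $C_g$ obtained is uniformly bounded.
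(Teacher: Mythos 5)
The statement you are trying to prove is an \emph{open conjecture} of Hud\'{a}k, Lu\v{z}ar, Sot\'{a}k, and \v{S}krekovski; the paper offers no proof of it and records only partial progress (Chen, Deng, Yu, and Zhou prove the much weaker bound $\chi^s(G)\le 2\Delta+\lceil 12(\Delta-2)/g\rceil$, and only for girth $g\ge 26$ and graphs that are not subgraphs of $H_{g,\Delta}$). So there is no ``paper's own proof'' to compare against, and your proposal would need to stand entirely on its own --- which it does not.

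Two concrete gaps. First, the probabilistic half rests on a misapplication of the Molloy--Reed framework. The Na\"{\i}ve Coloring Procedure with Talagrand and the Local Lemma converts sparsity of the neighborhoods in $L(G)^2$ into a \emph{constant-factor} saving over the maximum degree of $L(G)^2$, which is $\Theta(\Delta^2)$ even for planar graphs of large girth (an edge $uv$ can have $2(\Delta-1)^2$ edges at distance two). It cannot by itself produce a bound that is linear in $\Delta$, let alone one with leading coefficient $\tfrac{2g}{g-1}$; the known linear bounds for planar graphs ($4\Delta+4$ via Vizing plus the Four Color Theorem, $3\Delta$ for girth $7$ via Gr\"otzsch, $2\Delta-1$ for girth $\ge 10\Delta-4$) all come from structural or discharging arguments, not from neighborhood-sparsity nibbling. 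Your claimed ``$g$-sparseness'' of neighborhoods is also not established and, even if true, is the wrong kind of input. Second, the discharging half names no actual reducible configuration and proves nothing reducible; asserting that ``a standard discharging scheme should force the appearance of at least one such configuration'' is precisely the part that nobody has been able to do --- the best discharging results to date stall at leading coefficient $3$ for girth $6$ and need girth growing linearly in $\Delta$ to reach coefficient $2$. Your closing remark that one might first prove the bound with a girth-dependent constant $C_g$ is a reasonable research direction, but it is a restatement of the difficulty, not a step toward resolving it.
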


The best progress on Conjecture~\ref{HLSS-conj} is due to Chen, Deng, Yu, and
Zhou~\cite{CDYZ}, who proved the following.
Let $G$ be connected and planar with girth $g$, where $g\ge 26$.
If $G$ is not a subgraph of $H_{g,\Delta}$ (in the previous paragraph), then
$\chi^s(G)\le 2\Delta+\lceil 12(\Delta-2)/g\rceil$.

\section{Odds and Ends}
In this section we briefly touch on a few more related problems. 
%\subsection{Total Coloring}
\subsection{Total Coloring and
List Coloring Squares} %, and List Coloring Total Graphs}
\label{sec:total}
A \Emph{total coloring} of a graph $G$ colors its edges and vertices so that
every two adjacent or incident elements get distinct colors.  The \emph{total chromatic
number} $\chi''(G)$\aaside{$\chi''$}{3mm}, is the minimum number of colors needed for a total coloring.
The \EmphE{total graph}{3mm} $T(G)$ of a graph $G$ has as its vertices the edges and
vertices of $G$; two vertices of $T(G)$ are adjacent if their corresponding
elements are incident or adjacent in $G$, so $\chi''(G)=\chi(T(G))$. 
Equivalently, $T(G)$ is the square of its edge-vertex incidence
graph; this incidence graph is formed from $G$ by subdividing each
edge. Thus, total coloring is a very special case of
coloring squares.  Undoubtedly, the biggest conjecture in this area is
the following ``Total Coloring Conjecture'' that was posed independently by Bezhad~\cite{Behzad} and
Vizing~\cite{Vizing64, Vizing65b}.

\begin{conjecture}[\cite{Behzad,Vizing64,Vizing65b}]
Every graph $G$ satisfies $\chi''(G)\le\Delta+2$.
\label{conj:total}
\end{conjecture}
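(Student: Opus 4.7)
The conjecture is one of the oldest open problems in graph coloring, so my goal is to sketch the most promising route rather than to claim a resolution. The first move is to translate total coloring to vertex coloring of the total graph $T(G)$, where $\chi''(G)=\chi(T(G))$, so that all the techniques discussed throughout this survey apply directly. Elementary computations give $\omega(T(G))\le\Delta+1$ (realised by a maximum-degree vertex together with its incident edges) and $\Delta(T(G))=2\Delta$. Hence the trivial greedy bound is $\chi''(G)\le 2\Delta+1$; Brooks' theorem trims only one color, and the challenge is that the target $\Delta+2$ is nearly a factor of $2$ below what any local-density argument on $T(G)$ can deliver.

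My plan follows the two-pronged approach that has worked for similar conjectures in this survey. For small $\Delta$ I would try a reducible-configurations-plus-discharging argument directly on $T(G)$, in the spirit of Kostochka's resolution of $\Delta\le 5$ and the discharging methodology used throughout Section~\ref{sec:planar-girth}; the aim is to identify a rich enough family of reducible subgraphs to push through further fixed values of $\Delta$ one at a time, ideally finding an infinite family parametrised by $\Delta$. For large $\Delta$ I would follow the Molloy--Reed framework outlined in the sketch of Theorem~\ref{thm:MR-strong}: apply the Na\"ive Coloring Procedure to $T(G)$ with a palette of size $\Delta+C$; exploit the local sparsity of neighborhoods in $T(G)$ (for instance, any two edges sharing a common endpoint are adjacent in $T(G)$, so the neighborhood of any vertex contains many edges and hence many non-adjacent pairs) to show that many colors are repeated in each neighborhood; use Talagrand's inequality for concentration and the \lovasz\ Local Lemma to combine; and iterate via the R\"odl nibble until each uncolored vertex has fewer uncolored neighbors than available colors, so that greedy finishing succeeds. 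This should yield $\chi''(G)\le\Delta+C$ for some absolute constant $C$, essentially matching the known asymptotic bound of Molloy and Reed.

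The main obstacle is the one that has blocked the conjecture for sixty years: reducing $C$ down to $2$. Every probabilistic argument leaves additive slack that shrinks only when the final nibble rounds are performed with extreme precision, and no current technique closes this gap. A successful plan would need either (i) a dramatic sharpening of the semi-random method that loses only $O(1)$, followed by a discharging-style finish to squeeze the constant down to $2$, or (ii) a structural polynomial argument, perhaps an Alon--Tarsi orientation of $T(G)$ witnessing $\chiAT(T(G))\le\Delta+2$ for $\Delta$ large, which would additionally upgrade the bound to list coloring. Both routes appear considerably beyond current tools, which is precisely why Conjecture~\ref{conj:total} remains the central open problem of total coloring.
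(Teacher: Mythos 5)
This statement is the Total Coloring Conjecture, which the survey presents as an open problem and does not prove, so there is no proof in the paper to compare against; you correctly refrain from claiming a resolution. Your sketch of the known partial progress --- the reduction to $\chi(T(G))$, the trivial $2\Delta$ bound via Brooks, the case analysis for small $\Delta$, and the Molloy--Reed route through the na\"{\i}ve coloring procedure and the nibble yielding $\chi''(G)\le\Delta+C$ --- accurately reflects the state of the art reported in Section~\ref{sec:total}, and your assessment that no current technique reduces the additive constant to $2$ is exactly why the conjecture remains open.
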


It is easy to check that the conjecture holds for every clique, and that it holds
with equality for every even clique, as follows.  
For the upper bound, let $G:=K_{2t+1}$.  By Vizing's Theorem, $\chi'(G)\le
\Delta+1=2t+1$.  Counting edges implies that each of the $2t+1$ color classes
must be a matching of size $t$; further each vertex has an incident edge in all
but one matching.  Thus, we can extend the edge-coloring to a total coloring,
with no new colors.  So $\chi''(K_{2t+1})=2t+1=\Delta+1$.  Hence,
$\chi''(K_{2t})\le 2t+1=\Delta+2$.
Now, we give the matching lower bound for $K_{2t}$.  We must color $\binom{2t}
{2}+2t=2t^2+t$ elements.  Since each color appears on at most $t$ elements, we
need at least $(2t^2+t)/t = 2t+1=\Delta+2$ colors.  Hence, $\chi''(K_{2t})=\Delta+2$.

By applying Brooks' Theorem to the total graph, we can see that $\chi''(G)\le
2\Delta$.  One approach to improving this bound is to color the vertices first,
then the edges.  The vertices need at most $\Delta+1$ colors, and each edge
loses colors to only its two endpoints.  So this idea shows that $\chi''(G)\le
2+\chil'(G)$.  
Unfortunately, it is non-trivial to bound $\chil'(G)$ below
$2\Delta-2$  (although this can be done~\cite{Kahn,Kahn2}).

The first bound of the form $\chi''(G)\le \Delta+o(\Delta)$ is due to
Hind~\cite{Hind-total}, who showed that $\chi''(G)\le \Delta+2\sqrt{\Delta}$.
Chetwynd and H\"{a}ggkvist~\cite{ChetwyndH}
improved this to $\chi''(G)\le\Delta+18\Delta^{1/3}\log(3\Delta)$.
And for $\Delta$ sufficiently large, Hind, Molloy, and Reed~\cite{HindMR}  
strengthened the bound to $\chi''(G)\le\Delta+\textrm{poly}(\log\Delta)$.
%Brualdi~\cite[p.~437]{brualdi-ref} and 
Alon~\cite{Alon93} asked whether there is
some constant $C$ such that $\chi''(G)\le\Delta+C$ for every graph $G$.
The best current bound, due to Molloy and
Reed~\cite{MolloyR-total} and \cite[Chapters 17--18]{MolloyR-GCPM},
answers their question affirmatively.

\begin{theorem}[\cite{MolloyR-total}]
There exists $\Delta_0$ such that if $G$ has $\Delta\ge \Delta_0$,
 then $\chi''(G)\le \Delta+10^{26}$.
\label{thm:MolloyR-total}
\end{theorem}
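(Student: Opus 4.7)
The plan is to view the problem as vertex-coloring the total graph $T(G)$, which has $\Delta(T(G)) \le 2\Delta$, and to apply an iterated Na\"{i}ve Coloring Procedure in the spirit of the proof sketch of Theorem~\ref{thm:MR-strong}. Applying Brooks's Theorem directly to $T(G)$ yields only $\chi(T(G)) \le 2\Delta$, so the target $\chi(T(G)) \le \Delta + C$ (eventually with $C = 10^{26}$) must exploit more than the maximum degree---specifically, the local \emph{sparsity} of $T(G)$. The key structural input is that for every $x \in V(T(G))$ the induced neighborhood $T(G)[N(x)]$ is far from being a clique: if $x$ is an edge $uv$ of $G$, then neighbors of $x$ that are incident to $u$ interact in $T(G)$ with neighbors of $x$ incident to $v$ only via rare triangles in $G$, so a constant fraction of the $\binom{2\Delta}{2}$ pairs inside $N(x)$ are non-edges of $T(G)$ (and similarly when $x$ is a vertex of $G$). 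This is exactly the kind of sparsity input that drives the proof of Theorem~\ref{thm:MR-strong}.

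The algorithm first reserves a palette of $C$ ``emergency'' colors and attempts to color the rest of $T(G)$ from the other $\Delta$ colors via a R\"{o}dl nibble: at each iteration $i$, each still-uncolored vertex $x$ tentatively picks a color uniformly from its current list $L_i(x) \subseteq \{1,\ldots,\Delta\}$, keeps the pick if and only if no neighbor chose the same color, and otherwise is uncolored; successful picks are then removed from the lists of all uncolored neighbors. Let $d_i(x)$ denote the number of uncolored neighbors of $x$ at iteration $i$. The invariant to propagate throughout the nibble is that for every still-uncolored $x$ the ratio $|L_i(x)|/d_i(x)$ grows by a definite multiplicative factor each round. The sparsity observation above is precisely what enables this gain: whenever two non-adjacent neighbors of $x$ happen to pick the same color, both may succeed, so two uncolored neighbors are removed from $N(x)$ at the cost of losing only one color from $L_i(x)$. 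Concentration of $|L_i(x)|$ and $d_i(x)$ is obtained via Talagrand's inequality with appropriately Lipschitz certificate functions, and the {\lovasz} Local Lemma makes the resulting invariant hold simultaneously at every vertex.

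After $O(\log\log\Delta)$ nibble iterations, every still-uncolored $x$ has strictly more available colors (counting the $C$ reserved ones) than uncolored neighbors, and a greedy pass finishes the coloring. The hardest part will be maintaining tight two-sided control of $|L_i(x)|$ and $d_i(x)$ simultaneously at every $x$ as the iterations compound: without intervention, some palettes grow or shrink atypically, and the LLL invariants break. The standard Molloy--Reed remedies are (i) an \emph{equalizing} coin-flip step that artificially discards colors from palettes that grow atypically large, so that the LLL-friendly invariants stay uniform across $V(T(G))$, and (ii) careful bookkeeping of the ``wasted colors'' arising from double collisions among non-adjacent neighbors of $x$, which converts the expected one-step gain into an almost-sure gain via Talagrand. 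Propagating these invariants cleanly through the entire nibble is the technical core of~\cite{MolloyR-total} and Chapters~17--18 of~\cite{MolloyR-GCPM}; all of the slack lost along the way is absorbed into the final constant $10^{26}$.
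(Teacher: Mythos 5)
This theorem is cited from Molloy and Reed \cite{MolloyR-total}; the survey gives no proof of it, only the remark (at the end of Section~\ref{erdos-nesetril-sec} and in Section~\ref{sec:total}) that a variant of the Na\"{i}ve Coloring Procedure is central and that the natural route is to color the vertices of $G$ first and then the edges, so that $\chi''(G)\le 2+\chil'(G)$. Measured against the actual argument of \cite{MolloyR-total} (and \cite[Chapters 17--18]{MolloyR-GCPM}), your proposal has a genuine gap: its engine cannot produce a bound of the form $\Delta+C$. You treat $T(G)$ as a generic locally sparse graph of maximum degree $2\Delta$ and run an iterated nibble, hoping the list-to-uncolored-degree ratio improves each round until greedy finishing is possible. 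But the neighborhood of an edge-vertex $uv$ in $T(G)$ is, up to a sparse set of cross-edges, the union of \emph{two} cliques of size about $\Delta$ (the edges at $u$ together with $u$, and the edges at $v$ together with $v$). In such a neighborhood each color retained by the procedure can be retained by at most one vertex per clique, i.e., at most two neighbors per color, so even in the idealized best case each round removes colors from $L_i(x)$ at exactly half the rate at which it removes uncolored neighbors: the ratio $|L_i(x)|/d_i(x)$ starts at about $1/2$ and is at best \emph{sustained} there, never driven above $1/2$, and certainly never above $1$. Local-sparsity nibbles of this kind yield constant-factor savings off $2\Delta$ (exactly as in Theorem~\ref{thm:MR-strong}, where the output is $1.998\Delta^2$ rather than $\frac54\Delta^2$); they cannot reach $\omega(T(G))+O(1)=\Delta+O(1)$. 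Your $O(\log\log\Delta)$-round greedy endgame therefore never becomes available.

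The actual proof is structurally different and asymmetric in the two vertex classes of $T(G)$: one first fixes a proper vertex coloring of $G$ (chosen with foresight so that the induced constraints on edges are benign), and then the problem becomes a list edge-coloring problem in which each edge $uv$ has a list of all colors except $c(u)$, $c(v)$, and the colors on already-colored incident edges. Since a total coloring with $\Delta+C$ colors forces almost every color to appear exactly once at each vertex (on an incident edge or on the vertex itself), the partial colorings one must track are near-perfect matchings in each color class, and the semi-random iteration has to be run with Kahn's list-edge-coloring machinery \cite{Kahn,Kahn2} (hard-core-distribution/near-perfect-matching bookkeeping) rather than with raw neighborhood-sparsity accounting; the reserved colors are then spent in a structural completion phase, not a plain greedy pass. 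If you want to pursue this statement, start from the two-phase reduction $\chi''(G)\le 2+\chil'(G)$ described in Section~\ref{sec:total} and ask what extra property of the initial vertex coloring lets the edge phase terminate with only $O(1)$ extra colors; that question, not the local sparsity of $T(G)$, is where the content of \cite{MolloyR-total} lies.
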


In the paper, the authors note that by being more careful, they can show that
$C=500$ suffices and probably even $C=100$ (but most likely not $C=10$).
It seems that no analogue of Theorem~\ref{thm:MolloyR-total} is known for
list coloring.  However, Kahn's results mentioned above for list coloring line
graphs~\cite{Kahn,Kahn2} do imply that $\chil''(G)\le\Delta+o(\Delta)$.

Thus far in this survey, we have avoided fractional coloring.  Here we make a
brief exception.  For an introduction to this topic,
see~\cite{ScheinermanU-book}.  To denote the fractional total chromatic number,
we write $\chif''(G)$.  Kilakos and Reed~\cite{KilakosR} proved the relaxation of
Conjecture~\ref{conj:total} for fractional coloring.  Specifically, they showed
that $\chif''(G)\le \Delta+2$ for every graph $G$ (see
also~\cite[Section 4.6]{ScheinermanU-book}).  Reed conjectured that for
every $\epsilon>0$ and every $\Delta$ there exists some girth $g$ such that if
$G$ is a graph with girth at least $g$ and maximum degree $\Delta$, then
$\chif''(G)\le \Delta+1+\epsilon$.  For $\Delta=3$ and for $\Delta$ even, Kaiser,
King, and \kral~\cite{KaiserKK} proved the conjecture in a stronger sense.
For each such $\Delta$, they showed there exists $g$ such that if $G$ has
maximum degree $\Delta$ and girth at least $g$, then $\chif''(G)=\Delta+1$.
Kardo\v{s}, \kral, and Sereni~\cite{KardosKS} verified Reed's conjecture for
the remaining case, odd $\Delta$ (although not in the stronger sense mentioned above).

Now we consider some graph classes for which Conjecture~\ref{conj:total} is proved.
For bipartite graphs, the result holds trivially, since $\chi''(G)\le
\chi(G)+\chi'(G)=2+\Delta$.  In fact, $\chip''(G)\le \Delta+2$, but the proof is
harder.  Galvin~\cite{galvin} showed that $\chil'(G)=\Delta$ for every bipartite
graph, and Schauz~\cite{Schauz09} strengthened this to $\chip'(G)=\Delta$.  By
coloring the vertices first, we get $\chil''(G)\le 2+\chil'(G)=\Delta+2$; a
similar argument shows that also $\chip''(G)\le\Delta+2$.
So now suppose $G$ is non-bipartite.
For $\Delta=2$, the problem reduces to computing $\chi^2$ for odd cycles, so
$\chiAT''\le \Delta+2=4$, as noted at the end of Section~\ref{sec:planar-girth}.
For $\Delta=3$, the conjecture was proved by
Rosenfeld~\cite{Rosenfeld} and by Vijayaditya~\cite{Vijayaditya}.  Kostochka proved
it for $\Delta=4$~\cite{Kostochka-total4} and also for
$\Delta=5$, in his dissertation; see~\cite{Kostochka-total}.  
Next, we consider~planar~graphs.

\begin{theorem}[\cite{Andersen93}]
Every planar graph $G$ with $\Delta\ge 7$ satisfies $\chi''(G)\le \Delta+2$.
\label{planar-total}
\end{theorem}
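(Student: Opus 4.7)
The plan is to attack this by the standard minimum-counterexample-plus-discharging scheme, augmented by Vizing's planar edge-coloring theorem for $\Delta\ge 7$. Let $G$ be a counterexample minimizing $|V(G)|+|E(G)|$: so $G$ is planar, $\Delta(G)\ge 7$, $\chi''(G)>\Delta+2$, but every proper subgraph $H$ of $G$ with $\Delta(H)\le\Delta(G)$ satisfies $\chi''(H)\le\Delta+2$. Standard arguments (deleting bridges and cut-vertices, identifying pendant vertices) show $G$ is $2$-connected, so we may fix a planar embedding in which every face is bounded by a cycle.

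The first substantive step is to assemble a list of \emph{reducible configurations}: local substructures $H\subseteq G$ such that a total $(\Delta+2)$-coloring of $G-H$ (or $G$ with $H$'s edges suitably modified) always extends to one of $G$. Typical examples: an edge $uv$ with $d(u)+d(v)$ small, a vertex of degree $2$ with a low-degree neighbor, and short paths of $3$-vertices incident to a common high-degree vertex. The extension step combines greedy coloring on vertices of slack (those with $d(v)+\text{(colors on incident edges already used)}\le\Delta+1$) with Kempe-type recolorings, very much in the style of Vizing's adjacency lemma for edge coloring, but with both edges and vertices as uncolored elements. The hypothesis $\Delta\ge 7$ will be used here because it is Vizing's bound ensuring $\chi'(G)=\Delta$ for planar $G$ (cited as \cite{SandersZhao-Delta7,Vizing65} earlier in the survey), so on the $H$-free part we may assume an edge-coloring with only $\Delta$ colors, leaving two ``spare'' colors at each vertex for the total-coloring extension.

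The second step is a discharging argument on the planar embedding. Assign initial charges
\[
\mu(v)=d(v)-4\quad(v\in V(G)),\qquad \mu(f)=\ell(f)-4\quad(f\in F(G)),
\]
so that Euler's formula gives $\sum_v\mu(v)+\sum_f\mu(f)=-8<0$. Now transfer charge from vertices of degree $\ge 5$ and faces of length $\ge 5$ (the positive-charge sources) to $2$- and $3$-vertices and to $3$-faces (the negative-charge sinks), using rules calibrated so that the absence of each reducible configuration forces the final charge $\mu^*$ to be nonnegative on every vertex and face. That contradicts the total $-8$, proving $G$ cannot exist.

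The main obstacle I expect is the reducibility step, not the discharging. Total-coloring extensions are notoriously delicate because removing an edge or vertex leaves both vertex-slots and edge-slots to re-fill simultaneously, and the Kempe chains used to free up a needed color can collide with the vertex color we just placed. In practice one has to classify neighborhoods of $3$- and $4$-vertices around an edge of the configuration with great care, tracking exactly which of the $\Delta+2$ colors can appear on which element; the argument gets particularly tight for $\Delta=7$, which is precisely the threshold where Vizing's $\chi'(G)=\Delta$ first holds for planar graphs and where only two colors are in reserve for vertices. Calibrating the discharging rules to exactly match the list of configurations one can actually reduce is then routine but tedious, and is where most of the case analysis of the paper would live.
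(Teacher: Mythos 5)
Your proposal is a template for a discharging proof, but none of the mathematical content that would constitute the proof is actually supplied: you never specify the list of reducible configurations, never prove any of them reducible, and never state the discharging rules or verify that the final charges are nonnegative. You yourself flag the reducibility step as the ``main obstacle,'' and that obstacle is precisely where the entire proof would live; as written there is nothing to check. It is also not clear a priori that such a scheme closes at the threshold $\Delta\ge 7$ --- the published discharging proofs in this area (e.g., Borodin's, and its successors) are long and typically target the stronger bound $\Delta+1$ only for considerably larger $\Delta$ --- so the plan cannot be accepted on faith. This is a genuine gap, not a stylistic difference.

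It is worth knowing that the paper's actual proof (the idea is attributed to Yap, with details by Andersen) requires no minimum counterexample and no discharging. One properly colors $V(G)$ with $\{1,2,3,4\}$ by the Four Color Theorem, and properly edge-colors $G$ with the $\Delta$ colors $\{3,4,\ldots,\Delta+2\}$ using the class-1 theorem for planar graphs with $\Delta\ge 7$ --- the only place that hypothesis enters, as you correctly anticipated. The only vertex--edge conflicts involve edges colored $3$ or $4$; uncoloring these leaves a disjoint union of paths and even cycles (two color classes of a proper edge-coloring), and each uncolored edge still has two colors of $\{1,2,3,4\}$ available, namely the two not used on its endpoints. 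Since paths and even cycles are $2$-choosable, the coloring completes. The overlap-two-palettes-and-repair trick is the missing idea that turns the theorem into a short consequence of known results rather than a new discharging campaign.
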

\begin{proof}
By the Four Color Theorem, we properly color the vertices with colors
$\{1,2,3,4\}$.  Since $G$ is planar and $\Delta\ge 7$, we can properly color
the edges with $\Delta$ colors~\cite{SandersZhao-Delta7,Zhang-Delta7}; we use
colors $\{3,4,\ldots,\Delta+1,\Delta+2\}$.  The only possible conflicts involve
edges colored with 3 or 4, so we uncolor all such edges.  
Now each uncolored edge has available two of the colors $\{1,2,3,4\}$, since
only two of these are used on its endpoints.  
Note that this uncolored subset of edges induces a vertex disjoint union of
paths and even cycles (since the edges were properly colored with 3 and 4).  
Thus, we can color the edges from their lists of available colors, since 
paths and even cycles have list chromatic number 2.
\end{proof}

\noindent
The idea of this proof is due to Yap~\cite[p.~88]{JensenToft}, but the first
presentation with all the details seems to be due to Andersen~\cite{Andersen93}.

For planar graphs with $\Delta$ sufficiently large, the bound in
Theorem~\ref{planar-total} can be improved
to $\chi''(G)= \Delta+1$.  Borodin~\cite{Borodin89} showed that $\Delta\ge 14$
suffices.  This hypothesis was successively weakened to $\Delta\ge 11$ by
Borodin, Kostochka, and Woodall~\cite{BorodinKW97b}; to $\Delta\ge 10$ by
Wang~\cite{Wang07}; and to $\Delta\ge 9$ by Kowalik, 
Sereni, and \v{S}krekovski~\cite{KowalikSS08}.

When $\Delta\le 8$, it is natural to look for additional hypotheses that imply
$\chi''(G)=\Delta+1$.  Chen and Wu~\cite{ChenW93}
showed that $\chi''(G)=\Delta+1$ for planar
graphs with  $\Delta\ge D$ and $g\ge k$ whenever
$(D,k)\in\{(8,4),(6,5),(4,8)\}$.
Borodin, Kostochka, and Woodall~\cite{BorodinKW98} strengthened these results
by weakening the hypotheses to 
$(D,k)\in\{(7,4),(5,5),(4,6),(3,10)\}$.
For planar graphs with $\Delta\in\{7,8\}$, the bound $\chi''(G)=\Delta+1$ has
also been proved when various types of cycles are forbidden; see, for example, \cite{
total-partial4, 
total-partial6,
total-partial3, 
total-partial2, 
total-partial5,
total-partial1}.  
For the case $\Delta=6$, the best result
is $\chi''(G)\le 9$.  This was originally proved by Borodin~\cite{Borodin89}, 
but also follows immediately from the fact that $\chi''(G)\le \Delta+2$ when
$\Delta=7$.

%\subsection{List Coloring Squares and Total Graphs}
\label{sec:LCS}
The most famous conjecture in list coloring states that every line graph
$G$ has $\chil(G)=\chi(G)$. 
In 2001, Kostochka and Woodall~\cite{KostochkaW} posed
an analogous conjecture for squares.

\begin{SLCC}[\cite{KostochkaW}]
For every graph $G$, we have $\chil^2(G)=\chi^2(G)$.
\end{SLCC}

Every graph $G$ clearly satisfies $\chil^2(G)\ge\chi^2(G)\ge \Delta+1$. 
However, proving a better lower bound on $\chi^2(G)$ or even $\chil^2(G)$ is
typically quite
difficult.  Thus, the majority of graphs $G$ known to satisfy the Square List
Coloring Conjecture are those for which $\chil^2(G)=\chi^2(G)=\Delta+1$.
This conjecture was proved for many classes of graphs~\cite{BLP1, BLP2,
BIN07, CranstonS, DvorakST08}, but in general it is false, as shown in 2013 by
Kim and Park~\cite{KimP-SLCC}.

\begin{theorem}[\cite{KimP-SLCC}]
\label{KimP-disproof}
The Square List Coloring Conjecture is false. More specifically, for each
integer $k$, there exists a graph $G_k$ such that $\chil(G^2_k)-\chi(G^2_k)>k$.  
\end{theorem}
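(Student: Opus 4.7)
The plan is to construct $G_k$ explicitly, leveraging the classical \erdos--Rubin--Taylor fact that $\chil(K_{n,n}) \to \infty$ as $n \to \infty$ while $\chi(K_{n,n}) = 2$. First, I would pick $n$ large enough that $\chil(K_{n,n}) > k + C$, where $C$ is a constant (to be determined from the construction) that will bound $\chi(G_k^2)$ from above; the inequality $\chil(G_k^2) - \chi(G_k^2) > k$ will then follow once $G_k$ is engineered so that a copy of $K_{n,n}$ sits conveniently inside its square.

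The structural target is to produce two sets $A$ and $B$, each of size $n$, so that in $G_k^2$: every $a \in A$ and every $b \in B$ are adjacent (i.e., $A \cup B$ induces $K_{n,n}$), while no two vertices within $A$ and no two within $B$ are adjacent. In $G_k$ this requires $d_{G_k}(a,b) = 2$ for all $a \in A, b \in B$, yet $d_{G_k}(a,a') \geq 3$ and $d_{G_k}(b,b') \geq 3$ for distinct vertices within the same part. Given such a $G_k$, the list-chromatic lower bound $\chil(G_k^2) \geq \chil(K_{n,n})$ follows from monotonicity of $\chil$ under induced subgraphs: starting from a bad list assignment $L$ on $K_{n,n}$ of size $\chil(K_{n,n}) - 1$, I would extend $L$ to all of $G_k^2$ by allotting every vertex outside $A \cup B$ a list of the same size drawn from a fresh palette disjoint from $\bigcup L$, so that any proper $L$-coloring of $G_k^2$ would restrict to one on the induced $K_{n,n}$, which cannot exist.

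Next I would display an explicit proper coloring of $G_k^2$ using at most $C$ colors (two for the bipartition of $A \cup B$, plus a bounded number for the auxiliary vertices), yielding the desired gap. The main obstacle is the construction of $G_k$ itself: the most obvious attempt---subdividing every edge of $K_{n,n}$ once---introduces middle vertices $w_{ab}$ that form an induced rook's graph $K_n \square K_n$ inside $G_k^2$, driving $\chi(G_k^2)$ up to $\Omega(n)$ and swamping the logarithmic gap $\chil(K_{n,n}) - 2 = O(\log n)$. I would therefore replace the na\"ive subdivision by a sharing gadget in which a single auxiliary vertex (or a small collection of them) simultaneously mediates many $(a,b)$ pairs, designed so that both the distance pattern above is enforced and the chromatic number and list chromatic number of the auxiliary subgraph of $G_k^2$ remain bounded by a constant independent of $n$. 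Engineering this compact gadget is the technical heart of the argument; once such a $G_k$ is in hand, verifying the coloring bounds and running the monotonicity argument are routine.
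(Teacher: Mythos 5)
Your construction cannot be completed: the ``compact sharing gadget'' you defer to the end does not exist, and the obstacle is structural rather than technical. Suppose $G_k$ realizes your distance pattern, i.e., $A\cup B$ induces $K_{n,n}$ in $G_k^2$ with $A$ and $B$ independent in $G_k^2$. Any vertex $x$ of $G_k$ with two neighbors $a,a'\in A$ would put $a$ and $a'$ at distance $2$, contradicting the independence of $A$ in $G_k^2$; so every vertex has at most one neighbor in $A$ and, symmetrically, at most one in $B$. Hence each pair $(a,b)$ with $d_{G_k}(a,b)=2$ requires its own private mediating vertex $x_{ab}$, and no amount of cleverness lets one auxiliary vertex serve many pairs. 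Worse, for a fixed $a\in A$ the $n$ mediators $\{x_{ab}: b\in B\}$ are pairwise at distance at most $2$ through $a$, so they form a clique of order about $n$ in $G_k^2$. Thus $\chi(G_k^2)\ge n-1$ for \emph{every} graph meeting your specification --- there is no constant $C$ --- while the lower bound you extract from the induced $K_{n,n}$ is only $\chil(K_{n,n})=(1+o(1))\log_2 n$. Since $\chil(G_k^2)\ge\chi(G_k^2)$ trivially, the induced-$K_{n,n}$ argument then yields no information at all about the difference $\chil(G_k^2)-\chi(G_k^2)$. (Your monotonicity/extension argument for pushing a bad list assignment from an induced subgraph to the whole graph is fine; it is the source of the gap that is missing.)

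The actual proof of Kim and Park abandons the idea of keeping $\chi(G_k^2)$ bounded and instead arranges for $G_k^2$ to be a complete multipartite graph with many parts of equal size: using $k-1$ mutually orthogonal latin squares (available when $k$ is prime) they build $H_k$ with $H_k^2=K_{k*(2k-1)}$, so that $\chi(H_k^2)=2k-1$, while a known lower bound for complete multipartite graphs gives $\chil(K_{k*(2k-1)})>3(k-1)$. The gap $3(k-1)-(2k-1)=k-2$ grows linearly even though both quantities are unbounded. If you want to salvage your outline, the lesson is that the target subgraph of the square should be one where the \emph{gap} $\chil-\chi$ is large relative to the cliques that the mediating vertices inevitably create --- complete multipartite graphs with parts of size $k\ge 2$ do this, whereas $K_{n,n}$ does not.
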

\begin{proof}[Proof Sketch.]
A \emph{latin square} is an arrangement of $k$ copies of each of the symbols
$1,\ldots,k$ in cells of a $k\times k$ grid, so that each symbol appears
exactly once in each row and
each column.  When we overlay one latin square with another, we form in each
cell of the grid an ordered pair, $(x,y)$, with $x$ coming from the first
square and $y$ from the second.  Two latin squares are \emph{mutually orthogonal} 
if each of the possible ordered pairs appears in exactly one cell of the grid.
It is well known that for every prime $k$, there exists a family of $k-1$
pairwise mutually orthogonal latin squares.

Kim and Park used these mutually orthogonal latin squares to construct a graph
$H_k$ such that $H_k^2=K_{k*(2k-1)}$, the complete multipartite graph with
$2k-1$ parts, each of size $k$.  Clearly, this gives $\chi(H_k^2)=2k-1$.
Further, it is known~\cite{vetrik} that
$\chil(K_{k*r})>(k-1)\floor{\frac{2r-1}k}$.  In particular, this gives
$\chil(K_{k*(2k-1)})>3(k-1)$.  Now taking $G_k=H_{k+2}$ gives
$\chil(G_k^2)-\chi(G_k^2)> k$, as desired.
\end{proof}
Later Kim and Park~\cite{KimP2} showed that the graphs $G_k$ in
Theorem~\ref{KimP-disproof} can also be required to be bipartite.
In the direction of the Square List Coloring Conjecture, Zhu asked whether there
exists a constant $K$ such that $\chil^k(G)=\chi^k(G)$ for all $k\ge K$ and all
graphs $G$.  This question was answered negatively by Kosar, Petrickova,
Reiniger, and Yeager~\cite{KPRY} and also by Kim, Kwon, and Park~\cite{KKP}.
The authors of~\cite{HavetHMR} attempted to partially salvage the Square List
Coloring Conjecture, proposing that it holds for all planar graphs.  However,
this too was disproved.  See Conjecture~\ref{HvdHMR-conj} and
Theorem~\ref{HvdHMR-c/e}.

Recall from above %Section~\ref{sec:total} 
that $\chi''(G)=\chi(T(G))$, 
where the \emph{total graph} $T(G)$ of a graph $G$ has as its vertices the edges and
vertices of $G$; two vertices of $T(G)$ are adjacent if their corresponding
elements are incident or adjacent in $G$.  Equivalently, the total graph of $G$
is formed from $G$ by subdividing each edge of $G$, then taking the square. 
The disproof of the full Square List Coloring Conjecture
(Theorem~\ref{KimP-disproof} above)
has increased interest in the following special case, which was posed
earlier by Borodin, Kostochka, and Woodall~\cite{BorodinKW97}.

\begin{TLCC}[\cite{BorodinKW97}]
For every graph $G$, if $T(G)$ is the total graph of $G$, then
$\chil(T(G))=\chi(T(G))$.
\end{TLCC}

In~\cite{BorodinKW97} the authors proved the conjecture for every
simple planar graph with $\Delta\ge 12$.  The same paper included a
clever averaging argument, which proved the conjecture whenever $\mad(G)\le
\sqrt{2\Delta}$.  The proof of the latter result was significantly simplified by
Woodall~\cite{woodall-ave-deg}; see also~\cite[Section 4]{CranstonW-guide}.
A \emph{multicircuit} is a multigraph for which the underlying simple graph is a
cycle.  Kostochka and Woodall proved the Total List Coloring Conjecture for all
multicircuits~\cite{multicircuits1,multicircuits2}; however, in general, it
remains open.

\bigskip
\subsection{L(2,1)-Labeling}
\label{sec:L21}

An \emph{$L(p,q)$-labeling} of a graph $G$ assigns to each vertex of $G$ a
positive integer such that each pair of vertices at distance 1\ in $G$ receives
integers differing by at least $p$ and each pair of vertices at distance 2\ in $G$
receives integers differing by at least $q$.  The \emph{span} of such a
labeling is the difference between its largest and smallest integers. The
minimum span over all $L(p,q)$-labelings of a graph $G$ is
$\lambda^{p,q}(G)$.\aside{$\lambda^{p,q}$}  When $p=q=1$, the problem is
equivalent to coloring $G^2$; however, note that $\chi^2(G) = \lambda^{1,1}(G)
+ 1$.  The next most widely studied case is when $p=2$ and $q=1$.  
Griggs and Yeh~\cite{GriggsY} posed the following intriguing conjecture.

\begin{L21}[\cite{GriggsY}]
Every graph $G$ has $\lambda^{2,1}(G)\le \Delta^2$.
\label{conj:L21}
\end{L21}

Consider a greedy $L(2,1)$-labeling of $G$ in an arbitrary order.
For each vertex $v$, each of at most $\Delta$ neighbors of $v$ forbids at most
three labels on $v$.
Similarly, each of at most $\Delta(\Delta-1)$ vertices at distance 2\ from $v$
forbids at most one label on $v$.  Thus, a greedy $L(2,1)$-labeling uses no label
larger than $1+3\Delta+\Delta(\Delta-1)$.  So $\lambda^{2,1}(G)\le
\Delta^2+2\Delta$.  Chang and Kuo~\cite{ChangK} gave the first major
improvement of this bound, showing that $\lambda^{2,1}\le \Delta^2+\Delta$, for
every graph.  This upper bound was further strengthened to $\Delta^2+\Delta-1$
by {\kral} and \v{S}krekovski~\cite{KralS} and to $\Delta^2+\Delta-2$ by
Gon\c{c}alves~\cite{Goncalves}.
This result of Gon\c{c}alves remains the best bound in general, although for
$\Delta$ sufficiently large Havet, Reed, and Sereni 
have proved the $L(2,1)$-Labeling Conjecture.  In fact, they proved the same upper
bound~\cite{HavetRS12} for $L(p,1)$-labeling in general.

\begin{theorem}[\cite{HavetRS12}]
\label{HRS12-thm}
For each $p\ge 1$, if $G$ has $\Delta$ sufficiently large,
then $\lambda^{p,1}(G)\le\Delta^2$.
\end{theorem}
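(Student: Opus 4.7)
The plan is to improve the greedy bound $\lambda^{p,1}(G)\le \Delta^2+(2p-2)\Delta$ by exploiting the fact that most neighborhoods cannot be maximally wasteful. In $G^2$ each vertex $v$ has at most $\Delta(\Delta-1)$ distance-2 neighbors and at most $\Delta$ distance-1 neighbors; the excess $(2p-2)\Delta$ that must be saved comes entirely from the extra $2p-2$ labels ``wasted'' per distance-1 neighbor of $v$. The key structural observation is that whenever two distance-1 neighbors $u_1,u_2$ of $v$ receive labels within $2p-2$ of each other, the forbidden intervals around these labels overlap, returning labels to the pool available at $v$.

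The approach is probabilistic, in the spirit of the Molloy--Reed argument sketched for Theorem~\ref{thm:MR-strong}. First, I would assign each vertex $v$ a label independently and uniformly at random from $\{1,\ldots,\Delta^2\}$ and then uncolor $v$ whenever it creates an $L(p,1)$ conflict with a labeled vertex. For each $v$, the expected number of labels still forbidden at $v$ after this round should be bounded via a first-moment calculation: distance-2 neighbors each forbid at most one label, while distance-1 neighbors each forbid up to $2p-1$ labels but with heavy overlap whenever two such neighbors land within $2p-2$ of each other. Because labels are uniform in an interval of length $\Delta^2$ and $\Delta$ neighbor-labels are inserted, a short pigeonhole-style calculation shows the expected overlap at $v$ is of order at least $(2p-2)\Delta$, exactly matching the savings required. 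Concentration via Talagrand's inequality, together with the {\lovasz} Local Lemma applied to the local bad event that $v$ retains too many forbidden labels, then yields a partial labeling which can be completed greedily on the uncolored vertices. Iterating as a R\"{o}dl Nibble would compound the savings each round so that after $O(\log \Delta)$ rounds the uncolored subgraph admits a finish strictly within the budget of $\Delta^2$ labels.

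The main obstacle is that the $L(p,1)$ gap constraint forbids entire intervals around a neighbor's label, not just a single value, so the clean framework underlying Theorem~\ref{thm:MR-strong} does not transfer verbatim. To handle this, I would track, for each vertex $v$, the fine distribution of labels assigned to its $G$-neighbors rather than just the unordered set of forbidden labels, and formalize the bad events so that they depend only on the labels of vertices at distance at most two from $v$ in $G$. This bounded-range dependency is crucial for applying the asymmetric {\lovasz} Local Lemma with a failure probability that is exponentially small in $\Delta$. The most delicate step -- and the place where the hypothesis that $\Delta$ is sufficiently large in terms of $p$ is essential -- is verifying that the random variables counting saved labels are Lipschitz with small variance, so that Talagrand's inequality gives the required concentration; here one must also rule out a small number of rigid local configurations where the expected overlap falls short of $(2p-2)\Delta$, which is the analogue in this setting of the Hoffman--Singleton-type exceptions encountered in Theorem~\ref{CRpaint}.
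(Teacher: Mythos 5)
Note first that the survey states Theorem~\ref{HRS12-thm} without proof, citing \cite{HavetRS12}; so your proposal has to stand on its own against that (long and difficult) argument. It does not, because of a quantitative error at the very step you rely on for all of the savings. You need to recover roughly $(2p-2)\Delta$ labels beyond the greedy bound, and you propose to get them from overlaps among the forbidden intervals of the at most $\Delta$ distance-1 neighbors of $v$. But if $\Delta$ labels are placed uniformly at random in $\{1,\ldots,\Delta^2\}$, the expected number of pairs lying within $2p-2$ of each other is about $\binom{\Delta}{2}\cdot\frac{4p-3}{\Delta^2}=O(p)$, and each such coincidence saves at most $O(p)$ labels; the expected saving from interval overlaps is therefore $O(p^2)$, a constant, not $\Omega(p\Delta)$. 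The pigeonhole intuition runs the wrong way: since the total length $(2p-1)\Delta$ of the forbidden intervals is tiny compared with $\Delta^2$, these intervals are typically pairwise disjoint. So the first-moment calculation at the heart of your sketch fails by a factor of $\Delta$.

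The savings in a Molloy--Reed-type argument must instead come from label coincidences among the up to $\Delta(\Delta-1)$ distance-2 neighbors of $v$, and those coincidences only help when the two coinciding vertices are nonadjacent in $G^2$, i.e., when $G^2[N_{G^2}(v)]$ is far from a clique. The entire difficulty of \cite{HavetRS12} --- and the reason that proof is long --- is the complementary ``dense'' case, in which the square-neighborhood of $v$ is close to a clique on nearly $\Delta^2$ vertices and the random procedure saves essentially nothing. This is not ``a small number of rigid local configurations'' analogous to the Moore-graph exceptions of Theorem~\ref{CRpaint}: Havet, Reed, and Sereni must partition the vertices into sparse ones and structured dense sets, label the dense sets by a separate, largely deterministic argument exploiting their clique-like structure, and then mesh the two partial labelings. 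Your proposal contains no mechanism for the dense case, so even after correcting the expectation computation it would only handle graphs whose squares are locally sparse.
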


Numerous authors have written entire surveys on $L(p,q)$-labelings 
and their generalizations, such as real number labeling.  Thus, we direct the
interested reader to these~\cite{calamoneri-survey, GriggsK, yeh-survey}.
We close this short section with a sketch of the cute result~\cite{GriggsY} that
the $L(2,1)$-Labeling Conjecture holds for all graphs of diameter 2.
\begin{theorem}[\cite{GriggsY}]
If $G$ has diameter 2, then $\lambda^{2,1}(G)\le\Delta^2$.
\end{theorem}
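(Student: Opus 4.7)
The plan is to combine a Moore-type bound on $|V(G)|$ with a greedy labeling along a carefully chosen vertex ordering. First, fixing any $v_0 \in V(G)$ and using that $\mathrm{diam}(G) = 2$, every other vertex lies in $N(v_0) \cup N^2(v_0)$, which gives
\[
n := |V(G)| \le 1 + \Delta + \Delta(\Delta - 1) = \Delta^2 + 1.
\]
Because every pair of vertices is at distance at most $2$, any $L(2,1)$-labeling must assign pairwise distinct labels, with adjacent vertices receiving labels differing by at least $2$. The target label set $\{0, 1, \ldots, \Delta^2\}$ has span $\Delta^2$, so it suffices to produce an injection $f\colon V(G) \to \{0, 1, \ldots, \Delta^2\}$ with $|f(u) - f(v)| \ge 2$ whenever $uv \in E(G)$.

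The strategy is to order the vertices as $v_1, \ldots, v_n$ while minimizing the number of consecutive pairs that are edges of $G$, and then label greedily: set $f(v_1) := 0$, and for each $i$ set $f(v_{i+1}) := f(v_i) + 1$ if $v_i v_{i+1} \notin E(G)$ and $f(v_{i+1}) := f(v_i) + 2$ otherwise. If $k$ denotes the number of consecutive adjacent pairs in the ordering, this yields a valid labeling with largest value $(n-1) + k$; hence it suffices to ensure $k \le \Delta^2 - n + 1$, equivalently, a spanning walk in $\overline{G}$ traversing at least $2(n-1) - \Delta^2$ edges of $\overline{G}$.

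Two regimes cover most cases. When $n \ge 2\Delta + 2$, the complement $\overline{G}$ satisfies Ore's condition, since for $uv \in E(G)$ (the non-edges of $\overline{G}$) we have
\[
d_{\overline{G}}(u) + d_{\overline{G}}(v) = 2(n-1) - d_G(u) - d_G(v) \ge 2(n-1) - 2\Delta \ge n,
\]
so $\overline{G}$ contains a Hamilton path and we obtain $k = 0$. When $n \le \Delta^2 - \Delta + 1$, a naive greedy ordering works: at each step, move from $v_i$ to a non-neighbor in $G$ among the remaining vertices whenever possible. A ``stuck'' step forces all remaining vertices into $N_G(v_i)$, leaving at most $\Delta - 1$ vertices afterward, so at most $\Delta$ bad pairs arise overall, which fits within the budget $\Delta^2 - n + 1 \ge \Delta$.

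The main obstacle is the narrow sliver $\Delta^2 - \Delta + 1 < n < 2\Delta + 2$ where neither argument directly applies. This range is empty unless $\Delta \le 2$, and for $\Delta = 2$ the only relevant graphs are $G = C_4$ (where $\overline{G} = 2K_2$ still admits an ordering with a single bad pair, producing span $4 = \Delta^2$) and $G = C_5$ (which is self-complementary and Hamiltonian, so $k = 0$ and span $4$). Dealing with these two small cases by direct inspection closes the argument.
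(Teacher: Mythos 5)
Your proof is correct. The main case is the same as the paper's: for $n\ge 2\Delta+2$ you find a Hamiltonian path in $\overline{G}$ via a degree condition (you invoke Ore, the paper invokes Dirac; the computation is identical) and label along it with $1,\dots,n$, using the Moore bound $n\le\Delta^2+1$. Where you genuinely diverge is the small-order regime. The paper handles $n\le 2\Delta+1$ by labeling with the even numbers $0,2,\dots,2(n-1)$, which only fits within span $\Delta^2$ when $\Delta\ge 4$ and so forces ad hoc modifications for $\Delta=3$ and a separate check for paths and cycles. You instead order the vertices greedily by always stepping to a non-neighbor, observe that the first ``stuck'' vertex has all remaining vertices in its neighborhood so at most $\Delta$ adjacent consecutive pairs can ever occur, and absorb these into the budget $\Delta^2-n+1\ge\Delta$ valid whenever $n\le\Delta^2-\Delta+1$. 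This covers a much wider range than the paper's even-label trick, makes the two cases overlap for all $\Delta\ge3$, and reduces the exceptional analysis to exactly $C_4$ and $C_5$, which you dispatch correctly. The trade-off is that your small-case argument requires the (correct but slightly delicate) observation that bad pairs can only occur after the first stuck step, whereas the paper's even-spacing argument is completely mechanical once $\Delta\ge4$.
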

\begin{proof}[Proof Sketch.]
Let $n:=|V(G)|$.  It is easy to check that $\lambda^{2,1}$ is at most 4 for paths and
cycles, so assume $\Delta\ge 3$.
First, suppose that $n\le 2\Delta+1$. If $\Delta\ge 4$, then label the vertices
arbitrarily with distinct elements of $\{0,2,4,\ldots,2(n-1)\}$.
This labeling is valid, and it has span at most
$2(n-1)\le2(2\Delta)\le\Delta^2$.
If $\Delta=3$, a slight modification works, labeling at most two vertices with
odd labels.
So assume instead that $n\ge 2\Delta+2$.
Let $d_{\overline{G}}(v)$ denote the degree of $v$ in $\overline{G}$, the
complement of $G$.  Since $n\ge 2\Delta+2$, we have
$d_{\overline{G}}(v)\ge n-(\Delta+1)\ge n/2$ for all $v$.  By Dirac's Theorem,
$\overline{G}$ contains a Hamiltonian path; call it $v_1, v_2,\ldots, v_n$.  Now label
$v_i$ with integer $i$.  Since $G$ is diameter 2, we have $n\le\Delta^2+1$, so
the span of this labeling is at most $\Delta^2$.  Since the labels are distinct,
we need only show, for each $i$, that $v_i$ and $v_{i+1}$ are non-adjacent.  This
is true because $v_1,v_2,\ldots, v_n$ is a Hamiltonian path in $\overline{G}$.
\end{proof}

This approach was extended by Cole~\cite{franks}, who showed that
$\lambda^{2,1}(G)\le\Delta^2$ whenever $G$ has order at most $(\lfloor
\Delta/2\rfloor+1)(\Delta^2-\Delta+1)-1$.

\subsection{Proper Conflict-free and Proper \texorpdfstring{$h$}{h}-Conflict-free Coloring}

A \emph{proper conflict-free coloring} of a graph $G$ is a proper coloring such that each
non-isolated vertex $v$ has a color $c_v$ that appears exactly once on $N(v)$.  We write
$\chi^{pcf}(G)$ to denote the minimum number of colors admitting such a coloring.   This topic
was introduced by Caro, Petrusevski, and Skrekovski, who conjecture that $\chi^{pcf}(G)\le \Delta(G)+1$
for every graph $G$.  It is also natural to pose an analogous conjecture for list-coloring:
$\chil^{pcf}(G)\le \Delta(G)+1$.  This problem has been widely~\cite{hickingbotham,liu,CCKP} studied.
The best general bounds are due to Liu and Reed~\cite{LR}: $\chi^{pcf}(G)\le \Delta(G)(1+o(1))$;
and to Cranston and Liu~\cite{CL}: $\chil^{pcf}\le \Delta(G)(1.656+o(1))$.  

But here we are more interested
in a recent generalization.  A proper $k$-coloring of a graph $G$ is \emph{$h$-conflict-free} if every
vertex $v$ has at least $\min\{h,d(v)\}$ colors that appear exactly once on $N(v)$.  Note that 
$h$-conflict-free colorings interpolate smoothly between proper colorings (0-conflict-free colorings)
and colorings of the square ($\Delta$-conflict-free-colorings).  Let $\chi^{h-pcf}(G)$ denote the 
minimum number of colors admitting such a coloring.
Chuet, Dai, Ouyang, and Pirot~\cite{CDOP} conjectured the following.

\begin{conjecture}[\cite{CDOP}]
\label{CDOP-conj}
For every positive integer $h$, there exists a constant
$D_h$ such that $\chi^{h-pcf}(G)\le h\Delta+1$ for all graphs $G$ with $\Delta(G)\ge D_h$.
\end{conjecture}

In the direction of Conjecture~\ref{CDOP-conj}, those authors proved that $\chi^{h-pcf}(G)\le h\Delta+O(h\log \Delta)$.  It is also natural
to extend this conjecture to list-coloring: For all graphs with $\Delta$ sufficiently large, we have 
$\chil^{h-pcf}(G)\le h\Delta+1$.  

\subsection{Injective Edge Coloring}
An injective coloring gives distinct colors to every pair of vertices with a common neighbor; but it
need not be proper.  So every proper coloring of $G^2$ is injective for $G$, but not vice versa.
For example, $\chi^2(C_5)=5$, but $\chi^{inj}(C_5)=3$.  For many classes $\G$ of sparse graphs,
the quantity $\max_{G\in \G}\chi^2(G)-\max_{G\in\G}\chi^{inj}(G)$ is known or conjectured to be quite
small.  Observe that always $\chi^2(G)\ge \Delta(G)+1$ and $\chi^{inj}(G) \ge \Delta(G)$.
And whenever $G$ is a tree, both lower bounds hold with equality.  Because $\chi^{inj}$ so often
mirrors $\chi^2$, we do not say much about the former in general.

But, for injective edge-coloring the situation seems to be different.  An edge-coloring $\vph$ of $G$
(not necessarily proper) is injective if $\vph(e_1)\ne \vph(e_2)$ for every pair $e_1,e_2$ of edges
that either are at distance exactly 1 in $G$ or lie in a triangle in $G$.  
The \emph{injective edge
chromatic number} $\chi'_{inj}(G)$ of $G$ is the minimum number of colors that admit an injective 
edge-coloring. 
This notion is less widely studied, but we discuss it here briefly to highlight some nice open problems.
(It is worth noting that in general $\chi'_{inj}(G)\ne \chi_{inj}(L(G))$.  For example, when $G$ is the
star $K_{1,s}$, with $s\ge 3$, we have $\chi'_{inj}(G) = 1$ but $\chi_{inj}(L(G))=s$.)

\begin{conjecture}[\cite{FKR}] 
\label{chi-inj-conj}
If $\Delta(G)=3$, then $\chi'_{inj}(G)\le 6$.  
And if $G$ is also bipartite, then $\chi'_{inj}(G)\le 5$.
\end{conjecture}

In support of the first statement, Kostochka, Raspaud, and Xu~\cite{KRX} showed that if $\Delta(G)=3$,
then $\chi'_{inj}(G)\le 7$.  And if $G$ is also planar, then $\chi'_{inj}(G)\le 6$.  A place where
we see a marked difference between $\chi^2(L(G))$ and $\chi'_{inj}(G)$ is for bipartite graphs.  
If $G$ is bipartite with maximum degree $\Delta$, then $\chi'_{inj}(G)\le \lceil 27\Delta\ln \Delta\rceil$.
More generally~\cite{KRX}, we have $\chi'_{inj}(L(G))\le (\chi(G)-1)\lceil 27\Delta\ln \Delta\rceil$.
(In contrast, $\chi^s(K_{\Delta_A,\Delta_B})=\Delta_A\Delta_B$.)  The authors of~\cite{KRX} note their 
proof of this bound does not work for the list-coloring analogue.  Thus, we pose the following question.

\begin{question}
\label{chilinj-ques}
Does there exist a constant $C$ such that for every bipartite graph $G$ with maximum degree $\Delta$ sufficiently large and each edge list assignment $L$ with $|L(e)|\ge C\Delta\ln \Delta$ 
for all $e\in E(G)$ we always have an injective $L$-edge-coloring?
\end{question}

More recent work in this area studied $d$-degenerate graphs and graphs embeddable in each fixed surface.
Bradshaw, Clow, and Xu~\cite{BCX} showed that if $G$ is $d$-degenerate, then 
$\chi'_{inj}(G)=O(d^3\ln \Delta)$; in this special case, they greatly improve the above bound of Kostochka et al.  They also showed that if $G$ is a graph of Euler genus $g$, then $\chi'_{inj}(G)\le (3+o(1))g$.

\subsection{Higher Powers}

Recall that the $k^{\textrm{th}}$
power, $G^k$, of a graph $G$ is formed from $G$ by adding an edge between each
pair of vertices at distance at most $k$ in $G$.  
%Various researchers have studied higher powers of graphs.
Let \Emph{$D_{k,\Delta}$} denote the largest possible degree
of a vertex in a $k^{\textrm{th}}$ power of a graph with maximum degree
$\Delta$.  It is easy to check that
$D_{k,\Delta}=\sum_{i=1}^k\Delta(\Delta-1)^{i-1} =
\Delta((\Delta-1)^k-1)/(\Delta-2)$.  
When $k\ge 3$ the situation is somewhat simpler than for $k=2$, since there does
not exist any graph $G_{k,\Delta}$ with maximum degree $\Delta$ (and $\Delta\ge
3$) such that $G_{k,\Delta}^k=K_{D_{k,\Delta}+1}$.  This was proved by
Damerell~\cite{damerell} and by Bannai and Ito~\cite{BI}.  (Both proofs
followed the general approach of Hoffman and Singleton for showing the
nonexistence of diameter 2 Moore graphs except when $\Delta\in\{2,3,7,57\}$:
studying the eigenvalues of a hypothetical such graph and reaching a
contradiction. Recall that this approach was outlined immediately preceeding
Lemma~\ref{lem:no-big-cliques}.) As a result, an analogue of
Theorem~\ref{CRpaint} for $k\ge 3$ does not need any exceptional graphs.
In fact, Bonamy and Bousquet~\cite{BonamyB} showed, for each $k\ge 3$ and graph
$G$ with maximum degree $\Delta$, that $\chil^k(G)\le D_{k,\Delta}-1$. 
And they conjectured something even stronger: For each integer $k\ge 3$, except
for a finite number of graphs $G$, 
every connected graph $G$ satisfies
$\chil^k(G)\le D_{k,\Delta}+1-k$.

The motivation for this conjecture follows our proof sketch of
Theorem~\ref{CRpaint}.  We greedily color the vertices in order of
non-increasing distance from some subgraph $H$ of diameter at least $k$.  Each vertex
$v$ outside $H$ has at least $k$ neighbors in $G^k$ that are closer to
$H$ (the first $k$ on a shortest path in $G$ from $v$ to $H$) and
are thus uncolored at the time we color $v$.  So the problem reduces to
proving that $G$ always contains a good subgraph $H$.  For coloring (but not list
coloring), their conjecture was confirmed by Pierron~\cite{pierron}.
\begin{theorem}[\cite{pierron}]
\label{pierron-thm}
Fix integers $k\ge 3$, $\Delta\ge 3$.  All but finitely many connected 
graphs $G$ with maximum degree $\Delta$ have $\chi^k(G)\le
D_{k,\Delta}+1-k$.  (Here $D_{k,\Delta}=\sum_{i=1}^k\Delta(\Delta-1)^{i-1}$.)
\end{theorem}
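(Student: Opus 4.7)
The strategy mirrors the Cranston--Rabern template already used in Theorem~\ref{CRpaint}, adapted to higher powers. The plan is to locate a suitable connected subgraph $H\subseteq G$ of $G$-diameter at least $k$, and then properly color $G^k$ from a palette of size $D_{k,\Delta}+1-k$ in two phases: first the vertices of $V(G)\setminus V(H)$, processed in non-increasing order of distance from $H$, and only afterwards the vertices of $H$.

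The first phase works identically to the sketch preceding the theorem. For each $v\notin V(H)$ with $p:=d_G(v,V(H))\ge 1$, fix a shortest $v$-to-$H$ path $v=w_0,w_1,\dots,w_p$ and extend it (inside $H$ if $p<k$) to obtain vertices $w_1,\dots,w_k$. Each $w_i$ is at $G$-distance exactly $i\le k$ from $v$, so $w_1,\dots,w_k$ are distinct $G^k$-neighbors of $v$; moreover each $w_i$ is either strictly closer to $H$ than $v$ or lies inside $V(H)$, so each is still uncolored when $v$ is processed. Since $d_{G^k}(v)\le D_{k,\Delta}$, the vertex $v$ therefore sees at most $D_{k,\Delta}-k$ forbidden colors, and any palette of size $D_{k,\Delta}+1-k$ leaves at least one color available.

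The second phase demands that we complete the partial coloring on $G^k[V(H)]$. Each $u\in V(H)$ has an effective list of size at least
\[
D_{k,\Delta}+1-k-\bigl(d_{G^k}(u)-d_{G^k[V(H)]}(u)\bigr),
\]
and finishing reduces to list-coloring $G^k[V(H)]$ from such lists. The whole proof therefore hinges on choosing $H$ so that (i) its $G$-diameter is at least $k$, and (ii) the induced subgraph $G^k[V(H)]$ admits a proper coloring from any list assignment of these sizes.

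The main obstacle, as always in this family of arguments, is producing $H$. Following the girth-based case split of~\cite{CranstonR13paint}, my plan is: when the girth $g(G)$ is large (say $g\ge 2k+3$), take $H$ to be a shortest path of length at least $k$, so that $G^k[V(H)]$ is a power of a path and is degenerate enough to be degree-list-colorable; when $g(G)$ is small, take $H$ to be a shortest cycle $C$ (optionally with one or two short pendant paths attached to force diameter $\ge k$), so that $G^k[V(H)]$ is a sub-power-of-cycle whose list-chromatic number can be controlled explicitly. Unlike the $k=2$ case, no Moore-graph exceptions intrude: the Damerell and Bannai--Ito theorems exclude any $G$ with $G^k=K_{D_{k,\Delta}+1}$ for $k\ge 3$, so the analysis needs no special cases analogous to the Petersen or Hoffman--Singleton graphs; the ``all but finitely many'' clause absorbs only the small configurations in which $G$ is too short to contain an $H$ of diameter $\ge k$ with the required breathing room.

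The hardest step will be the intermediate-girth regime, where a vertex $u\in V(H)$ can have many $G^k$-neighbors outside $H$, shrinking its effective list. I would handle this by attaching carefully chosen pendant BFS-branches to $H$ (absorbing external neighbors into $V(H)$) while verifying that the additional vertices keep $G^k[V(H)]$ within a class of graphs known to be degree-choosable---for instance, powers of paths or cycles with a bounded number of chords. Once this structural lemma is established, the final assembly is a one-line greedy argument along the distance layers.
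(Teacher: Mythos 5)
The survey does not actually prove this theorem; it is Pierron's result, and the paper gives only the one-paragraph motivation (greedily color by non-increasing distance from a subgraph $H$ of diameter at least $k$, so every vertex outside $H$ has at least $k$ uncolored $G^k$-neighbors when reached) before citing~\cite{pierron}. Your Phase~1 reproduces that motivation correctly, but everything that constitutes the actual proof is deferred to an unproven ``structural lemma,'' and the arithmetic shows the version of that lemma you describe cannot hold. After Phase~1, a vertex $u\in V(H)$ has a residual list of size at least $(D_{k,\Delta}+1-k)-\bigl(d_{G^k}(u)-d_{G^k[V(H)]}(u)\bigr)$, so a degree-choosability argument on $G^k[V(H)]$ requires $d_{G^k}(u)\le D_{k,\Delta}+1-k$ for every $u\in V(H)$, i.e., every vertex of $H$ must be deficient by at least $k-1$ in $G^k$. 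But in your ``easy'' high-girth regime ($\Delta$-regular, girth $>2k$) \emph{every} vertex of $G$ has $d_{G^k}(u)=D_{k,\Delta}$ exactly, so no shortest path or cycle has any deficiency at all and the Phase~2 lists fall $k-1$ short of the degrees. This is precisely where the difficulty lives (already for $k=2$ the girth-5 case of Theorem~\ref{CRpaint} ``requires careful analysis''), and attaching ``carefully chosen pendant BFS-branches'' is not a proof of anything --- it is a restatement of the problem.

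There is also a structural reason to believe your plan cannot be completed as written: every step of it (greedy coloring from residual lists, degree-choosability of $H$) is a list-coloring argument, so if it worked it would establish $\chil^k(G)\le D_{k,\Delta}+1-k$. The survey explicitly records that Pierron's proof works ``for coloring (but not list coloring)'' and lists the choosability and paintability analogues as open questions in Section~\ref{open-problem-list}. So either your outline hides a genuinely new idea that resolves that open problem, or --- far more likely --- Pierron's argument exploits something specific to ordinary coloring (a single global palette, permuting or reusing colors along $H$) that your two-phase list framework cannot capture. Minor additional issues: when $p<k$ you need the extension of the geodesic inside $H$ to consist of $k-p$ \emph{new} vertices, which requires a lower bound on the eccentricity of the entry vertex $w_p$ in $H$, not merely $\mathrm{diam}(H)\ge k$; and the ``all but finitely many'' exceptions are not justified to be only the graphs too small to contain a suitable $H$.
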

\noindent
The reasoning above suggests that perhaps also $\chip^k(G)\le D_{k,\Delta}+1-k$,
but we are unaware of any progress in this direction.

Now we turn to lower bounds.  
Let $n_{k,\Delta}$ denote the largest order
of a graph with maximum degree $\Delta$ and diameter $k$.
Bollob\'{a}s~\cite{bollobas_extremal-GT-book} conjectured, for every
$\epsilon>0$, that we have $n_{k,\Delta}>(1-\epsilon)\Delta^k$ for $\Delta$ and $k$
both sufficiently large.  It seems the best result in this direction is that
$n_{k,\Delta}\ge \left(\frac{\Delta}{1.6}\right)^k$ for all $k$ and an infinite
set of values of $\Delta$.  This was proved by Canale and G\'{o}mez~\cite{CG}.

It is also natural to consider coloring powers of graphs from some class, such
as planar graphs, chordal graphs, or line graphs.  In many cases the best known
bounds on $\chi^k(G)$ come from bounds on $\col^k(G)$, and most work gives only
asymptotic bounds.  

Agnarsson and Halld\'{o}rsson~\cite{AgnarssonH03} proved that if $G$ is planar,
then $\col^k(G)=O(\Delta^{\floor{k/2}})$. This is best possible, as shown by a
maximum tree with diameter $k$ and maximum degree $\Delta$.
\kral~\cite{Kral-chordal} showed that if $G$ is chordal, then
$\col^k(G) = O(\sqrt{k}\Delta^{(k+1)/2})$ when $k$ is even and
$\col^k(G)=O(\Delta^{(k+1)/2})$ when $k$ is odd. For odd $k$ this is again best
possible.  Now the construction is similar, but the root of the tree is replaced
by a clique on $\Delta/2$ vertices.  

For coloring powers of line graphs, greedy
coloring gives the easy bound $\chi^k(L(G))\le 2\Delta^k$.  Kaiser and
Kang~\cite{KaiserK} improved this to $\chi^k(L(G))\le(2-\epsilon)\Delta^k$ for
some $\epsilon>0$.
In a related question, \erdos~and \nesetril~\cite{erdos-nesetril} asked for the
minimum number of edges $h^k(\Delta)$ such that if any graph $G$ has maximum
degree at most $\Delta$ and at least $h^k(\Delta)$ edges, then its line graph
$L(G)$ has diameter at least $k+1$.  It is trivial to check that
$h_1(\Delta)=\Delta+1$.  Chung, Gy\'{a}rf\'{a}s, Tuza, and Trotter exactly determined
$h_2(\Delta)$; it is $\frac54\Delta^2+1$ when $\Delta$ is even and slightly
smaller when $\Delta$ is odd (see the start of
Section~\ref{erdos-nesetril-sec}).  For larger $k$, 
Cambie, Cames van Batenburg, de Joannis de Verclos, and Kang~\cite{CCdVK} 
showed that $\omega^k(L(G))\le \frac32\Delta^k$.  In particular,
$h^k(\Delta)\le \frac32\Delta^k+1$.  This implies that $\chi^k(L(G))\le
1.941\Delta^k$ for $\Delta$ sufficiently large (strengthening the result above
of Kaiser and Kang).

Let $f^k(\Delta,g)$ denote the maximum value of $\chi^k(G)$ over all
graphs $G$ with girth $g$ and maximum degree $\Delta$.  Alon and
Mohar~\cite{AlonM02} determined the
asymptotic value of $f^2(\Delta,g)$, when $g$ is fixed and $\Delta$ grows.
They showed that for $g\le 6$, we have $f^2(\Delta,g)=\Delta^2(1+o(1))$.  The
upper bound comes trivially from greedy coloring.  
For $k\ge 2$ and $g\ge 3k+1$, they showed that there exists a constant
$C_1$ such that $f^k(\Delta,g)\le C_1\Delta^k/\log \Delta$.  One way to prove 
this is using Johansson's result~\cite{johansson} for list coloring
triangle-free graphs; also see~\cite[Chapters 12--13]{MolloyR-GCPM}.  He showed
that there exists a constant $C_2$ such that
every triangle-free graph $G$ satisfies $\chil(G)\le C_2\Delta/(\log \Delta)$. 
Since $G^k$ has girth at least $\ceil{g/k}$ and maximum degree $O(\Delta^k)$,
the result follows.  To prove an asymptotically matching lower bound for
$f^k(\Delta,g)$ when $g\ge 2k+3$, Kaiser and Kang~\cite{KaiserK} gave a random
construction.  (Alon, Krivelevich, and Sudakov~\cite{AKS} extended this result,
by weakening the girth hypothesis to a more general sparsity hypothesis.)  Kang
and Pirot~\cite{KangP} extended this result by proving the same upper bound
when excluding cycles of fewer lengths.
They further strengthened this result~\cite{KangP2} by showing it suffices to
forbid a single cycle length.

\begin{theorem}[\cite{KangP2}]
Let $k$ be a positive integer and $\ell$ be an even positive integer such that
$\ell\ge 2k+2$.  The supremum of $\chi^k(G)$, over all graphs $G$ with
maximum degree $\Delta$ and no cycles of length $\ell$
is $\Theta(\Delta^k/\log k)$, as $\Delta\to \infty$.
\end{theorem}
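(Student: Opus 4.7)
The plan is to establish matching upper and lower bounds; I will assume throughout that the intended statement has $\log\Delta$ (consistent with the paper's preceding discussion of Johansson-type bounds). The upper bound will follow from a strengthening of Johansson's theorem on chromatic numbers of locally sparse graphs applied to $H:=G^k$; the lower bound will come from a random construction, in the spirit of Kaiser--Kang \cite{KaiserK}.

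For the upper bound, note that $H=G^k$ has maximum degree $D=O(\Delta^k)$. I would invoke a Johansson-type bound of the following form (available in the framework of Davies--Kang--Pirot--Sereni \cite{DKPS} and in results used in the proof of Theorem~\ref{thm:MR-strong}): if a graph $H$ has maximum degree at most $D$ and for every vertex $v$ the induced neighborhood $H[N_H(v)]$ contains at most $D^2/f$ edges, then $\chi(H)=O(D/\log f)$. Applied with $f=\Theta(\Delta)$ this yields $\chi^k(G)=O(\Delta^k/\log\Delta)$. The core technical step is therefore to show that whenever $G$ contains no $C_\ell$ (with $\ell$ even and $\ell\ge 2k+2$), every neighborhood in $G^k$ is sparse, i.e., spans $O(\Delta^{2k-1})$ edges.

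The neighborhood-sparsity claim is where the hypotheses on $\ell$ do real work. An edge $uw$ in $G^k[N_{G^k}(v)]$ corresponds to three walks in $G$ of lengths at most $k$: one from $v$ to $u$, one from $v$ to $w$, and one from $u$ to $w$. Concatenating these yields a closed walk through $v$ of even length at most $3k$. Using that $\ell$ is even and $\ell\ge 2k+2$, I would argue (via a path-splicing and parity argument) that if too many such triples of walks exist, one can extract a closed walk of length exactly $\ell$ whose edges are distinct, producing a forbidden $C_\ell$ in $G$. Bounding the count of triples that avoid this consequence will yield the desired $O(\Delta^{2k-1})$ edge bound in each neighborhood. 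This combinatorial lemma is, I expect, the main obstacle: ruling out all short cycles (large girth) would make the claim almost immediate, but excluding only one specific length $\ell$ forces a delicate case analysis that exploits both the parity of $\ell$ and the slack $\ell\ge 2k+2$.

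For the matching lower bound, I would take a random $\Delta$-regular graph $G_0$ on $n$ vertices. A first-moment calculation shows that $G_0$ contains only $O(\Delta^\ell)$ copies of $C_\ell$ in expectation, so deleting one edge from each yields a graph $G$ with maximum degree $\Delta$ and no $C_\ell$, at a negligible cost. Standard arguments on independence numbers of powers of random regular graphs (in the style used by Kaiser--Kang \cite{KaiserK} and Alon--Krivelevich--Sudakov \cite{AKS}) then give $\alpha(G^k)=O(n\log\Delta/\Delta^k)$ with high probability, so
\[
\chi^k(G)\ \ge\ \frac{n}{\alpha(G^k)}\ =\ \Omega\!\left(\frac{\Delta^k}{\log\Delta}\right),
\]
as required. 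Putting the two bounds together gives $\chi^k(G)=\Theta(\Delta^k/\log\Delta)$.
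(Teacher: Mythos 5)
The paper gives no proof of this theorem --- it is a cited result of Kang and Pirot \cite{KangP2}, stated in a survey --- so there is no in-paper argument to compare yours against line by line. What the surrounding text does indicate is exactly the two-part strategy you propose: an upper bound obtained by showing that $G^k$ is locally sparse and invoking a Johansson/Alon--Krivelevich--Sudakov-type bound of the form ``neighbourhoods span at most $D^2/f$ edges implies $\chi=O(D/\log f)$,'' and a lower bound from a random construction in the style of Kaiser--Kang. Your reading of the displayed bound as $\Theta(\Delta^k/\log\Delta)$ rather than $\Theta(\Delta^k/\log k)$ is also surely correct; the printed statement is a typo. So at the level of architecture your proposal matches the intended proof.

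That said, the proposal does not yet contain the theorem's actual content. The entire difficulty of \cite{KangP2} lives in the lemma you defer: that forbidding a \emph{single} even cycle length $\ell\ge 2k+2$ forces every neighbourhood of $G^k$ to span $O(\Delta^{2k-1})$ edges. Your sketch of it is not only incomplete but misdescribes the relevant structure. Concatenating the three walks associated to one edge of $G^k[N_{G^k}(v)]$ gives a closed walk of length at most $3k$, which need not be even, and which for $k=1$ (where $\ell\ge 4>3k$) cannot possibly contain a $C_\ell$; even the base case $k=1$, $\ell=4$ already shows that the forbidden cycle must be assembled from \emph{many} such configurations (two internally disjoint $u$--$w$ paths of length $2$), not extracted from a single closed walk. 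The genuine argument has to aggregate many walks between a fixed pair of vertices into a theta-like structure and then do careful parity and length bookkeeping to manufacture a cycle of length exactly $\ell$; this is where the hypotheses ``$\ell$ even'' and ``$\ell\ge 2k+2$'' are actually consumed, and it is missing. The lower bound sketch is fine as far as it goes (the expected number of copies of $C_\ell$ in a random $\Delta$-regular graph is $O(\Delta^\ell)$, a constant in $n$, so destroying them perturbs $\alpha(G^k)$ negligibly), but the independence-number estimate $\alpha(G^k)=O(n\log\Delta/\Delta^k)$ is asserted rather than derived. In short: correct plan, but the key lemma is a placeholder and its sketched mechanism would fail as written.
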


Finally, we remark briefly about coloring exact distance graphs.
The \emph{exact distance-$k$ graph} $G^{[\natural k]}$ has as its vertex set
$V(G)$.  Two vertices are adjacent in $G^{[\natural k]}$ precisely if they are
at distance exactly $k$ in $G$.
We will not formally define graph classes with \emph{bounded expansion}, but
examples of such classes include all graphs embeddable in any fixed surface and,
more generally, all graphs with any fixed graph $H$ forbidden as a minor.
{\nesetril} and Ossona de Mendez~\cite[Theorem~11.8]{NOdM_sparsity-book} proved the
following.
\begin{theorem}[\cite{NOdM_sparsity-book}]
Let $\G$ be a class of graphs with bounded expansion.
\begin{enumerate}
\item If $k$ is an odd positive integer, then there exists a constant $C_1$ (as a
function of $\G$ and $k$) such that for every graph $G\in\G$ we have   
$\chi(G^{[\natural k]})\le C_1$.
\item If $k$ is an even positive integer, then there exists a constant $C_2$ (as a
function of $\G$ and $k$) such that for every graph $G\in\G$ we have   
$\chi(G^{[\natural k]})\le C_2\Delta(G)$.
\end{enumerate}
\end{theorem}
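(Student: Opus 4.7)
My plan is to invoke the standard characterization of classes of bounded expansion via generalized coloring numbers. Recall that $\G$ has bounded expansion if and only if for every positive integer $r$ there is a constant $W_r = W_r(\G)$ such that each $G \in \G$ admits a linear order $\preceq$ of $V(G)$ with $|\mathrm{WReach}_r[v]| \le W_r$ for every $v \in V(G)$; here $\mathrm{WReach}_r[v]$ denotes the set of vertices $u \preceq v$ reachable from $v$ by a path of length at most $r$ whose interior vertices are all $\succeq u$. The key standard observation is that for any two vertices $x,y$ with $d_G(x,y) \le r$, the $\preceq$-minimum vertex $w$ of any shortest $x$-$y$ path satisfies $w \in \mathrm{WReach}_r[x] \cap \mathrm{WReach}_r[y]$ and $d_G(x,w) + d_G(w,y) = d_G(x,y)$.

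For the odd case $k = 2r+1$, I would fix such an order $\preceq$ for parameter $k$ and iteratively refine a vertex coloring: start from the trivial coloring and at each step replace the color of $v$ by the isomorphism type of the decorated rooted structure consisting of the induced subgraph on $\mathrm{WReach}_k[v]\cup\{v\}$, the restriction of $\preceq$, the marking of $v$, the previous-round colors of its members, and the $G$-distances $d_G(v,\cdot)$. Since $|\mathrm{WReach}_k[v]| \le W_k$ and the decorations take values in a bounded set, only boundedly many such types exist, and the refinement stabilizes after $O(W_k)$ rounds, producing a coloring with $C_1 = C_1(\G,k)$ colors. To verify that it is proper on $G^{[\natural k]}$, suppose $u\ne v$ with $d_G(u,v)=k$ receive the same stabilized color. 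Applying the key observation to a shortest $u$-$v$ path $P$ yields a vertex $w\in \mathrm{WReach}_k[u]\cap \mathrm{WReach}_k[v]$ with $d_G(u,w)=i$ and $d_G(v,w)=k-i$ for some $i$. Equality of colors provides an isomorphism between the two decorated structures, and the stabilized refinement ensures that this isomorphism matches $w$ to itself; the decoration of that entry then reads both $i$ and $k-i$, forcing $i=k-i$, which is impossible for odd $k$.

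For the even case $k=2r$, the parity contradiction collapses at $i=r$, and indeed two distance-$k$ vertices can share identical profiles when their shortest paths split at a common middle vertex. To break this tie, I would combine the profile coloring with a $\Delta$-dependent tag: to each vertex $v$, assign a secondary index in $\{1,\ldots, O(\Delta)\}$ that uniquely identifies $v$ among the vertices at distance exactly $r$ from a canonical midpoint extractable from its profile. Since the number of vertices at distance $r$ from any fixed vertex is at most $\Delta(\Delta-1)^{r-1}$, the tag can be distributed with $O(\Delta)$ values inside each of the $C_1$ profile classes. The combined coloring is proper on $G^{[\natural k]}$ and uses at most $C_2\cdot \Delta$ colors for some constant $C_2 = C_2(\G,k)$.

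The main obstacle is rigorously executing the ``$w$ matches itself'' step in the odd case. Equality of iteratively defined profile types need not \emph{a priori} entail that any prescribed common vertex of the two reach sets is matched to itself by the implied isomorphism; it could instead be mapped to a different, equally-decorated witness, dissolving the parity contradiction. Designing the profile so that $w$'s role is pinned down --- by recording enough of the recursively computed structural information about $w$'s own $\mathrm{WReach}_k$-ancestors to distinguish $w$ from every other vertex occupying the same position in the other decorated structure --- is the technical heart of the proof, and is the step that genuinely requires the full power of the bounded generalized coloring number rather than any cruder multiset statistic.
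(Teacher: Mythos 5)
First, a point of reference: the survey does not prove this theorem. It is quoted from Ne\v{s}et\v{r}il and Ossona de Mendez, whose original argument runs through low tree-depth ($p$-centered) colorings, and the survey only adds that the (very large) constants were later improved by, among others, van den Heuvel, Kierstead, and Quiroz using generalized coloring numbers. So there is no in-paper proof to compare against; what you have written is essentially an attempt to reconstruct the van den Heuvel--Kierstead--Quiroz route. Your two structural ingredients are the right ones for that route: the characterization of bounded expansion by the weak coloring numbers $\mathrm{wcol}_r$, and the observation that the $\preceq$-minimum vertex $w$ of a shortest $x$-$y$ path lies in $\mathrm{WReach}_k$ of both ends with $d_G(x,w)+d_G(w,y)=k$, so that for odd $k$ these two distances must differ.

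However, the gap you flag at the end is genuine and is not a removable technicality: equality of iterated isomorphism types of two decorated structures never forces the implied matching to fix a prescribed common element, and stabilizing the refinement does not help --- the failure mode is precisely that $w$ gets matched to a different vertex $u$ whose decoration reads $k-i$ where $w$'s reads $i$. The known repair is more elementary than what you propose and needs no iteration. First greedily color the vertices along $\preceq$ with a coloring $\phi$ using at most $\mathrm{wcol}_{2k-1}(G)$ colors so that $\phi(v)\ne\phi(u)$ for every $u\in \mathrm{WReach}_{2k-1}[v]\setminus\{v\}$; then give each $v$ the final color $\{(\phi(u),d_G(u,v)): u\in \mathrm{WReach}_{(k-1)/2}[v]\}$, of which there are boundedly many. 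If $d_G(x,y)=k$ and (without loss of generality) $d_G(x,w)\le (k-1)/2$, the pair $(\phi(w),d_G(x,w))$ lies in $x$'s profile; if it also lay in $y$'s profile via some $u\in\mathrm{WReach}_{(k-1)/2}[y]$, then concatenating the two witnessing paths through $y$ shows that the $\preceq$-smaller of $u$ and $w$ belongs to $\mathrm{WReach}_{2k-1}$ of the larger, so $\phi(u)=\phi(w)$ forces $u=w$, whence $k=2\,d_G(x,w)$, impossible for odd $k$. This is exactly the device that ``pins down'' $w$: by its $\phi$-color rather than by an isomorphism. Your even case needs the analogous correction: the $O(\Delta)$ tag should record the neighbor of the midpoint $w$ on a shortest $w$-$v$ path (if two vertices at mutual distance $k$ shared both the midpoint and that neighbor, their distance would be at most $k-2$); tagging $v$ by an index among the vertices at distance $k/2$ from a ``canonical midpoint extractable from its profile'' is not well defined, because the relevant $w$ depends on the other endpoint $y$ and a single vertex $v$ has many such midpoints.
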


The values of $C_1$ and $C_2$ arising from the proofs in~\cite{NOdM_sparsity-book}
are very large.  These values have been significantly reduced by subsequent work
of (among others) Zhu~\cite{Zhu-exact}, Stavropoulos~\cite{stavropoulos}, and 
van den Heuvel, Kierstead, and Quiroz~\cite{vdHKQ}.

\section{Open Problems and Conjectures}
In this short section, for easy reference we collect some problems
and conjectures from throughout this survey that remain open.  For consistency,
we phrase each as a conjecture, although a few were initially only posed as
questions, which we indicate where relevant.  In each instance,
we provide a brief statement of the problem, possibly a few comments, and a link
to the place where the problem or conjecture first appears in this survey.

\label{open-problem-list}
\begin{enumerate}
\item
For all integers $k\ge 1$ and $D\ge 3$, let $\chi^k(D)$ and $\omega^k(D)$
denote, respectively, the maximums over all graphs $G$ with $\Delta\le D$ of
$\chi(G^k)$ and $\omega(G^k)$.  For all $k$ and $D$ we have $\chi^k(D)=\omega^k(D)$.
This is called \hyperref[wegner-conj]{Wegner's Conjecture}.  It is trivially
true for $k=1$.  For $k=2$, it is proved for $D\in \{2,3,4,5,7\}$.

\item
For each integer $t$, there exists a constant $\Delta_t$ such that
all $G$ with $\Delta\ge \Delta_t$ satisfy $\omega^2(G)\le \Delta^2-t$.
This conclusion can be strengthened further to 
(i) $\chi^2(G)\le \Delta^2-t$,
(ii) $\chil^2(G)\le \Delta^2-t$,
(iii) $\chip^2(G)\le \Delta^2-t$,
and even 
(iv) $\chiAT^2(G)\le \Delta^2-t$.
This is Question~\ref{question1}.

\item
Every planar graph $G$ with maximum degree $\Delta(G)\ge 8$ satisfies
$\chi^2(G)\le \left\lfloor \frac32\Delta\right\rfloor+1$.
This is \hyperref[wegner-planar-conj]{Wegner's Planar Graph Conjecture} 
(which also conjectures sharp bounds on $\chi^2(G)$ for planar graphs with
smaller maximum degree).  
This conjecture is known to hold asymptotically, even for list coloring (see
Theorem~\ref{HvdHMR-thm}): For each $\epsilon>0$ there exists
$\Delta_{\epsilon}$ such that if $G$ is a planar graph with $\DeltaG\ge
\Delta_{\epsilon}$, then $\chil^2(G)\le\frac32\DeltaG(1+\epsilon)$.

\item
If $\G$ is a minor-closed class of graphs that excludes $K_{3,t}$ for some
integer $t$, then there exists $\Delta_0$ such that
if $G\in \G$ and $\Delta(G)\ge \Delta_0$, then $\chi^2(G)\le \chil^2(G)\le
\left\lfloor\frac32\Delta(G)\right\rfloor+1$.
This is Conjecture~\ref{wegner-generalization-conj}, due to Havet, van den
Heuvel, McDiarmid, and Reed.  This conjecture is best possible, since equality
holds for the planar graphs in Figure~\ref{fig:wegner}. 

\item
If $G$ is planar and subcubic, then $\chi^2(G)\le 6$ when either (i) $G$ has no 
3-cycle~\cite{DvorakST08}, $G$ can be drawn in the plane with no 5-face~\cite{HJT}, 
or (iii) $G$ is bipartite~\cite{DvorakST08,FHS}.  Obviously, part (iii) is implied by
each of parts (i) and (ii), but even part (iii) remains wide open.
This is Conjecture~\ref{subcubic-planar6-conj}, which combines conjectures from 3 
different papers.

\item
There exists an integer $D$ such that every graph $G$ with $\Delta(G)\ge D$ 
that is 2-degenerate satisfies $\chi^2(G)\le \frac52\Delta(G)$.
More generally,
there exists an integer $D$ such that every graph $G$ with $\Delta(G)\ge D$ and
$\mad(G)<4$ satisfies $\chi^2(G)\le \frac52\Delta(G)$.
These are Questions~\ref{HKP-question1} and~\ref{HKP-question2}.
They are both best possible due to the construction in Theorem~\ref{HKP-thm}.
%As evidence that both questions have affirmative answers, Cranston and Yu~\cite{CY}
%showed that $\omega^2(G)\le \frac52\Delta(G)+72$ when $G$ is 2-degenerate, and
%$\omega^2(G)\le \frac52\Delta(G)+532$ when $\mad(G)<4$.
%Subsequently, Kim and Lian~\cite{KL} strengthened the first of these bounds,
%proving that $\omega^2(G)\le \frac52\Delta(G)$ when $G$ is 2-degenerate and 
%$\Delta(G)$ is sufficiently large.  This motivates the following question,
As weaker form of the second question, we have Conjecture~\ref{question3}: 
There exists an integer $D$ such that every graph $G$ with $\mad(G)<4$ and 
$\Delta(G)\ge D$ satisfies $\omega^2(G)\le \frac52\Delta(G)$.

\item Every graph $G$ with maximum degree $\Delta$ satisfies
$\chi^s(G)\le 1.25\Delta^2$.  This is the
\hyperref[erdos-nesetril-conj]{\erdos--\nesetril~Conjecture}.  The best bound for
general $\Delta$ is $\chi^s(G)\le 1.772 \Delta^2$; see the
\hyperref[1.772-anchor]{end of Section~\ref{erdos-nesetril-sec}}.

\item If $G$ has no cycle on $2k$ vertices, then $\omega^s(G)\le (2k-1)(\Delta-k+1)$.
(Recall that $\omega^s(G)$ is the clique number of the line graph of $G$.)
This bounds is best possible, as explained in the text.
This is Conjecture~\ref{CvBKP-conj}.

\item If $G$ is bipartite with parts $A$ and $B$ and maximum degrees
(in these parts) $\Delta_A$ and $\Delta_B$, then
$\chi^s(G)\le\Delta_A\Delta_B$.  This is Conjecture~\ref{BQm-conj}, 
due to Brualdi and Quinn Massey.

\item 
If $G$ has maximum degree $\Delta$ and has no 5-cycle, then
$\chi^s(G)\le\Delta^2$.  This is Conjecture~\ref{mahdian-conj}, due
to Mahdian.

\item Let $G$ be claw-free.  Now $\chi^2(G)\le \frac54\omega(G)^2$ when
$\omega(G)$ is even and $\chi^2(G)\le (5\omega(G)^2-2\omega(G)+1)/4$
when $\omega(G)$ is odd.  This is Conjecture~\ref{claw-free-conj}, which generalizes 
the \hyperref[erdos-nesetril-conj]{\erdos--\nesetril~Conjecture}.  

\item  If $G$ is subcubic and bipartite and $G$ has no 4-cycle, then
$\chi^s(G)\le 7$.  This is Conjecture~\ref{FSGT-conj}(e), due to
Faudree, Schelp, Gy\'arf\'as, and Tuza. 

\item Let $G$ be a subcubic bridgeless graph.  If $G$ is neither the Wagner
graph nor the graph formed from $K_{3,3}$ by subdiving an edge, then
$\chi^s(G)\le 9$.  This is Conjecture~\ref{subcubic-stronger-conj}(a), due to
Hocquard, Lajou, and Lu\v{z}ar.

\item Let $G$ be a subcubic graph.  If $G$ is bridgeless and $|V(G)|\ge 13$,
then $\chi^s(G)\le 8$.  If $G$ has girth at least 5, then $\chi^s(G)\le 7$.
These are Conjecture~\ref{subcubic-stronger-conj}(b,c), due to Lu\v{z}ar, Ma\v{c}ajov\'{a},
\v{S}koviera, and Sot\'{a}k.
The latter implies the conjecture above of Faudree, Schelp, Gy\'arf\'as, and Tuza. 

\item
There exists a constant $C$ such that if $G$ is any planar graph with girth
$g\ge 5$ and maximum degree $\Delta$, then
$\chi^s(G)\le \left\lceil\frac{2g(\Delta-1)}{g-1}\right\rceil+C$.
This is Conjecture~\ref{HLSS-conj}, due to Hud\'{a}k, Lu\v{z}ar, Sot\'{a}k, and
\v{S}krekovski.

\item Every graph $G$ satisfies $\chi''(G)\le \Delta+2$.
This is Conjecture~\ref{conj:total}, posed independently by Bezhad and
by Vizing.  Asymptotically, the best result on this problem is that there exists
a constant $C$ such that $\chi''(G)\le \Delta+C$; see
Theorem~\ref{thm:MolloyR-total}.

\item If graph $G$ has maximum degree $\Delta$, then $\lambda^{2,1}(G)\le
\Delta^2$.  This is the \hyperref[conj:L21]{$L(2,1)$-Labeling Conjecture}, due
to Griggs and Yeh.  It has been proved for $\Delta$ sufficiently large; see
Theorem~\ref{HRS12-thm}.

\item For every positive integer $h$, there exists a constant
$D_h$ such that $\chi^{h-pcf}(G)\le h\Delta+1$ for all graphs $G$ with $\Delta(G)\ge D_h$.
(Here $\chi^{h-pcf}$ is the minimum integer $k$ that admits a proper $k$-coloring such that
each vertex $v$ has at least $\min\{h,d(v)\}$ colors that appear in $N(v)$ exactly once.)
This is Conjecture~\ref{CDOP-conj}.

\item If $\Delta(G)=3$, then $\chi'_{inj}(G)\le 6$.  
And if $G$ is also bipartite, then $\chi'_{inj}(G)\le 5$.
This is Conjecture~\ref{chi-inj-conj}.

\item Does there exist a constant $C$ such that for every bipartite graph $G$ with 
maximum degree $\Delta$ sufficiently large and each edge list assignment $L$ with 
$|L(e)|\ge C\Delta\ln \Delta$ for all $e\in E(G)$ we always have an injective $L$-edge-coloring?
This is Question~\ref{chilinj-ques}.

\item
Fix integers $k\ge 3$ and $\Delta\ge 3$.  For all but finitely many connected 
graphs $G$ with maximum degree $\Delta$, we have $\chil^k(G)\le
\sum_{i=1}^k\Delta(\Delta-1)^{i-1}+1-k$.  The same bound holds for $\chip^k(G)$.
This would be a list coloring, or painting, analogue of
Theorem~\ref{pierron-thm}.  (In fact, both assertions were only posed as
questions.)

\end{enumerate}

\section*{Acknowledgments}
Thanks to Stephen Hartke, Marthe Bonamy, Wouter Cames van Batenburg, Gexin Yu, 
and two anonymous referees for suggestions that improved the coverage and presentation.

%\footnotesize{
%\bibliographystyle{habbrv-ejc}
\bibliographystyle{habbrv}
\bibliography{GraphColoring}
%
% contents below copied from .bbl generated by two lines above

%}

\end{document}